\newtheorem{lemma}{Lemma}
\newtheorem{theorem}{Theorem}
\newtheorem{corollary}{Corollary}
\newtheorem{proposition}{Proposition}
\newtheorem{fact}{Fact}
\title{Independent Bondage Number in Graphs under Girth Constraints}
\author{
  E.G.K.M. Gamlath\thanks{University of Mississippi, University, MS 38677 (\texttt{egkmgamlath@gmail.com}).}
  \thanks{Franklin Pierce University, Rindge, NH 03461.}
  \and
  Andrew Pham\thanks{Alabama A\&M University, 4900 Meridian St N, Huntsville, AL 35811.}
  \and
  Bing Wei\footnotemark[1]
}
\begin{document}

\maketitle
\begin{abstract}
Given a finite, simple graph $G$, the independent bondage number of $G$ is the minimum size of an edge set such that its deletion results in a graph with strictly larger independent domination number than that of $G$. While the bondage number of graphs under girth constraints has been studied, very few results have yet been established for the independent bondage number. In this study, we establish upper bounds on the independent bondage number of planar graphs under given girth constraints, extending results on the bondage number by Fischermann, Rautenbach, and Volkmann and on the structures of planar graphs by Borodin and Ivanova. In particular, we identify additional structures and establish bounds on the independent bondage number for planar graphs with $\delta (G) \geq 2$ and $g(G)\geq 5$, $\delta(G)\geq 3$ and $g(G)\geq 4$, and $\delta (G) \geq 2$ and $g(G)\geq 10$.
\end{abstract}

\section{Introduction}
\label{introductiondefs}

In this paper, we exclusively consider finite, undirected, and simple planar graphs. For a given graph $G$, let $V(G)$ be the set of vertices of $G$, $E(G)$ be the set of edges, and $F(G)$ be the set of faces. For $u\in V(G)$, let $N_G(u)=N(u)$ denote the neighborhood of $u$, $N_G(u)=N[u]$ denote the closed neighborhood of $u$, and $N_G(X)=N(X)=\bigcup_{x \in X}N(x)$ for a set $X\subseteq V(G)$. The degree of a vertex $u$ is denoted by $d_G(u)=d(u)=|N(u)|$. The number of vertices incident with a face $f$ is defined as the face degree and denoted by $\ell(f)$. We use $\delta(G)$ to represent the minimum degree.

The girth of $G$ is defined as the length of the shortest cycle in $G$ and denoted by $g(G)$. If $G$ has no cycles, then the girth of the graph is considered to be infinite. A $j$-, $j^+$-, or $j^-$-vertex is defined as a vertex with degree exactly $j$, at least $j$, or at most $j$, respectively. A $j$-neighbor of a vertex $v$ is a $j$-neighbor adjacent to $v$. An $(a,b^-)$-edge is an edge $e=xy$ such that $d(x)=a$ and $d(y)\leq b$. In other words, an $(a,b^-)$-edge has weight at most $a+b$. The set of vertices in $G$ with degree exactly $i$, at least $i$, and at most $i$ are denoted by $S_i$, $S_{i^+}$, and $S_{i^-}$, respectively.

Vertices of degree $i$ with exactly $j$ 2-neighbors are denoted by $S(i,j)$, vertices of degree $i$ with at least $j$ 2-neighbors are denoted by $S(i,j^+)$, and vertices of degree at least $i$ with at least $j$ 2-neighbors are denoted by $S(i^+,j^+)$.

A dominating set of $G$ is a set $D\subseteq V(G)$ such that $D\cup N(D)=V(G)$. The domination number of $G$ is denoted by $\gamma (G)$ and is defined as the cardinality of the smallest dominating set. The bondage number $b(G)$ of a non-empty graph $G$ is the cardinality of the smallest set of edges whose removal from $G$ creates a graph with a domination number at least $\gamma(G)+1$. An independent dominating set is a set of vertices in $G$ that is both independent and dominating. The minimum cardinality among all independent dominating sets is called the independent domination number $\gamma_i(G)$. The independent bondage number of a graph $G$ is denoted by $b_i(G)$ and is defined as the minimum cardinality among all edge sets $B\subseteq G$ such that $\gamma_i(G-B)>\gamma_i(G)$.

Domination number and bondage number have been widely studied in the past few decades, resulting in various types of domination numbers and bondage numbers. In 1979, Garey and Johnson proved that determining the domination number for general graphs is an NP-complete problem. In 1983, Bauer et al. \cite{bauer1983domination} introduced the measure of the efficiency of a dominating set for the first time, named domination line-stability. It is defined as the minimum number of edges to be removed from $G$ to increase $\gamma$. In 1990, the bondage number was formally introduced by Fink et al. \cite{fink1990bondage} as a parameter for measuring the vulnerability of the interconnection network under a link failure. Results on the bondage number have been studied in respect to the degree sum of adjacent vertices by Hartnell and Rall \cite{hartnell1994bounds}, and in respect to the maximum degree of planar graphs by Kang and Yuan \cite{kang2000bondage}. The results on bondage numbers up to 2010 are presented in the survey paper by Xu \cite{xu2013bondage}.

The independent bondage number is a relatively new concept with fewer results available. In 2018, Priddy, Wang, and Wei \cite{priddy2019independent} determined the independent bondage number for several graphs and presented an upper bound for planar graphs with respect to the maximum degree. In 2021, Pham and Wei \cite{phamindependent} determined a constant upper bound for planar graphs with $\delta(G)\geq 3$. In the same paper, they constructed a class of planar graphs with $b_i(G)=6$ with $\delta(G)\geq 3$. 

In 2003, Fischermann et al. \cite{fischermann2003remarks} constructed an upper bound for connected planar graphs using girth conditions, proving that $b(G)\leq 6$ when $g(G)\geq 4$, $b(G)\leq 5$ when $g(G)\geq 5$, $b(G)\leq 4$ when $g(G)\geq 6$, and $b(G)\leq 3$ when $g(G)\geq 8$. Following their result, it is natural to see if we can determine constant upper bounds for the independent bondage number of planar graphs under girth constraints. In particular, we prove constant upper bounds for planar graphs with $\delta (G) \geq 2$ and $g(G)\geq 5$, $\delta(G)\geq 3$ and $g(G)\geq 4$, and $\delta (G) \geq 2$ and $g(G)\geq 10$. To achieve this goal, we employ the discharging method, a method well-suited for planar graphs that assigns and reallocates charge  to identify and prove the existence of certain structures.  Readers can refer to the survey paper by Cranston and West \cite{cranston2017introduction} for a better understanding of the discharging method.

This paper is organized as follows: In Section \ref{section1}, we discuss  preliminary results. Section \ref{section2} examines graphs with $\delta(G) \geq 2$ and $g(G) \geq 5$. Section \ref{section3} focuses on graphs with $\delta(G) \geq 3$ and $g(G) \geq 4$. In Section \ref{section4}, we study graphs with $\delta(G) \geq 2$ and $g(G) \geq 7$. Section \ref{section5} explores graphs with $\delta(G) \geq 2$ and $g(G) \geq 10$. Section \ref{conclusionsection} summarizes our results and discusses future work.

\section{Preliminary Results}
\label{section1}
In 1994, Hartnell and Rall \cite{hartnell1994bounds} proved an upper bound for the bondage number with respect to the degree sum of adjacent vertices, and have extended it to pairs of vertices with distance at most 2. A similar result for the independent bondage number was proved by Priddy, Wang, and Wei \cite{priddy2019independent} in 2019.
\begin{theorem}\cite{priddy2019independent}  
\label{PriddyWei}
If $G$ is a non empty graph, then\\
$$b_i(G)\leq min\{d(u)+d(v)-|N(u)\cap N(v)|-1:uv \in E(G)\}.$$

\end{theorem}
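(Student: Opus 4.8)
The plan is to establish the bound for an arbitrary fixed edge $uv\in E(G)$ and then take the minimum over all edges. Write $W=N(u)\cap N(v)$ for the common neighbours, $A=N(u)\setminus N[v]$ for the private neighbours of $u$, and observe that $|N(u)\cup N(v)|-1=d(u)+d(v)-|W|-1$, so the target quantity counts exactly $|A|+|W|+|N(v)\setminus N[u]|+1$ edges. The natural construction is to delete all $d(v)$ edges incident with $v$ together with the $|A|$ edges joining $u$ to $A$; this edge set $B$ has size precisely $d(v)+|A|=d(u)+d(v)-|W|-1$, it isolates $v$, and it leaves $N_{G-B}(u)=W$.

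Since $v$ is isolated in $G-B$, it lies in every maximal independent set, so every independent dominating set of $G-B$ is the disjoint union of $\{v\}$ with an independent dominating set of $G^{\ast}:=(G-v)-\{ux:x\in A\}$; hence $\gamma_i(G-B)=1+\gamma_i(G^{\ast})$. Thus it suffices to prove the key inequality $\gamma_i(G^{\ast})\ge\gamma_i(G)$, for then $\gamma_i(G-B)\ge\gamma_i(G)+1>\gamma_i(G)$ and $b_i(G)\le|B|$ follows. To prove the key inequality I would start from a minimum independent dominating set $J$ of $G^{\ast}$ and manufacture an independent dominating set $S$ of $G$ with $|S|\le|J|$. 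Because $u$ must be dominated in $G^{\ast}$ and its only neighbours there are $W$, either $W\cap J\ne\emptyset$ or $u\in J$. If $u\notin J$, then $J$ is already independent in $G$ (the only edges of $G$ missing from $G^{\ast}$ inside $V(G^{\ast})$ are the $u$-$A$ edges), it dominates $V(G)\setminus\{v\}$, and some $w\in W\cap J$ dominates $v$, so $S=J$ works. If $u\in J$ but $A\cap J=\emptyset$ there is again no conflict and $S=J$ works, while if $u\in J$ and $A\cap J\ne\emptyset$ one can try the swap $S=(J\setminus\{u\})\cup\{v\}$, which succeeds whenever $(N(v)\setminus N[u])\cap J=\emptyset$, since $v$ then dominates all of $W\cup\{u\}$ without creating an edge inside $S$.

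The hard part is the residual case in which $u\in J$ meets a private neighbour in $A$ and simultaneously $J$ meets $N(v)\setminus N[u]$: here both the identity embedding and the $u\mapsto v$ swap create an edge inside the set, and because $\gamma_i$ is not monotone under vertex or edge deletion, repairing independence by discarding a vertex can leave several vertices undominated, so one cannot in general keep the size at $|J|$. In fact one can build graphs for which this particular $B$ fails to raise $\gamma_i$, which shows that the fixed budget must be spent adaptively rather than by a single rigid rule. I expect to resolve this by using the freedom in the choice of edge: selecting $uv$ to minimise $d(u)+d(v)-|N(u)\cap N(v)|$, and then arguing that the obstructing configuration would force the existence of a strictly cheaper edge, contradicting minimality; alternatively, one redistributes the same number of deletions (for instance isolating $v$ alone, or isolating both endpoints in the case $N(u)\cap N(v)=\emptyset$, where this costs exactly $d(u)+d(v)-1$) so that the exchange goes through. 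Making this case analysis uniform, and identifying precisely which deletion pattern the common-neighbour term $|N(u)\cap N(v)|$ pays for, is the crux of the argument.
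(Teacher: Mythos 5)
First, a point of reference: the paper does not prove this statement at all --- it is quoted as Theorem~1 from Priddy, Wang and Wei \cite{priddy2019independent} --- so there is no in-paper proof to compare against, and your attempt has to stand on its own. As a standalone argument it contains a genuine gap, which you yourself flag: the residual case $u\in J$, $A\cap J\neq\emptyset$, $(N(v)\setminus N[u])\cap J\neq\emptyset$ is never resolved, and the two escape routes you sketch (exploiting minimality of the chosen edge, or redistributing the deletions) are left entirely speculative. Everything before that is sound: the set $B$ of size $d(v)+|A|=d(u)+d(v)-|N(u)\cap N(v)|-1$, the reduction to $\gamma_i(G^\ast)\geq\gamma_i(G)$ via the isolated vertex $v$, and the first three cases (which correctly use that $W\cap J=\emptyset$ whenever $u\in J$, since $N_{G^\ast}(u)=W$). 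But a proof that stops at ``the crux of the argument'' is not a proof, and the claim that one can build graphs where this $B$ fails to raise $\gamma_i$ is asserted without an example; if true it would mean the whole strategy, not just one case, needs replacing.

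The residual case is in fact narrower than you suggest, which is worth recording. Let $W_0=\{w\in W: N_{G^\ast}(w)\cap J=\{u\}\}$ be the common neighbours privately dominated by $u$ in $G^\ast$. If $W_0=\emptyset$, then $J\setminus\{u\}$ is already an independent dominating set of $G$ of size $|J|-1$ ($u$ is dominated by $a\in A\cap J$, $v$ by $x\in X\cap J$, and nothing else relied on $u$). If $W_0=\{w^\ast\}$ is a singleton, then $(J\setminus\{u\})\cup\{w^\ast\}$ works: it is independent in $G$ because $N_{G^\ast}(w^\ast)\cap J=\{u\}$ and $v\notin J$, and $w^\ast$ dominates both $u$ and $v$. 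In particular, in any triangle-free graph $W=\emptyset$ for every edge, so the residual case evaporates and your argument is complete --- which covers every application made in this paper ($g(G)\geq 4,5,7,10$). The only genuinely open situation is $|W_0|\geq 2$ with two nonadjacent vertices of $W_0$, where neither the swap $u\mapsto v$ (blocked by $x$) nor $u\mapsto w^\ast$ (leaves the other $W_0$-vertices undominated) succeeds; you need either a further exchange argument or a different edge set there, and until that is supplied the theorem is not proved for general graphs.
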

In 2003 by Fischermann et at. \cite{fischermann2003remarks} constructed upper bound for connected planar graph using girth conditions.
\begin{theorem} \cite{fischermann2003remarks}
\label{Fischermannthm}
For any connected planar graph $G$,
\begin{center}
    
$b(G)\leq \begin{cases}
        6, & \text{if }  g(G)\geq 4\\
        5, & \text{if }  g(G)\geq 5\\
        4, & \text{if }  g(G)\geq 6\\
        3, & \text{if }  g(G)\geq 8\\
        \end{cases}$
\end{center}
\end{theorem}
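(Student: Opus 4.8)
The plan is to combine a degree-based upper bound on $b(G)$ with discharging arguments that locate a light edge (or a light pair of vertices at distance two) in any planar graph satisfying the girth hypothesis. The starting point is the ordinary-bondage counterpart of Theorem~\ref{PriddyWei}: Fink et al.~\cite{fink1990bondage} proved $b(G) \le d(u) + d(v) - 1$ for every edge $uv$, and Hartnell and Rall~\cite{hartnell1994bounds} sharpened this to $b(G) \le d(u) + d(v) - 1 - |N(u) \cap N(v)|$ and extended it to pairs $u, v$ at distance at most two. Since $g(G) \ge 4$ forbids triangles, adjacent vertices have no common neighbor, so for an edge the bound reads $b(G) \le d(u) + d(v) - 1$. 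Consequently each of the four cases reduces to a purely structural statement: it suffices to exhibit an edge $uv$ with $d(u) + d(v) \le 7, 6, 5, 4$ when $g(G) \ge 4, 5, 6, 8$, respectively (or, equally good, a pair of $2$-vertices at distance two, which the distance-two bound handles even more cheaply).

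First I would dispose of vertices of degree $1$: if $v$ is a pendant vertex with neighbor $u$, the edge bound gives $b(G) \le d(u)$, and a short direct argument comparing $\gamma(G)$ with $\gamma(G - uv)$ settles the small cases, so one may assume $\delta(G) \ge 2$. The heart of the proof is then to establish the existence of the required light edge by discharging. Assign to each vertex $x$ the charge $d(x) - 4$ and to each face $f$ the charge $\ell(f) - 4$; by Euler's formula the total charge equals $-8$. When $g(G) \ge 4$ every face carries nonnegative charge, so all negative charge sits on $3^-$-vertices, and as the girth grows the face charges $\ell(f) - 4 \ge g(G) - 4$ become increasingly generous donors---this is precisely the mechanism that drives the target weight down from $7$ to $4$. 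I would design rules that send charge from long faces and high-degree vertices toward nearby $2$- and $3$-vertices, and show that if $G$ contained no edge (or distance-two pair) of the prescribed weight, then every vertex and every face would end with nonnegative charge, contradicting the negative total.

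The main obstacle is this discharging case analysis, which must be carried out separately for each girth regime and, more delicately, must cope with $2$-vertices whose neighbors are all of large degree. Such isolated low-degree vertices are exactly the configurations that a naive search for two adjacent light vertices misses; I expect to neutralize them using the distance-two bondage bound, since two $2$-vertices that share a common neighbor form a distance-two pair and already yield $b(G) \le 2$. Verifying that the chosen rules leave no vertex or face with positive charge in the extremal light-edge-free case---across all four girth thresholds and in the presence of these isolated $2$-vertices---is where essentially all of the work lies; these structural facts are of Borodin--Ivanova type, and once any one of the target configurations is produced the corresponding bondage bound follows immediately from the degree inequality above.
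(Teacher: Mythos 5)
This statement is imported by the paper from Fischermann, Rautenbach and Volkmann \cite{fischermann2003remarks} and is not proved there, so your proposal can only be measured against the strategy of that source. Your outline is the standard one and, as far as I can tell, essentially the one used there: combine the degree-sum bondage bounds of Fink et al.\ and Hartnell--Rall (for edges and for pairs at distance at most two) with discharging arguments that produce a light edge or a light distance-two pair under each girth hypothesis. Your two key observations --- that girth at least $4$ kills the $|N(u)\cap N(v)|$ term for adjacent vertices, and that a cluster of $2$-vertices around one high-degree neighbour is rescued by the distance-two bound --- are exactly the right levers, and they are the same levers the paper itself pulls in its Sections 4 and 5 for the independent bondage analogues.

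As a proof, however, the proposal has two genuine gaps. First, the entire technical content --- the four structural lemmas asserting that a planar graph with the given girth, $\delta\ge 2$, and no light edge must contain one of the rescue configurations --- is described but never carried out: you state the charge assignment and the intended direction of charge flow, but no discharging rules are specified and no configuration list is verified, so none of the four cases is actually established. Second, the reduction to $\delta(G)\ge 2$ is wrong as stated. For a pendant vertex $v$ with neighbour $u$, deleting the single edge $uv$ need not increase the domination number (for $P_4$ one has $\gamma(P_4)=\gamma(P_4-uv)=2$, and indeed $b(P_4)=2$), and the edge bound only yields $b(G)\le d(u)$, which is not a constant. The leaf case therefore needs its own argument (for instance, two leaves sharing a support give $b(G)=1$ via the distance-two bound, while a support vertex with a unique leaf requires separate treatment), not a one-line comparison of $\gamma(G)$ with $\gamma(G-uv)$.
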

Priddy, Wang, and Wei \cite{priddy2019independent} proved an upper bound for the independent bondage number for connected planar graphs using the maximum degree $\Delta(G)$.
\begin{theorem}\cite{priddy2019independent}
If $G$ is a connected planar graph, then 
\begin{center}
$b_i(G)\leq \Delta(G)+2.$
\end{center}
\end{theorem}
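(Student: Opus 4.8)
The plan is to obtain the bound as a direct consequence of Theorem~\ref{PriddyWei}. For any edge $uv$, inclusion--exclusion together with $u\in N(v)$, $v\in N(u)$ gives
$$d(u)+d(v)-|N(u)\cap N(v)| \;=\; |N(u)\cup N(v)| \;=\; |N[u]\cup N[v]|,$$
the number of vertices lying within distance one of the edge $uv$. Theorem~\ref{PriddyWei} therefore reads $b_i(G)\le \min_{uv\in E(G)}|N[u]\cup N[v]|-1$, so it suffices to produce a single edge $uv$ with $|N[u]\cup N[v]|\le \Delta(G)+3$. This mirrors exactly the way the Hartnell--Rall bound is combined with a planar light-edge lemma to bound the ordinary bondage number, so the whole argument runs parallel to the bondage case.

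The only planarity input is that every planar graph has a vertex of degree at most $5$, so $\delta(G)\le 5$. I would fix a vertex $v$ with $d(v)=\delta(G)$ and an arbitrary neighbour $u$. Since $\{u,v\}\subseteq N[u]\cap N[v]$,
$$|N[u]\cup N[v]| \;\le\; (d(u)+1)+\big(d(v)-1\big) \;=\; d(u)+d(v) \;\le\; \Delta(G)+d(v).$$
When $\delta(G)\le 3$ the right-hand side is at most $\Delta(G)+3$, and the theorem follows at once.

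For $\delta(G)\in\{4,5\}$ planarity has to be used more carefully. Writing $t(uv)=|N(u)\cap N(v)|$ for the number of triangles through $uv$, the target $|N[u]\cup N[v]|=d(u)+d(v)-t(uv)\le \Delta(G)+3$ holds, for an edge incident to a vertex $v$ with $d(v)\le 5$, as soon as $t(uv)\ge d(v)-3$: a degree-$4$ vertex needs an incident edge in at least one triangle, and a degree-$5$ vertex an incident edge in at least two. I would prove the structural claim that every planar graph with $\delta(G)\ge 4$ has such an edge by contradiction. If no edge $uv$ satisfied $d(u)+d(v)-t(uv)\le \Delta(G)+3$, then (using $d(u)\le\Delta(G)$ and $t(uv)\ge 0$) every edge at a degree-$4$ vertex would lie in no triangle, and every edge at a degree-$5$ vertex in at most one triangle. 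Around a degree-$5$ vertex this forces at least three of its five incident faces to have length at least four, and around a degree-$4$ vertex all four. I would convert this into a contradiction by discharging: give each vertex $x$ charge $d(x)-6$ and each face $f$ charge $2\ell(f)-6$, so that the total is $-12$ by Euler's formula, and move charge from long faces toward the low-degree vertices whose incident faces are now known to be long.

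The hard part is this final discharging step, and within it the case $\delta(G)=5$, where the constraint ``every incident edge lies in at most one triangle'' is the weakest and the face-length deficit it guarantees must be shown incompatible with a global charge of $-12$. (The parallel $\delta(G)=4$ analysis is anchored by the elementary fact that a triangle-free planar graph cannot have $\delta\ge 4$, since then $|E(G)|\ge 2|V(G)|$ while girth at least four forces $|E(G)|\le 2|V(G)|-4$; triangles must therefore exist, and the discharging shows they cannot all avoid the degree-$4$ vertices.) Once the structural claim is secured, the reduction in the first paragraph delivers $b_i(G)\le \Delta(G)+2$ immediately.
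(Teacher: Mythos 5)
The paper does not prove this statement; it is quoted from \cite{priddy2019independent} as a preliminary result, so there is no internal proof to compare against. Judged on its own, your argument is the standard one and is essentially correct: the reduction via Theorem~\ref{PriddyWei} to finding an edge with $d(u)+d(v)-|N(u)\cap N(v)|\le\Delta(G)+3$, the immediate settlement of the case $\delta(G)\le 3$ using a minimum-degree vertex, and the triangle-counting target $t(uv)\ge d(v)-3$ at a $4$- or $5$-vertex are all right. The one step you leave as a sketch, the discharging, does close, and more routinely than you suggest. Under the contradiction hypothesis, every face incident to a $4$-vertex has length at least $4$ (none of its incident edges lies in any triangle, hence in no facial triangle), and at a $5$-vertex $v$ with neighbours $u_1,\dots,u_5$ in cyclic order no two consecutive incident faces can both be triangles (otherwise $u_i$ and $u_{i+2}$ would be two distinct common neighbours of $v$ and $u_{i+1}$, so $t(vu_{i+1})\ge 2$), whence at least three of the five incident faces have length at least $4$. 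Now with charges $d(x)-6$ on vertices and $2\ell(f)-6$ on faces, let each face of length at least $4$ send $\tfrac{1}{2}$ along every incidence with a $4$-vertex and $\tfrac{1}{3}$ along every incidence with a $5$-vertex: such a face loses at most $\ell(f)/2\le 2\ell(f)-6$, each $4$-vertex gains $4\cdot\tfrac{1}{2}=2$, each $5$-vertex gains at least $3\cdot\tfrac{1}{3}=1$, and since $\delta(G)\ge 4$ no other vertex is negative; all charges become nonnegative, contradicting the total $-12$. Your aside about triangle-free planar graphs having $\delta\le 3$ is true but is not needed once the discharging is organized this way.
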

As an extension of their work, Pham and Wei \cite{phamindependent} proved the following theorem.
\begin{theorem}\cite{phamindependent}
Let $G$ be a planar graph with $\delta(G)\geq3$, then
\begin{center}
$b_i(G)\leq min\{9, \Delta(G)+2\}$
\end{center}
\end{theorem}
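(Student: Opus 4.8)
The plan is to combine the general bound in Theorem~\ref{PriddyWei} with a purely structural fact about planar graphs. Since the bound $b_i(G)\le \Delta(G)+2$ is already available, it suffices to prove $b_i(G)\le 9$; the content lies in the regime $\Delta(G)\ge 8$, since otherwise $\Delta(G)+2\le 9$ already. Rewriting the quantity in Theorem~\ref{PriddyWei} as $d(u)+d(v)-|N(u)\cap N(v)|-1=|N(u)\cup N(v)|-1$, I would reduce the whole theorem to the following claim: every planar graph $G$ with $\delta(G)\ge 3$ contains an edge $uv$ with $|N(u)\cup N(v)|\le 10$. Such an edge immediately yields $b_i(G)\le 9$ via Theorem~\ref{PriddyWei}, and together with $b_i(G)\le\Delta(G)+2$ this gives $b_i(G)\le\min\{9,\Delta(G)+2\}$.

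To prove the structural claim I would argue by contradiction and use discharging. Assume $G$ is a planar graph with $\delta(G)\ge 3$ in which every edge $uv$ satisfies $|N(u)\cup N(v)|\ge 11$, i.e.\ $d(u)+d(v)-|N(u)\cap N(v)|\ge 11$. The first step is to extract local degree constraints: if $u$ has degree $3$, $4$, or $5$, then every neighbor of $u$ has degree at least $8$, $7$, or $6$ respectively (and strictly larger when $uv$ lies in triangles). Thus every low-degree vertex is surrounded exclusively by high-degree vertices. Fix a plane embedding and assign the standard charges $ch(v)=d(v)-6$ to each vertex and $ch(f)=2\ell(f)-6$ to each face, so that $\sum_v ch(v)+\sum_f ch(f)=-12$ by Euler's formula; here only the $3$-, $4$-, and $5$-vertices carry negative charge (namely $-3,-2,-1$), while every face carries nonnegative charge.

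The heart of the argument is a set of discharging rules that move charge from high-degree vertices and from large faces to the needy low-degree vertices, after which I would verify that every vertex and face ends with nonnegative charge, contradicting the total $-12$. The neighbor-degree constraints are exactly what make this feasible: a $3$-vertex can draw $+1$ from each of its three $8^+$-neighbors, a $4$-vertex from its four $7^+$-neighbors, and a $5$-vertex from its five $6^+$-neighbors. A key supporting observation is that two low-degree vertices whose degrees sum to at most $10$ can never be adjacent (two $3$-vertices, for instance, are never adjacent, since such an edge would have $|N(u)\cup N(v)|\le 6$); hence around a high-degree vertex the faces separating consecutive low-degree neighbors cannot be triangles and therefore contribute surplus charge that can be rerouted.

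I expect the main obstacle to be the bookkeeping forced by triangles, that is, by the common-neighbor term $|N(u)\cap N(v)|$. Because triangular faces carry zero charge and because triangles relax the degree constraints, the rules must be calibrated so that a high-degree donor adjacent to many low-degree vertices is always compensated by the nonnegative, and by the non-adjacency observation in fact positive, charge of the surrounding non-triangular faces. Guaranteeing that every high-degree vertex retains nonnegative charge while simultaneously meeting the demands of all its low-degree neighbors is the delicate part, and it is precisely here that planarity, through Euler's formula and the local face structure, does the real work.
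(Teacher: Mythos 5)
Your reduction is the natural first attempt, and the first half of it is fine: rewriting the bound of Theorem~\ref{PriddyWei} as $|N(u)\cup N(v)|-1$ and observing that only the case $\Delta(G)\ge 8$ matters is exactly the right framing. The gap is that your key structural claim --- \emph{every planar graph with $\delta(G)\ge 3$ contains an edge $uv$ with $|N(u)\cup N(v)|\le 10$} --- is false, so the whole theorem cannot be reduced to Theorem~\ref{PriddyWei} alone. A counterexample is the triangulated icosahedron: insert a new vertex of degree $3$ into each face of the icosahedron and join it to the three vertices of that face. Every original vertex then has degree $10$, every new vertex has degree $3$, and the graph is a planar triangulation with $\delta=3$. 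A $(3,10)$-edge $uv$ here has exactly two common neighbors, so $|N(u)\cup N(v)|=3+10-2=11$, while a $(10,10)$-edge has four common neighbors and $|N(u)\cup N(v)|=16$. Hence the minimum of $d(u)+d(v)-|N(u)\cap N(v)|-1$ over all edges is $10$, and Theorem~\ref{PriddyWei} yields only $b_i(G)\le 10$. Your proposed discharging cannot repair this: in that graph each $10$-vertex has charge $+4$ but five $3$-neighbors each demanding $+1$, all faces are triangles with zero charge, and the global total is the required $-12$, so no redistribution scheme can make everything nonnegative. The obstruction is not bookkeeping around triangles; it is a genuine extremal configuration.

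This is precisely why the structural theorem of Pham and Wei quoted in Section~\ref{section1} contains, besides the edge-type configurations (a), (b), (d), (e) (each of which does produce an edge with $|N(u)\cup N(v)|\le 10$ and hence $b_i(G)\le 9$ via Theorem~\ref{PriddyWei}), the additional configuration (c): a $10$-vertex incident to at most one $4^+$-face and adjacent to four $3$-vertices each lying on two common $3$-faces, plus a fifth $5^-$-neighbor. For that configuration the best edge gives only $b_i(G)\le 10$, and the cited proof must instead exhibit an explicit edge set $B$ with $|B|\le 9$ whose deletion forces the independent domination number to increase --- an argument in the style of the configuration-by-configuration bondage-set constructions you see in Theorems~\ref{girth5configurations}, \ref{girth4mindegree3IBNproof}, and \ref{confgirth10} of this paper. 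To complete your proof you would need both to prove the correct structural dichotomy (edge with $|N(u)\cup N(v)|\le 10$ \emph{or} the $10$-vertex configuration) and to supply that direct bondage-set construction for the second alternative; neither step is present in your proposal.
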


To find our upper bounds under girth constraints, we use the discharging method. This discharging method is well suited for planar graphs and assigns charges to faces and vertices and reassign the charges to avoid a particular structure. Euler's formula provides three natural ways to assign charges to planar graphs.
\begin{proposition} \cite{cranston2017introduction}
\label{dischargingequations}
Let $V(G)$ and $F(G)$ be the set of vertices and faces respectively of a planar graph $G$. Denote $\ell(f)$ to be the length of a face $f$. Then the following equalities holds for $G$.

\begin{align*}
   &\sum_{\substack{v\in V(G)}}(d(v)-6) &+ \quad  &\sum_{\substack{f\in F(G)}}(2l(f)-6) & =  \quad -12  & \quad \text{    vertex charging}\\
   &\sum_{\substack{v\in V(G)}}(2d(v)-6) &+  \quad &\sum_{\substack{f\in F(G)}}(l(f)-6) & = \quad -12  & \quad \text{    face charging}\\
   &\sum_{\substack{v\in V(G)}}(d(v)-4)  &+  \quad &\sum_{\substack{f\in F(G)}}(l(f)-4)  &= \quad -8  & \quad \text{    balanced charging}
   \end{align*}

\end{proposition}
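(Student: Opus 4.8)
The plan is to derive all three identities from a single source, namely Euler's polyhedral formula $|V(G)| - |E(G)| + |F(G)| = 2$ for a connected planar graph, together with the two degree-sum (handshaking) relations $\sum_{v \in V(G)} d(v) = 2|E(G)|$ and $\sum_{f \in F(G)} \ell(f) = 2|E(G)|$. I would first record these three facts. The vertex handshaking relation is immediate since each edge contributes $1$ to the degree of each of its two endpoints. The only input needing a word of justification is the face version: traversing the boundary walks of all faces, each edge is encountered exactly twice (once from each side, with a bridge counted twice by the single face surrounding it), so the total boundary length is $2|E(G)|$.

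Then, for each of the three identities, I would expand the left-hand side by linearity and substitute the two degree sums. For vertex charging, $\sum_v (d(v)-6) + \sum_f (2\ell(f)-6) = 2|E| - 6|V| + 4|E| - 6|F| = 6|E| - 6|V| - 6|F| = -6(|V| - |E| + |F|)$, which equals $-12$ by Euler's formula. For face charging, the left side expands to $4|E| - 6|V| + 2|E| - 6|F| = 6|E| - 6|V| - 6|F| = -12$ as well. For balanced charging, $\sum_v (d(v)-4) + \sum_f (\ell(f)-4) = 2|E| - 4|V| + 2|E| - 4|F| = -4(|V| - |E| + |F|) = -8$. In every case the coefficients attached to $d(v)$ and $\ell(f)$ are chosen precisely so that the expression collapses to an integer multiple of the Euler characteristic.

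Since the computation is entirely routine once the three input facts are in hand, there is no genuine obstacle here; the one point deserving explicit mention is the connectivity hypothesis. The right-hand constants $-12$, $-12$, and $-8$ reflect the Euler characteristic value $2$, which is valid for connected planar graphs; for a graph with $c$ components one would replace $2$ by $1+c$ and rescale accordingly. I would therefore state at the outset that $G$ is taken to be connected (as is standard in the discharging arguments used throughout the paper), or else apply each identity component by component. Beyond this bookkeeping, confirming that each linear combination is the intended multiple of $|V|-|E|+|F|$ constitutes the whole of the argument.
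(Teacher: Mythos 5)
Your proposal is correct, and the computations check out: each of the three left-hand sides collapses, via the two handshaking identities $\sum_v d(v) = 2|E|$ and $\sum_f \ell(f) = 2|E|$, to $-6(|V|-|E|+|F|)$ or $-4(|V|-|E|+|F|)$, giving $-12$ and $-8$ respectively. The paper itself gives no proof of this proposition --- it is cited directly from Cranston and West --- and your derivation is the standard one found there, so there is nothing to contrast. Your remark about connectivity is a genuine (if minor) point of care: the proposition as stated in the paper says only ``planar graph,'' but Euler's formula in the form $|V|-|E|+|F|=2$ requires connectedness, and for a plane graph with $c$ components the constants become $-6(1+c)$ and $-4(1+c)$. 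Since every discharging argument in the paper is applied to a connected graph, this costs nothing, but you are right to flag it.
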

Borodin et al., \cite{borodin2019all} proved the following result for planar graphs using the discharging method.
\begin{theorem}\cite{borodin2019all}
Every planar graph with minimum degree at least 3 has at least one of the following:
\begin{enumerate}[(a).]
    \item[(a).] a 3-face incident with a $(3,10)-, (4,7)-$, or $(5,6)$-edge;
    \item[(b).] a 4-face incident either with two 3-vertices and another $5^-$- vertex or with a 3-vertex, two 4-vertices, and a froth vertex of degree at most five;
    \item [(c).]a 5-face incident with four 3-vertices and a fifth vertex of degree at most 5, where all parameters are best possible.
\end{enumerate}
\end{theorem}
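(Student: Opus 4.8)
The statement is an unavoidable-set (structural) result, so the natural route is the discharging method for which the preceding Proposition is set up; note that, unlike a colouring theorem, no minimal counterexample or reducibility is needed—one argues directly on the whole graph. The plan is: suppose for contradiction that $G$ is a plane graph with $\delta(G)\ge 3$ containing none of (a)--(c). I may assume $G$ is connected, and after the usual handling of cut-vertices and bridges that each face is bounded by a cycle so that $\ell(f)$ is well defined. I would use the vertex charging of Proposition \ref{dischargingequations}, assigning $\mu(x)=d(x)-6$ to each vertex $x$ and $\mu(f)=2\ell(f)-6$ to each face $f$, so that $\sum_x\mu(x)+\sum_f\mu(f)=-12$. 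The point of this choice is that the only elements with negative initial charge are the vertices of degree $3,4,5$ (charges $-3,-2,-1$), while every face and every vertex of degree $\ge 6$ starts non-negative; since the forbidden configurations are exactly statements about how such small vertices sit on small faces, all the scarcity has been localized onto the potential receivers.

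Next I translate the hypothesis into the local facts the discharging will consume. The absence of (a) says that on every $3$-face a small vertex is flanked by large co-vertices: a $3$-vertex on a triangle has both of its triangle-neighbors of degree $\ge 11$, a $4$-vertex has them of degree $\ge 8$, and a $5$-vertex of degree $\ge 7$. The absence of (b) restricts small vertices on a $4$-face—e.g. a $4$-face carrying two $3$-vertices has its other two vertices of degree $\ge 6$, and a $4$-face of degree pattern $(3,4,4,\cdot)$ has its last vertex of degree $\ge 6$—and the absence of (c) forbids a $5$-face with four $3$-vertices and a $5^-$ fifth vertex. I would then choose rules whose donors are precisely the non-negative elements: every $4^+$-face gives a fixed amount to each incident small vertex (a $3$-face, having charge $0$, gives nothing), and across each $3$-face its large vertices pay the incident small vertices directly. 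The constants are tuned so that, using the co-vertex bounds above, each $3$-, $4$-, $5$-vertex collects at least $3,2,1$ respectively from its incident $4^+$-faces together with its large triangle-neighbors.

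The bulk of the work, and the main obstacle, is the conservation check: showing no element is negative after discharging. For the small vertices this is a case analysis on the multiset of face-lengths around the vertex and, when triangles occur, on the degrees forced by (a); the awkward cases are small vertices lying on several $4$-faces, where (b) must be invoked repeatedly to exclude cheap neighbourhoods, and $5$-vertices, whose deficit is only $1$ but whose large co-vertices are guaranteed degree only $\ge 7$. For the donors the delicate point is that a large vertex adjacent to a small vertex on a triangle is itself forced by (a) to alternate with other large vertices around its incident triangles, so it is never asked to subsidize too many small neighbours at once; verifying that a degree-$d$ vertex never overspends across its triangles, and that a face of length $\ell$ never gives away more than $2\ell-6$, are the inequalities that must be made to close simultaneously. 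The tightest checks occur at degrees $7,8,11$—exactly the thresholds produced by (a)—which is what pins the bounds to $(3,10),(4,7),(5,6)$; if some case refuses to close with flat constants I would refine the rules (letting $6^+$-faces subsidize adjacent $3$-faces, or making a vertex's gift along an edge depend on the two faces sharing it) rather than change the charging.

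Finally, for the ``best possible'' clause I would not discharge but construct. For each parameter I would exhibit a plane graph with $\delta\ge 3$ containing none of (a)--(c) once that single bound is relaxed: a near-triangulation realizing $3$-faces whose edges are only $(3,11)$-, $(4,8)$-, or $(5,7)$-edges, and analogous fillings showing a $4$-face of type $(3,3,6^-,\cdot)$ or $(3,4,4,5^-)$ and a $5$-face with four $3$-vertices and a $5$-vertex can each be made to occur in an otherwise-free graph. These extremal examples certify that $11,8,7$ and the vertex bounds in (b) and (c) cannot be improved, and serve as the sanity check that the discharging constants chosen above are sharp rather than merely sufficient.
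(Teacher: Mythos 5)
First, a point about the comparison you are being measured against: the paper does not prove this theorem at all. It is stated as a quoted result of Borodin and Ivanova \cite{borodin2019all} and used as motivation, so there is no in-paper proof to match your approach to. Your general plan --- vertex charging with $\mu(v)=d(v)-6$ and $\mu(f)=2\ell(f)-6$ so that $3$-faces carry zero charge, translating the absence of configurations (a)--(c) into lower bounds on the degrees of co-vertices of small vertices on short faces, and then routing charge from $4^+$-faces and from the large triangle-neighbors forced by (a) --- is indeed the standard Borodin-style template for a result of this shape, and the thresholds you identify ($11$, $8$, $7$ as the forced co-degrees, with the tight checks at exactly those values) are the right places to look.

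The genuine gap is that the proposal never becomes a proof: a discharging argument \emph{is} its rules and its verification, and you supply neither. You do not state the transfer constants, you do not carry out the case analysis over the face-length multisets around $3$-, $4$-, and $5$-vertices, you do not verify that a $d$-vertex or an $\ell$-face never overspends, and you explicitly reserve the right to change the rules if ``some case refuses to close.'' That last hedge is the tell: until the constants are fixed and every case is closed, there is no guarantee that a single set of rules satisfies all the donor and receiver inequalities simultaneously, and for tight results of this kind (where the statement asserts all parameters are best possible, so there is no slack anywhere) it is entirely possible that the flat rules you describe do not close and that the necessary refinements are themselves the hard part of the argument. The same applies to the ``best possible'' clause: naming the extremal constructions you would build is not the same as exhibiting them. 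As written, this is a credible research plan for reproving the cited theorem, not a proof of it.
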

As an extension of this theorem, Pham and Wei \cite{phamindependent} proved the following theorem.
\begin{theorem}\cite{phamindependent}
Every planar graph with minimum degree at least 3 has at least one of the following:
\begin{enumerate}[(a).]
    \item[(a).] a 3-face incident with a $(4,7^-)-$, a $(3,8^-)$ edge, or a $(5,6^-)$ edge;
    \item[(b).] a $(3,9^-)$ edge shared by two 3-faces;
    \item[(c).] a 10-vertex incident to at most one $4^+$-face, and adjacent to four 3-vertices each lying on two common 3-faces and a fifth vertex of degree at most five;
    \item[(d).] a 4-face incident either with two 3-vertices and another $5^-$ vertex or with a 3-vertex, two 4-vertices, and a forth vertex of degree at most five;
    \item[(e).] a 5-face incident with four 3-vertices and a fifth vertex of degree at most 5, where all parameters are best possible.
\end{enumerate}
\end{theorem}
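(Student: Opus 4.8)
The plan is to argue by contradiction using the discharging method, taking the theorem of Borodin et al.\ (\cite{borodin2019all}) as the template and refining its rules to accommodate the lighter edge weights in~(a) together with the two new structures~(b) and~(c). Suppose $G$ is a planar graph with $\delta(G)\ge 3$ that contains none of the configurations (a)--(e); we may assume $G$ is connected and fix a plane embedding. Apply the balanced charging of Proposition~\ref{dischargingequations}, assigning each vertex $v$ the charge $\mu(v)=d(v)-4$ and each face $f$ the charge $\mu(f)=\ell(f)-4$, so that the total charge is $-8<0$. Under this scheme the only elements with negative initial charge are the $3$-vertices and the $3$-faces (each receiving $-1$), while $4$-vertices and $4$-faces are neutral and everything of size at least $5$ is a charge source. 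I would then define discharging rules that move charge from the high-degree vertices to the incident $3$-faces and to the nearby $3$-vertices, preserving the total, and show that afterwards every vertex and every face has nonnegative charge, contradicting $\sum\mu=-8$.

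For the rules, the governing idea is that forbidding (a) forces every edge of a $3$-face to be heavy: no $(3,8^-)$-, $(4,7^-)$-, or $(5,6^-)$-edge may bound a triangle, so each $3$-face is incident to vertices large enough to pay for its deficit. Concretely I would let each face $f$ with $\ell(f)=3$ pull a fixed amount from each incident vertex according to that vertex's degree, and have each $k$-vertex with $k\ge 5$ send charge both to its incident $3$-faces and to its adjacent $3$-vertices (which must themselves be fed, since they start at $-1$ and, once (a) is excluded, have only neighbors of degree $\ge 9$ on any incident triangle). The rules inherited from Borodin handle the $4$- and $5$-faces: excluding (d) and (e) guarantees that a $4$-face is not incident to too many small vertices and that a $5$-face avoids the four-$3$-vertex pattern, so these faces are either nonnegative at the start or repaired by their large incident vertices.

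The verification proceeds element by element. For faces of size $\ge 6$ and vertices of sufficiently large degree the surplus clearly covers whatever is given away, so the real work is the boundary cases. Configuration~(b), a $(3,9^-)$-edge shared by two $3$-faces, is exactly the obstruction that lets a $3$-vertex or a $9^-$-vertex sit on two triangles simultaneously and thereby drain more charge than it can recover; excluding (b) caps the number of triangles through such an edge and keeps the incident $3$-vertex solvent. Configuration~(c), the near-critical $10$-vertex adjacent to four $3$-vertices on paired $3$-faces and a $5^-$-vertex, is the tight case where a degree-$10$ vertex --- which carries surplus exactly $6$ --- is on the verge of a deficit after subsidizing all of its incident triangles and low-degree neighbors; forbidding (c) is precisely what leaves it nonnegative.

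I expect the main obstacle to be this last piece: bounding, for a $10$-vertex (and more generally for vertices of intermediate degree $5$ through $10$), the simultaneous demand from incident $3$-faces, from adjacent $3$-vertices, and from the paired-triangle structure, and showing that only the explicitly excluded patterns can overdraw it. This requires a careful local count of how many $3$-faces and $3$-neighbors a vertex of each degree can carry once (a), (b), and (d) are ruled out, and it is the point at which the refinement of Borodin's heavier $(3,10)$-, $(4,7)$-, $(5,6)$-edges into the lighter weights of (a) must be compensated exactly by the new structures (b) and (c). Once every element is shown nonnegative the contradiction with the total charge $-8$ completes the argument; the sharpness assertion would then be handled separately by exhibiting extremal planar graphs realizing each threshold.
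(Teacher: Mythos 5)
The paper does not actually prove this statement; it is quoted from \cite{phamindependent} as a preliminary result, so there is no in-paper proof to compare against. Judged on its own terms, your proposal is an outline rather than a proof, and the gap is exactly where you locate it yourself: you never state the discharging rules and never carry out the verification that every vertex and every face ends with nonnegative charge. In a discharging argument those two steps \emph{are} the proof --- choosing the initial charge and observing that the excluded configurations ``should'' make the tight cases solvent is the easy part, while the content lies in specifying how much each $k$-vertex sends to each incident $3$-face and each $3$-neighbour as a function of $k$, and then checking, degree by degree and face-length by face-length, that nothing is overdrawn. Phrases such as ``I would then define discharging rules \dots and show that afterwards every vertex and every face has nonnegative charge'' and ``I expect the main obstacle to be this last piece'' defer precisely the part that can fail.

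Two concrete holes in the sketch. First, your rules feed $3$-vertices only from high-degree neighbours: a $3$-vertex all of whose neighbours are $4$-vertices and which lies only on $6^+$-faces is excluded by none of (a)--(e), starts at charge $-1$, and has no large neighbour to draw from, so it must be fed by its incident faces --- a transfer your rule set does not contain (your faces of length $3$ only \emph{pull} charge, and you say nothing about $6^+$-faces giving any away). Second, the claim that excluding (b) and (c) ``is precisely what leaves'' the relevant vertices nonnegative is asserted rather than derived; configuration (c) singles out degree $10$, and without explicit rules one cannot check that a $9$-vertex or an $11$-vertex cannot be overdrawn by the same paired-triangle structure, nor that four $3$-neighbours (rather than three or five) is the correct threshold. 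Until the rules are written down and the full case analysis is performed, the statement remains unproved; the concluding remark about sharpness would additionally require explicit extremal constructions, which the proposal also omits.
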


Motivated by Theorem \ref{Fischermannthm}, we investigate whether similar constant upper bounds for the independent bondage number could be established under girth conditions. 
\section{Graphs with $\delta(G)\geq 2$ and $g(G)\geq 5$}
\label{section2}

%By Theorem \ref{Fischermannthm}, we were motivated to investigate whether constant upper bounds for the independent bondage number could be established under girth conditions. Using the discharging method, we constructed specific configurations for planar graphs with $g(G) \geq 5$ and $\delta(G) \geq 2$, $g(G) \geq 4$ and $\delta(G) \geq 3$, $g(G) \geq 7$ and $\delta(G) \geq 2$, and $g(G) \geq 10$ and $\delta(G) \geq 2$, respectively, in Theorems \ref{mainthm}, \ref{girth4mindegree3configurations}, \ref{confgirth7}, and \ref{girth10discharging}.

%Furthermore, we prove that $b_i(G) \leq 5$, $b_i(G) \leq 6$, $b_i(G) \leq 4$, and $b_i(G) \leq 3$ for planar graphs with $g(G) \geq 5$ and $\delta(G) \geq 2$, $g(G) \geq 4$ and $\delta(G) \geq 3$, $g(G) \geq 7$ and $\delta(G) \geq 2$, and $g(G) \geq 10$ and $\delta(G) \geq 2$, respectively, in Theorems \ref{girth5configurations}, \ref{girth4mindegree3IBNproof}, \ref{IBNgirth7}, and \ref{confgirth10}.

%%%% proof from adrew

\begin{theorem}{}
\label{mainthm}
Every planar graph $G$ with $\delta(G)\geq 2$ and $g(G)\geq 5$ has at least one of the following configurations:
\begin{enumerate}[(a)]
\item \label{girth5confa} a $(2,4^-)$- or $(3,3^-)$-edge;
\item \label{girth5confb} a vertex $v\in S(5^+,[d(v)-1]^+)$ such that its remaining neighbor is either a $4^-$-vertex or a vertex $u\in S(5^+,1^+)$; [see Figure \ref{fig:configurationbcase1} and Figure \ref{fig:configurationbcase2)} on Page $13$]
\item \label{girth5confc} a $5$-vertex sharing a common $5$-face with two $2$-neighbors; [see Figure \ref{configuration c} on Page $14$ ]
\item \label{girth5confd} a vertex $v\in S(6^+,[d(v)-1]^+)$ with a $5^-$-neighbor such that $v$ shares a common $5$-face with two $2$-neighbors;[see Figure \ref{configuration d} on Page $14$]
\item \label{girth5confe} a vertex $v\in S(5^+,[d(v)-2]^+)$ such that its remaining neighbors are both $3^-$-vertices;[see Figure \ref{configuration e} on Page $14$ ]
\item \label{girth5conff} a vertex $v\in S(5,3)$ with a $3$-neighbor $u$ and sharing a common $5$-face with a $2$-neighbor $v_1$, a $5^+$-neighbor $w$, and a $2$-vertex $w_1\in N(w)$;[see Figure \ref{configuration f} on Page $14$]
\item \label{girth5confg} a vertex $v\in S(5,3)$ sharing a common $5$-face with a $3$-neighbor $u$, a $5^+$-neighbor $w$, and a $2$-vertex $w_1\in N(w)$;[see Figure \ref{configuration g} on Page $15$]
\item \label{girth5confh} a vertex $v\in S(5,3)$ with two $5^+$-neighbors $u,w$ such that $u$ and $w$ each has a $2$-neighbor $u_1,w_1$ and $\{v,u,u_1,v_1\}$ and $\{v,w,w_1,v_2\}$ each lie on a $5$-face for $v_1,v_2\in N(v)\cap S_2$;[see Figure \ref{configuration h} on Page $15$]
\item \label{girth5confi} a vertex $v\in S(5,3)$ such that $v$ shares a common $5$-face with a $4$-neighbor $u$ and a $2$-neighbor $v_1$, and $v$ shares another $5$-face with a $5^+$-neighbor $w$, another $2$-neighbor $v_2$, and a $2$-vertex $w_1\in N(w)$;[see Figure \ref{configuration i} on Page $16$ ]
\item \label{girth5confj} a vertex $v\in S(5,3)$ with two $5^+$-neighbors $u,w$ such that $\{u,w\}$ does not share a common face, $w$ has at least one $2$-neighbor $w_1$, $u$ has at least two $2$-neighbors $u_1,u_2$, $\{v,u,u_1,v_1\}$ and $\{v,u,u_2,v_2\}$ each lie on a common $5$-face for $v_1,v_2\in N(v)\cap S_2$ and $v_1\neq v_2$, and the face incident to two $2$-neighbors of $v$ is a $6$-face containing three $2$-vertices.[see Figure \ref{configuration j} on Page $16$ ]
\end{enumerate}

\end{theorem}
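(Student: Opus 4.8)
The plan is to prove this structural theorem by the discharging method, following the template set by the cited results of Borodin--Ivanova and Pham--Wei. First I would argue by contradiction: assume $G$ is a planar graph with $\delta(G)\geq 2$ and $g(G)\geq 5$ that avoids every configuration \eqref{girth5confa}--\eqref{girth5confj}. Because $g(G)\geq 5$, every face has length at least $5$, so I expect the natural tool to be the \emph{balanced charging} identity of Proposition \ref{dischargingequations}, assigning each vertex $v$ the initial charge $d(v)-4$ and each face $f$ the charge $\ell(f)-4$; the total charge is $-8<0$. The whole argument then reduces to designing discharging rules that make every final charge nonnegative, producing the desired contradiction. (One should double-check which of the three identities gives the cleanest analysis under $g\geq 5$; the balanced version is the likely candidate since both $2$-vertices and $5$-faces receive a simple charge, but vertex charging with $d(v)-6$ and $2\ell(f)-6$ is a fallback.)

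The crux of the setup is that the only vertices with negative initial charge are $2$- and $3$-vertices (charge $-2$ and $-1$ respectively under balanced charging), while $5^+$-faces and $4^+$-vertices carry surplus. The negation of configuration \eqref{girth5confa} is the key hypothesis: no $(2,4^-)$- or $(3,3^-)$-edge means every $2$-vertex has both neighbors of degree $\geq 5$, and every $3$-vertex has all neighbors of degree $\geq 4$. This forces the poor vertices to be surrounded by rich neighbors, which is exactly what lets charge flow in. I would design rules in which high-degree vertices (especially those in the classes $S(5^+,k^+)$ and $S(5,3)$ appearing in \eqref{girth5confb}--\eqref{girth5confj}) send charge to their $2$- and $3$-neighbors, with the amount calibrated so that a $2$-vertex collects $+2$ from its two $5^+$-neighbors and a $3$-vertex collects $+1$ from its $4^+$-neighbors. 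Faces of length $\geq 6$ have ample surplus; the delicate case is the $5$-face, which carries only $+1$ and must be shared carefully, and this is why so many configurations \eqref{girth5confc}--\eqref{girth5confj} are phrased in terms of vertices meeting specific $5$-faces together with their $2$-neighbors.

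The main work, and the main obstacle, is verifying that each high-degree vertex retains nonnegative charge after donating to its needy neighbors. A vertex $v$ of degree $d$ that sends, say, $1$ to each $2$-neighbor can afford this only if it does not have too many $2$-neighbors; the threshold is exactly where the classes $S(d,d-1)$, $S(d,d-2)$, $S(5,3)$, etc.\ appear. When $v$ has so many low-degree neighbors that it would go negative, the negated configurations must be invoked to rule that situation out -- this is precisely the role of the long list \eqref{girth5confb}--\eqref{girth5confj}. Concretely, I expect to show that if $v$ is a $5^+$-vertex with $d(v)-1$ or $d(v)-2$ of its neighbors being $2$-vertices (and the remaining neighbor(s) being low-degree or themselves poor-surrounded), then avoiding \eqref{girth5confb}, \eqref{girth5confe} is contradicted; and the finer $5$-face-based configurations \eqref{girth5confc}--\eqref{girth5confj} handle the borderline cases where $v$ is a $5$-vertex in $S(5,3)$ whose survival depends on whether the incident $5$-faces can supply extra charge or must instead witness a forbidden structure. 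I anticipate the bulk of the difficulty is bookkeeping: partitioning the local neighborhood of each potentially-negative $5^+$-vertex into cases by the degrees of its neighbors and the lengths and incidences of its surrounding faces, and checking in each case either that the charge balances or that one of \eqref{girth5confb}--\eqref{girth5confj} occurs. A careful enumeration of these cases, guided by the figures referenced in the statement, should close the argument.
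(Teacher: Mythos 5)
Your overall strategy---contradiction plus balanced charging with initial charges $d(v)-4$ and $\ell(f)-4$---is exactly the paper's, and your reading of the role of configuration (\ref{girth5confa}) (forcing $2$-vertices to have only $5^+$-neighbors and $3$-vertices only $4^+$-neighbors) is correct. However, the specific calibration you propose would fail. You suggest that a $3$-vertex collect its missing charge of $1$ from its $4^+$-neighbors; but a $4$-vertex has initial charge $0$ and can afford to give nothing, so any rule sending charge from $4$-vertices to $3$-vertices immediately creates new negative vertices. The paper instead has $3$-vertices take $\tfrac{1}{3}$ from each \emph{incident face} only. Likewise, you suggest a $2$-vertex collect $1$ from each of its two $5^+$-neighbors; a vertex in $S(5,3)$ has initial charge $1$ and three $2$-neighbors, so it would end at $1-3=-2$ with no way to recover. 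The paper's rule (each $2$-vertex takes $\tfrac12$ from each neighbor \emph{and} each incident face) is the only way to keep the burden on $5^+$-vertices manageable, and even then a vertex in $S(5,3)$ or $S(6,5)$ ends at $-\tfrac12$ after the first round.

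This points to the genuine gap: your proposal contains no mechanism for rescuing those vertices. The technical heart of the paper is a second layer of discharging (its rules (R3) and (R4)) in which vertices of $S(6,5)$ and $S(5,3^+)$ recoup charge from incident $5$-faces with at most one $2$-vertex, from $6^+$-faces, and from adjacent $5^+$-vertices having no $2$-neighbors; the quantities $\tfrac{1}{12}$, $\tfrac16$, $\tfrac38$, $\tfrac{d(v)-4}{|N(v)\cap[S(6,5)\cup S(5,3^+)]|}$ are tuned so that configurations (\ref{girth5confc})--(\ref{girth5confj}) are precisely what must be excluded for the books to balance, via a lengthy case analysis on the lengths of the faces around a vertex of $S(5,3)$ and the degrees of its two non-$2$-neighbors. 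You correctly anticipate that ``bookkeeping'' around $5$-faces is the hard part, but without identifying the recipient classes $S(6,5)$ and $S(5,3^+)$ and the face-to-vertex and vertex-to-vertex transfers, the plan cannot be executed as stated; the first-round rules alone leave infinitely many vertices negative and no configuration on the list is contradicted.
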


\begin{proof}
Let $G$ be a non-trivial connected planar graph with $\delta(G)\geq 2$ and $g(G)\geq 5$. Suppose $G$ contains none of the listed configurations. We will proceed with balanced charging mentioned in Proposition \ref{dischargingequations} by assigning an initial charge of $d(v)-4$ to each vertex $v$ and $\ell(f)-4$ to each face $f$. We redistribute charge according to the following discharging rules:

\begin{enumerate}[(R1)]
\item Every $2$-vertex takes $\frac{1}{2}$ from each adjacent vertex and each incident face. 
\item Every $3$-vertex takes $\frac{1}{3}$ from each incident face. 
\item Every vertex in $S(6,5)$ takes $\frac{1}{12}$ from each incident $5$-face $f$ with $|V(f)\cap S_2|\leq 1$ and from each incident $6^+$-face, and $\frac{d(v)-4}{|N(v)\cap [S(6,5)\cup S(5,3^+)]|}$ from each adjacent $5^+$-vertex $v$ with $N(v)\cap S_2=\emptyset$. 
\item Every vertex in $S(5,3^+)$ takes $\frac{\ell(f)-4-\frac{|V(f)\cap S_2|}{2}-\frac{|V(f)\cap S_3|}{3}-\frac{|V(f)\cap S(6,5)|}{12}}{|V(f)\cap S(5,3^+)|}$ from each incident $5$-face with $|V(f)\cap S_2|\leq 1$ and from each incident $6^+$-face, and $\frac{d(v)-4}{|N(v)\cap [S(6,5)\cup S(5,3^+)]|}$ from each adjacent $5^+$-vertex $v$ with $N(v)\cap S_2=\emptyset$.
\end{enumerate}

We provide some facts and their respective proofs.

\begin{fact}
\label{girthfact1}
Every $4^-$-vertex ends with $0$ charge. 
\end{fact}

By rules $(R1)$ and $(R2)$, it is easy to see that every $3^-$-vertex ends with $0$ charge. As $G$ does not contain Configuration $(a)$, every $4$-vertex can only be adjacent to $3^+$-vertices. Thus, none of the rules apply to $4$-vertices. Hence, every $4$-vertex neither gains nor loses charge and ends with final charge $4-4=0$. \\

\begin{fact}
\label{girthfact2}
A $5^+$-vertex may possibly give charge to either a $2$-vertex or a $5^+$-vertex in $S(6,5)\cup S(5,3^+)$, but not both. In particular, when a $5^+$-vertex incident with at least one $2$-vertex, it gives charge to incident $2$-vertices and does not give charge to a $5^+$-vertex in $S(6,5)\cup S(5,3^+)$.
\end{fact}

By rules $(R3)$ and $(R4)$, a $5^+$-vertex $v$ gives charge to adjacent vertices in $S(6,5)\cup S(5,3^+)$ if and only if $N(v)\cap S_2 =\emptyset$.\\

\begin{fact}
\label{girthfact3}
Every $5$- or $6$-vertex $v$ with $|N(v)\cap S_2|\leq 2d(v)-8$ and every $7^+$-vertex end with a non-negative charge. 
\end{fact}

If $|N(v)\cap S_2|\geq 1$, then $v$ only gives charge to adjacent $2$-vertices by Fact \ref{girthfact2}. When $5\leq d(v)\leq 6$ with $|N(v)\cap S_2|\leq 2d(v)-8$, $v$ ends with final charge at least $(d(v)-4)-\frac{1}{2}(2d(v)-8)=0$. When $d(v)\geq 7$, then $v$ has at most $(d(v)-1)$ $2$-neighbors by Configuration $(b)$ and ends with final charge at least $(d(v)-4)-\frac{1}{2}(d(v)-1)=\frac{d(v)}{2}-\frac{7}{2}\geq 0$.

If $|N(v)\cap S_2|=0$, then $v$ may only give charge to adjacent vertices in $S(6,5)\cup S(5,3^+)$ by Fact \ref{girthfact2}. By choice of the distributed charge $\frac{d(v)-4}{|N(v)\cap [S(6,5)\cup S(5,3^+)]|}$ in rules $(R3)$ and $(R4)$, $v$ evenly splits its positive initial charge among all potential recipients. Hence, $v$ does not lose too much charge and ends with a non-negative final charge.\\

\begin{fact}
\label{girthfact4}
Every face $f$ contains at most $\left\lfloor\frac{\ell(f)}{2}\right\rfloor$ incident $3^-$-vertices. 
\end{fact}

As $G$ does not contain Configuration $(a)$, every $2$-vertex is adjacent only to $5^+$-vertices and every $3$-vertex is adjacent to only $4^+$-vertices. Hence, every face $f$ may only contain at most $\left\lfloor\frac{\ell(f)}{2}\right\rfloor$ incident $3^-$-vertices.\\

\begin{fact}
\label{girthfact5}
Let $f$ be a face of $G$. Then $$\ell(f)-4-\frac{|V(f)\cap S_2|}{2}-\frac{|V(f)\cap S_3|}{3}-\frac{|V(f)\cap S(6,5)|}{12}\geq 0.$$
\end{fact}

If $f$ is a $6^+$-face, it suffices to consider only our most extreme case where $|V(f)\cap S_2|+|V(f)\cap S_3|=\left\lfloor\frac{\ell(f)}{2}\right\rfloor$. It follows that $|V(f)\cap S(6,5)|\leq \left\lceil\frac{\ell(f)}{2}\right\rceil$. Hence,
\begin{align*}
\ell(f)-4-\frac{|V(f)\cap S_2|}{2}&-\frac{|V(f)\cap S_3|}{3}-\frac{|V(f)\cap S(6,5)|}{12}\\
&\geq \ell(f)-4-\frac{1}{2}\left(|V(f)\cap S_2|+|V(f)\cap S_3|\right)-\frac{|V(f)\cap S(6,5)|}{12}\\
&\geq \ell(f)-4-\frac{1}{2}\left\lfloor\frac{\ell(f)}{2}\right\rfloor-\frac{1}{12}\left\lceil\frac{\ell(f)}{2}\right\rceil\\
&\geq \frac{1}{4}
\end{align*}

when $\ell(f)\geq 6$.

Suppose $\ell(f)=5$. If $|V(f)\cap S_2|=2$, then $f$ does not contain any $3$ vertices by Configuration $(a)$. Furthermore, by Rules $(R3)$ and $(R4)$, $f$ only gives charge to its incident $2$-vertices. Hence, $$\ell(f)-4-\frac{|V(f)\cap S_2|}{2}-\frac{|V(f)\cap S_3|}{3}-\frac{|V(f)\cap S(6,5)|}{12}=(5-4)-2\left(\frac{1}{2}\right)=0.$$

If $|V(f)\cap S_2|=1$, then $|V(f)\cap S_3|\leq 1$ by Fact \ref{girthfact4}. By Configuration $(b)$, $|V(f)\cap S(6,5)|\leq 2$. In particular, either $|V(f)\cap S(6,5)|= 2$ with $|V(f)\cap S_3|=0$ or $|V(f)\cap S(6,5)|\leq 1$ with $|V(f)\cap S_3|\leq 1$. If $|V(f)\cap S(6,5)|= 2$ with $|V(f)\cap S_3|=0$, then 
$$\ell(f)-4-\frac{|V(f)\cap S_2|}{2}-\frac{|V(f)\cap S_3|}{3}-\frac{|V(f)\cap S(6,5)|}{12}= (5-4)-\frac{1}{2}-2\left(\frac{1}{12}\right)=\frac{1}{3}>0.$$ If $|V(f)\cap S(6,5)|\leq 1$ with $|V(f)\cap S_3|\leq 1$, then 
$$\ell(f)-4-\frac{|V(f)\cap S_2|}{2}-\frac{|V(f)\cap S_3|}{3}-\frac{|V(f)\cap S(6,5)|}{12}\geq (5-4)-\frac{1}{2}-\frac{1}{3}-\frac{1}{12}=\frac{1}{12}>0.$$

If $|V(f)\cap S_2|=0$, then $|V(f)\cap S(6,5)|=0$. Hence, $|V(f)\cap S_3|\leq 2$ by Fact \ref{girthfact4}. It follows that 
$$\ell(f)-4-\frac{|V(f)\cap S_2|}{2}-\frac{|V(f)\cap S_3|}{3}-\frac{|V(f)\cap S(6,5)|}{12}\geq(5-4)-\frac{2}{3}=\frac{1}{3}>0.$$
Thus, the inequality holds for all faces $f$. \\

\begin{fact}
\label{girthfact6}
Each face ends with non-negative charge. In particular, if $\ell(f)=5$ and $|V(f)\cap S_2|\leq 1$, then $f$ may distribute at least $\frac{1}{12}$ to each incident vertex $v\in V(f)\cap S(5,3^+)$. If $\ell(f)= 6$, then $f$ may distribute at least $\frac{1}{6}$ to each incident vertex $v\in V(f)\cap S(5,3^+)$. If $\ell(f)\geq 7$, then $f$ may distribute at least $\frac{3}{8}$ to each incident $v\in V(f)\cap S(5,3^+)$.
\end{fact}

By Fact \ref{girthfact5}, $\ell(f)-4-\frac{|V(f)\cap S_2|}{2}-\frac{|V(f)\cap S_3|}{3}-\frac{|V(f)\cap S(6,5)|}{12}\geq 0$ for any $6^+$-face and any $5$-face $f$ with $|V(f)\cap S_2|\leq 1$. Hence, $f$ does not lose too much charge from its incident $3^-$-vertices after applying rules $(R1)$ and $(R2)$. Applying rules $(R3)$ and $(R4)$, $f$ evenly distributes its remaining charge to all its incident vertices in $S(5,3^+)$. Thus, $f$ does not lose too much charge and ends with a non-negative final charge. 

Suppose $\ell(f)=5$ with $|V(f)\cap S_2|\leq 1$. We may assume that $|V(f)\cap S(5,3^+)|\geq 1$. By Fact \ref{girthfact4}, $|V(f)\cap S_2|+|V(f)\cap S_3|\leq 2.$ If $|V(f)\cap S_2|+|V(f)\cap S_3|=2$, then $|V(f)\cap S_3|=1$ or $|V(f)\cap S_3|=2.$ Consider that case that $|V(f)\cap S_3|=1.$ In this case either $|V(f)\cap S(5,3^+)|=2$ with $|V(f)\cap S(6,5)|=0$ or $|V(f)\cap S(5,3)|=1$ with $|V(f)\cap S(6,5)|=1.$ Otherwise, $G$ must contain configurations $(b)$ or $(g)$. When $|V(f)\cap S(5,3^+)|=2$ with $|V(f)\cap S(6,5)|=0$, $$\frac{\ell(f)-4-\frac{|V(f)\cap S_2|}{2}-\frac{|V(f)\cap S_3|}{3}-\frac{|V(f)\cap S(6,5)|}{12}}{|V(f)\cap S(5,3^+)|}\geq\frac{(5-4)-\frac{1}{2}-\frac{1}{3}}{2}=\frac{1}{12}.$$ When $|V(f)\cap S(5,3)|=1$ with $|V(f)\cap S(6,5)|=1$, $$\frac{\ell(f)-4-\frac{|V(f)\cap S_2|}{2}-\frac{|V(f)\cap S_3|}{3}-\frac{|V(f)\cap S(6,5)|}{12}}{|V(f)\cap S(5,3^+)|}\geq\frac{(5-4)-\frac{1}{2}-\frac{1}{3}-\frac{1}{12}}{1}=\frac{1}{12}.$$ For the case when $|V(f)\cap S_3|=2$, $|V(f)\cap S_2|=0.$ Note that, we avoid configurations $(b)$ and $(e)$ in $G$, $|V(f)\cap S(5,3^+)|\leq 2$ and $|V(f)\cap S(6,5)|=0$. Thus, $$\frac{\ell(f)-4-\frac{|V(f)\cap S_2|}{2}-\frac{|V(f)\cap S_3|}{3}-\frac{|V(f)\cap S(6,5)|}{12}}{|V(f)\cap S(5,3^+)|}\geq\frac{(5-4)-2*\frac{1}{3}}{1}=\frac{1}{6}.$$
If $|V(f)\cap S_2|+|V(f)\cap S_3|\leq 1$, then $0\leq |V(f)\cap S(6,5)|\leq 3$ as we assumed there exists at least one incident vertex in $S(5,3^+)$. Thus,
$$\frac{\ell(f)-4-\frac{|V(f)\cap S_2|}{2}-\frac{|V(f)\cap S_3|}{3}-\frac{|V(f)\cap S(6,5)|}{12}}{|V(f)\cap S(5,3^+)|}\geq\frac{(5-4)-\frac{1}{2}-\frac{|V(f)\cap S(6,5)|}{12}}{4-|V(f)\cap S(6,5)|}\geq\frac{1}{12}.$$

Suppose $\ell(f)\geq 6$. By Fact \ref{girthfact4}, $|V(f)\cap S_2|+|V(f)\cap S_3|\leq \left\lfloor\frac{\ell(f)}{2}\right\rfloor$. It suffices to consider the case for which $|V(f)\cap S_2|+|V(f)\cap S_3|=\left\lfloor\frac{\ell(f)}{2}\right\rfloor$.  It follows that $|V(f)\cap S(5,3^+)|+|V(f)\cap S(6,5)|\leq\left\lceil\frac{\ell(f)}{2}\right\rceil$. Consider the case where $|V(f)\cap S(5,3^+)|=\left\lceil\frac{\ell(f)}{2}\right\rceil$ and $|V(f)\cap S(6,5)|=0$. Then, 
$$\frac{\ell(f)-4-\frac{|V(f)\cap S_2|}{2}-\frac{|V(f)\cap S_3|}{3}-\frac{|V(f)\cap S(6,5)|}{12}}{|V(f)\cap S(5,3^+)|}\geq\frac{(\ell(f)-4)-\frac{1}{2}\left\lfloor\frac{\ell(f)}{2}\right\rfloor-0}{\left\lceil\frac{\ell(f)}{2}\right\rceil}.$$
In particular, when $\ell(f)=6$, $$\frac{\ell(f)-4-\frac{|V(f)\cap S_2|}{2}-\frac{|V(f)\cap S_3|}{3}-\frac{|V(f)\cap S(6,5)|}{12}}{|V(f)\cap S(5,3^+)|}\geq\frac{(\ell(f)-4)-\frac{1}{2}\left\lfloor\frac{\ell(f)}{2}\right\rfloor-0}{\left\lceil\frac{\ell(f)}{2}\right\rceil}=\frac{1}{6}$$
When $\ell(f)\geq 7$, $$\frac{\ell(f)-4-\frac{|V(f)\cap S_2|}{2}-\frac{|V(f)\cap S_3|}{3}-\frac{|V(f)\cap S(6,5)|}{12}}{|V(f)\cap S(5,3^+)|}\geq\frac{(\ell(f)-4)-\frac{1}{2}\left\lfloor\frac{\ell(f)}{2}\right\rfloor-0}{\left\lceil\frac{\ell(f)}{2}\right\rceil}\geq \frac{3}{8}.$$
Since $|V(f)\cap S(5,3^+)|+|V(f)\cap S(6,5)|\leq\left\lceil\frac{\ell(f)}{2}\right\rceil$ and vertices in $S(6,5)$ only take a charge of $\frac{1}{12}$ from $f$, then the replacement of an $S(5,3^+)$ vertex by an $S(6,5)$ vertex allows $f$ to give a charge greater than $\frac{3}{8}$ to its adjacent vertices in $S(5,3^+)$.\\ 
\begin{fact}
\label{girthfact7}
Given any $5^+$-vertex $v$ with $N(v)\cap S_2=\emptyset$, $\frac{d(v)-4}{|N(v)\cap [S(6,5)\cup S(5,3^+)]|}\geq \frac{1}{5}$.
\end{fact}

Since $v$ has at most $d(v)$ neighbors in $S(5,3^+)\cup S(6,5)$ and the function $f(x)=\frac{x-4}{x}$ is a strictly increasing function for all $x>0$, then $$\frac{d(v)-4}{|N(v)\cap [S(6,5)\cup S(5,3^+)]|}\geq\frac{d(v)-4}{d(v)}\geq  \frac{1}{5}$$ when $d(v)\geq 5$. \\

\begin{fact}
\label{girthfact8}
Every vertex $v\in S(6,5)$ ends with a non-negative final charge.
\end{fact}

The vertex $v$ only loses charge from adjacent $2$-vertices by Fact \ref{girthfact2}. After applying rule $(R1)$, $v$ has a charge of $(6-4)-5(\frac{1}{2})=-\frac{1}{2}$. It suffices to show that $v$ gains at least charge $\frac{1}{2}$ from its incident faces and from its neighbor $u\in N(v)\backslash S_2$. 

By Configuration $(b)$, $d(u)\geq 5$ and $N(u)\cap S_2=\emptyset$. If $d(u)=5$, then $v$ must be incident to at least four $6^+$-faces by Configuration $(d)$. By Fact \ref{girthfact7} and rule $(R3)$, the four incident $6^+$-faces and $u$ may contribute a total of at least charge $4\left(\frac{1}{12}\right)+\frac{1}{5}=\frac{8}{15}>\frac{1}{2}$ to $v$. 
Suppose $d(u)\geq 6$ and let $f_1$ and $f_2$ denote the faces incident to $vu$. Note that $N(u)\cap S_2 = \emptyset$ as we avoid the Configuration $(b)$. Thus, by rule $(R3)$, $u$ gives at least charge $\frac{(6-4)}{6} = \frac{1}{3}$ to $v$. If $f_i$ is a $5$-face for some $i=1,2$, then $|V(f_i)\cap S_2|=1$ by Configuration $(b)$. Hence, whether $f_i$ is a $5$- or $6^+$-face, $f_i$ is able to contribute at least charge $\frac{1}{12}$ to $v$ by rule $(R3)$. Thus $u$, $f_1$, and $f_2$ may give at least charge $\frac{1}{3}+2\left(\frac{1}{12}\right)=\frac{1}{2}$ to $v$. \\

Using Facts 1-8, we continue the main proof. As $G$ does not contain Configuration $(b)$, then $S(5,5)\cup S(6,6)=\emptyset$. By Facts \ref{girthfact1}, \ref{girthfact3}, \ref{girthfact6}, and \ref{girthfact8}, we only need to consider vertices in $S(5,3)\cup S(5,4)$. \\

Let $v\in S(5,4)$ and $u\in N(v)\backslash S_2$. By Configuration $(b)$, $u\in S(5^+,0)$. By Configuration $(c)$, $v$ must be incident to three $6^+$-faces that do not contain $u$. Each such $6^+$-face may give at least charge $\frac{1}{6}$ to $v$ by fact \ref{girthfact6}. After taking charge from the three incident $6^+$-faces, $v$ has at least charge $(5-4)-4\left(\frac{1}{2}\right)+3\left(\frac{1}{6}\right)=-\frac{1}{2}$. Let $f_1$ and $f_2$ denote the faces incident to $vu$. It suffices to show that $v$ gains at least charge $\frac{1}{2}$ from $f_1$, $f_2$, and $u$. We consider the follwoing three cases:

\textbf{Case 1: } Both $f_1$ and $f_2$ are $6^+$-faces.

As $u\in S(5^+,0)$, $u$ is able to give at least charge $\frac{1}{5}$ to $v$ by Fact \ref{girthfact7}. By Fact \ref{girthfact6}, $f_1$ and $f_2$ may give at least charge $\frac{1}{6}$ to $v$. Thus $v$ gains a charge of at least charge $\frac{1}{5}+2\left(\frac{1}{6}\right)=\frac{8}{15}>\frac{1}{2}$.  \\

\textbf{Case 2: } Either $f_1$ or $f_2$ is a $6^+$-face, but not both.

Without loss of generality, suppose $f_1$ is a $6^+$-face and $f_2$ is a $5$-face. Then $f_1$ may give at least charge $\frac{1}{6}$ to $v$. As $u\in S(5^+,0)$, then $f_2$ contains one $2$-vertex and at most one $3$-vertex. If $|V(f_2)\cap S_3|=0$, then $f_2$ may give at least charge $\frac{1}{6}$ by rule $(R4)$ and $u$ may give at least charge $\frac{1}{5}$ to $v$ by Fact \ref{girthfact7}. If $|V(f_2)\cap S_3|=1$, then $f_2$ may give at least charge $\frac{1}{12}$ and $u$ may give at least charge $\frac{1}{4}$ to $v$ by rule $(R4)$. In both subcases, $v$ receives at least charge $\frac{1}{2}$ from $f_1$, $f_2$, and $u$.\\

\textbf{Case 3: } Both $f_1$ and $f_2$ are $5$-faces.

As $u\in S(5^+,0)$, then $f_1$ and $f_2$ each contain one $2$-vertex and at most one $3$-vertex. If $|V(f_1)\cap S_3|=|V(f_2)\cap S_3|=0$, then $f_1$ and $f_2$ may each give at least charge $\frac{1}{6}$ by rule $(R4)$ as $|V(f_1) \cap S(5,3^+)|\leq 3$ and $|V(f_2)\cap S(5,3^+)|\leq 3$, and $u$ may give at least charge $\frac{1}{5}$ to $v$ by Fact \ref{girthfact7}. If $|V(f_1)\cap S_3|=|V(f_2)\cap S_3|=1$, then $f_1$ and $f_2$ may each give at least charge $\frac{1}{12}$ by rule $(R4)$ as $|V(f_1) \cap S(5,3^+)|\leq 2$ and $|V(f_2)\cap S(5,3^+)|\leq 2$, and $u$ may give at least charge $\frac{1}{3}$ to $v$ by rule $(R4)$ as $|N(u)\cap[S(6,5)\cup S(5,3^+]|\leq d(u)-2$ and $\frac{(d(u)-4)}{(d(u)-2)}\geq \frac{1}{3}$. If $|V(f_1)\cap S_3|=0$ and $|V(f_2)\cap S_3|=1$, then $f_1$ may give at least charge $\frac{1}{6}$, $f_2$ may give at least charge $\frac{1}{12}$, and $u$ may give at least charge $\frac{1}{4}$ to $v$ by rule $(R4)$. In all sub-cases, $v$ receives at least charge $\frac{1}{2}$ from $f_1$, $f_2$, and $u$.\\

Thus, every vertex $v\in S(5,4)$ ends with a nonnegative charge. For $v\in S(5,3)$, we show the following fact.

\begin{fact}
\label{girthfact9}
If $v\in S(5,3)$ is on at least three incident $6^+$-faces, then $v$ ends with a non-negative final charge. 
\end{fact}

For each $v\in S(5,3)$, $v$ only loses charge to adjacent $2$-vertices by Fact \ref{girthfact2}. After applying rule $(R1)$,  $v$ has a charge of $(5-4)-\frac{1}{2}(3)=-\frac{1}{2}$.  If $v$ is incident to at least three $6^+$-faces, then $v$ receives at least charge $(3)\frac{1}{6}=\frac{1}{2}$ from its incident $6^+$-faces by Fact \ref{girthfact6}. Hence, $v$ ends with a non-negative final charge. \\

Using our new Fact \ref{girthfact9}, we proceed with the main proof. Let $v\in S(5,3)$ and $u_1,u_2\in N(v)\backslash S_2$. We will consider two cases.

\textbf{Case 1: } $u_1$ and $u_2$ share a common face incident to $v$.

By Configuration $(c)$, the two incident faces each containing $2$-neighbors of $v$ must be $6^+$-faces. If $v$ has a third incident $6^+$-face, then we are done by Fact \ref{girthfact9}. Assume that $v$ has three incident $5$-faces. After receiving charge from the two incident $6^+$-faces, $v$ has at least charge $(5-4)-3\left(\frac{1}{2}\right)+2\left(\frac{1}{6}\right)=-\frac{1}{6}$. Thus, it suffices to show that $v$ gains at least charge $\frac{1}{6}$ from its remaining incident faces and neighbors.

Let $f_1$, $f_2$, and $f_3$ denote the $5$-faces incident to $v$ such that $u_i\in V(f_{i})\cap V(f_{i+1})$ for $1\leq i \leq 2$. If $d(u_i)\leq 4$ for $1\leq i \leq 2$, then $f_1$ and $f_3$ each has at most one incident $2$-vertex by Configuration $(a)$. By Fact \ref{girthfact6}, $f_1$ and $f_3$ may contribute a total of at least charge $2\left(\frac{1}{12}\right)=\frac{1}{6}$ to $v$. Hence, assume $d(u_1)\geq 5$. If $N(u_1)\cap S_2 = \emptyset$, then $u_1$ is able to contribute at least charge $\frac{1}{5}>\frac{1}{6}$ to $v$ by fact \ref{girthfact7}. Thus, assume $N(u_1)\cap S_2 \neq \emptyset$. By symmetry, we may assume $d(u_2)\geq 5$ and $N(u_2)\cap S_2 \neq \emptyset$ as well. It follows that $f_2$ contains at most one incident $2$-vertex and may give at least charge $\frac{1}{8}$ to $v$ by $(R4)$. By Configuration $(h)$, either $f_1$ and $f_3$ must contain exactly one incident $2$-vertex, and hence may give at least charge $\frac{1}{12}$ to $v$ by Fact \ref{girthfact6}. Therefore, $v$ receives at least charge $\frac{1}{8}+\frac{1}{12}=\frac{5}{24}>\frac{1}{6}$.\\

\textbf{Case 2: } $u_1$ and $u_2$ does not share a common face incident to $v$.

By Configuration $(c)$, the face containing two $2$-neighbors of $v$ must be a $6^+$-face. After receiving charge from its incident $6^+$-face, $v$ has at least charge $(5-4)-3\left(\frac{1}{2}\right)+\frac{1}{6}=-\frac{1}{3}$. Thus, it suffices to show that $v$ gains at least charge $\frac{1}{3}$ from its remaining incident faces and neighbors. If the face $f$ is incident to $v$ and two of its $2$-neighbors is a $7^+$-face, then $f$ may give at least charge $\frac{3}{8}>\frac{1}{3}$ to $v$ by Fact \ref{girthfact6}. Thus assume $f$ is a $6$-face. 

Let $f_1$, $f_2$, $f_3$, and $f_4$ denote the faces incident to $v$ such that $u_i\in V(f_{2i-1})\cap V(f_{2i})$ for $1\leq i \leq 2$ such that $f_2$ and $f_3$ are incident, and $x\in [N(v)\cap S_2]\cap [V(f_2)\cap V(f_3)]$. By Fact \ref{girthfact9}, if $v$ has two additional $6^+$-faces, then we are done. Thus, assume $v$ has at least three incident $5$-faces. By symmetry, we may assume $f_1$ and $f_3$ are $5$-faces. By Configuration $(e)$, $d(u_1)+d(u_2)\geq 7$. We consider the following subcases. 

\textbf{Case 2.1: } $d(u_1)\leq 4$ and $d(u_2)\leq 4$. 

By Configuration $(a)$, every $5$-face incident to $v$ must contain at most one incident $2$-vertex. Hence, $f_1$ and $f_3$ are able to each contribute at least charge $\frac{1}{12}$ to $v$ by Fact \ref{girthfact6}. If either $f_2$ or $f_4$ is a $6^+$-face, then $v$ receives at least charge $2\left(\frac{1}{12}\right)+\frac{1}{6}=\frac{1}{3}$ from its remaining incident faces. If both $f_2$ and $f_4$ are $5$-faces, then $v$ receives at least charge $4\left(\frac{1}{12}\right)=\frac{1}{3}$ from its incident $5$-faces by Fact \ref{girthfact6}.

\textbf{Case 2.2: } Either $d(u_1)\geq 5$ or $d(u_2)\geq 5$.

Without loss of generality, assume $d(u_1)\geq 5$. If $N(u_1)\cap S_2 =\emptyset$, then $u_1$ is able to contribute at least charge $\frac{1}{5}$ to $v$ by Fact \ref{girthfact7}. Furthermore, it follows that $f_1$ contains at most one incident $2$-vertex and can contribute at least charge $\frac{1}{12}$ to $v$ by Fact \ref{girthfact6}. Similarly, if $f_2$ is a $5$-face, then $f_2$ may also contribute at least charge $\frac{1}{12}$ to $v$. If $f_2$ is a $6^+$-face, then $f_2$ may contribute $\frac{1}{6}>\frac{1}{12}$ to $v$ by Fact \ref{girthfact6}. Thus, $v$ receives at least charge $\frac{1}{5}+2\left(\frac{1}{12}\right)=\frac{11}{30}>\frac{1}{3}$ from $f_1$, $f_2$, and $u_1$. Hence, assume $N(u_1)\cap S_2 \neq \emptyset$.

Suppose $3\leq d(u_2)\leq 4$. Then $|V(f_4)\cap S_2|= 1$ by Configuration $(a)$. Furthermore by Configurations $(f)$ and $(i)$, $|V(f_1)\cap S_2|=1$. Thus, $f_1$ and $f_3$ are each able to contribute charge $\frac{1}{12}$ to $v$. If either $f_2$ or $f_4$ is a $6^+$-face, then $v$ receives at least charge $2\left(\frac{1}{12}\right)+\frac{1}{6}=\frac{1}{3}$ from all $f_i$ for $1\leq i \leq 4$. Hence, assume $\ell(f_2)=\ell(f_4)=5$. By an analogous argument from $f_1$ and $f_3$ respectively, $f_2$ and $f_4$ are each able to contribute charge $\frac{1}{12}$ to $v$. Thus, $v$ receives at least charge $4\left(\frac{1}{12}\right)=\frac{1}{3}$ from all $f_i$ for $i\in \{1,2,3,4\}.$

Suppose $d(u_2)\geq 5$. By an analogous argument to that for $u_1$, we may assume $N(u_2)\cap S_2 \neq\emptyset$. If $v$ has no incident $5$-faces that contain two $2$-vertices, then $v$ receives at least charge $4\left(\frac{1}{12}\right)=\frac{1}{3}$ from all $f_i$. By Configuration $(h)$, $v$ may only have at most two incident $5$-faces that each contain two $2$-vertices. In particular, if  either $f_1$ or $f_2$ is a $5$-face with two incident $2$-vertices, then $f_3$ and $f_4$ each contain exactly one incident $2$-vertex. Similiarly, if  either $f_3$ or $f_4$ is a $5$-face with two incident $2$-vertices, then $f_1$ and $f_2$ each contain exactly one incident $2$-vertex. Consider two subcases:

\textbf{Case 2.2.1: } $|V(f_1)\cap S_2|=2$.

Then $|V(f_3)\cap S_2|=1$. Furthermore, there exists a vertex $y\in [V(f)\cap V(f_1)]\backslash[S_2\cup \{v\}]$, where $f$ is the $6$-face containing the two $2$-neighbors of $v$. Then $d(y)\geq 6$ by Configuration $(c)$. 

If $y\in S(6,5)$ and $|V(f)\cap S_2|=2$, then $f$ contains no $3$-vertices and at most two $S(5,3^+)$ vertices by Configuration $(b)$. Thus, $f$ may give at least charge $\frac{(6-4)-2\left(\frac{1}{2}\right)-\frac{1}{12}}{2}=\frac{11}{24}$ to $v$ (by rule $(R4)$) instead of its previous $\frac{1}{6}$. Hence, $v$ ends with final charge at least $(5-4)-3\left(\frac{1}{2}\right)+\frac{11}{24}+\frac{1}{12}=\frac{1}{24}>0$ after receiving charge from $f$ and $f_3$. 

If $y \in S(6,5)$ and $|V(f)\cap S_2|=3$, then $f$ may give at least charge $\frac{(6-4)-3\left(\frac{1}{2}\right)-\frac{1}{12}}{2}=\frac{5}{24}$ to $v$. If $f_2$ a $6^+$-face, then $f_4$ is a $5$-face with $|V(f_4)\cap S_2|=1$ by Configuration $(h)$. If $f_4$ a $6^+$-face, then $f_2$ is a $5$-face with $|V(f_4)\cap S_2|=1$ by Configuration $(j)$. Thus, in either case, $v$ ends with at least charge $(5-4)-3\left(\frac{1}{2}\right)+\frac{5}{24}+\frac{1}{6}+2\left(\frac{1}{12}\right)=\frac{1}{24}>0$ after receiving charge from $f$, $f_2$, $f_3$, and $f_4$. Hence assume, $f_2$ and $f_4$ are $5$-faces. As $|V(f_2)\cap S_2|=|V(f_3)\cap S_2|=1$, then $|V(f_2)\cap S(5^+, [d(v)-1]^+)|=|V(f_3)\cap S(5^+, [d(v)-1]^+)|=0$.

If $|V(f_2)\cap S_3|+|V(f_3)\cap S_3|=2$, then $|V(f_2)\cap S(5,3)|=0$ and $1\leq |V(f_3)\cap S(5,3)|\leq 2$ by configurations $(e)$ and $(f)$. Hence, $f_2$ may give $(5-4)-\frac{1}{2}-\frac{1}{3}=\frac{1}{6}$ to $v$. Thus $v$ ends with final charge at least $(5-4)-3\left(\frac{1}{2}\right)+\frac{5}{24}+\frac{1}{6}+2\left(\frac{1}{12}\right)=\frac{1}{24}$ after receiving charge from $f$, $f_2$, $f_3$, and $f_4$. Hence, assume $|V(f_2)\cap S_3|+|V(f_3)\cap S_3|\leq 1$. It follows that at least one of $f_2$ or $f_3$ contain at most four $S(5,3^+)$ vertices, and may give at least charge $\frac{(5-4)-\frac{1}{2}}{4}=\frac{1}8$ to $v$. Thus, $v$ ends with final charge at least $(5-4)-3\left(\frac{1}{2}\right)+\frac{5}{24}+\frac{1}{8}+2\left(\frac{1}{12}\right)=0$ after receiving charge from $f$, $f_2$, $f_3$, and $f_4$. 

Suppose $y\not \in S(6,5)$. If $|V(f)\cap S_2|=3$, then $f$ contains at most two $S(5,3^+)\cup S(6,5)$ vertices. Therefore, $f$ may give at least charge $\frac{(6-4)-3\left(\frac{1}{2}\right)}{2}=\frac{1}{4}$ to $v$ instead of its previous $\frac{1}{6}$. By Configuration $(j)$, either $f_2$ is a $6^+$-face or a $5$-face with exactly one incident $2$-vertex. Hence, $v$ ends with final charge at least $(5-4)-3\left(\frac{1}{2}\right)+\frac{1}{4}+3\left(\frac{1}{12}\right)=0$ after receiving charge from $f_2$, $f_3$, $f_4$, and $f$. If $|V(f)\cap S_2|=2$, then either $f$ contains one incident $3$-vertex and at most two incident $S(5,3^+)\cup S(6,5)$ vertices or no incident $3$-vertices and at most three in $S(5,3^+)\cup S(6,5)$ vertices.  In the first case, $v$ receives at least charge $\frac{6-4-2*\frac{1}{2}-\frac{1}{3}}{2}=\frac{1}{3}$, and in the second case $v$ receives at least charge $\frac{6-4-2*\frac{1}{2}}{3}=\frac{1}{3}$ from $f$. In both situations, $f$ may give at least charge $\frac{1}{3}$ to $v$.  Thus, $v$ ends with final charge at least $(5-4)-3\left(\frac{1}{2}\right)+\frac{1}{3}+2\left(\frac{1}{12}\right)=0$ after receiving charge from $f_3$, $f_4$, and $f$.

\textbf{Case 2.2.2: } $|V(f_3)\cap S_2|=2$.

Then $|V(f_1)\cap S_2|=1$. If $f_2$ is a $6^+$-face, then $f_4$ is a $5$-face. By symmetry and by Case 2.2.1, it is sufficient to consider when $|V(f_4)\cap S_2|=1$.  If $f_4$ is a $6^+$-face, then $f_2$ is a $5$-face with $|V(f_4)\cap S_2|=1$ by Configuration $(j)$. Therefore, in either situation, $v$ ends with final charge at least $(5-4)-3\left(\frac{1}{2}\right)+2\left(\frac{1}{12}\right)+2\left(\frac{1}{6}\right)=0$. Thus, assume $f_2$ and $f_4$ are both $5$-faces with $|V(f_2)\cap S_2|=|V(f_4)\cap S_2|=1$. Let $z\in [V(f_2)\cap V(f_3)]\backslash [S_2\cup \{v\}].$ By Configuration $(c)$, $d(y)\geq 6$. 

If $z\not\in S(6,5)$, then $f_2$ contains either one $3$-vertex and one $S(5,3)$-vertex by Configuration $(f)$ or no $3$-vertices and at most three $S(5,3)$-vertices. In either situations, $f_2$ may give at least charge $\frac{1}{6}$ to $v$. If $z\in S(6,5)$, then $f_2$ contains no $3$-vertices by Configuration $(b)$ and at most two in $S(5,3)$ vertices. Hence, $f_2$ may give at least charge $\frac{(5-4)-\frac{1}{2}-\frac{1}{12}}{2}=\frac{5}{24}>\frac{1}{6}$ to $v$. Therefore, regardless whether $z$ belongs to $S(6,5)$ or not, $f_2$ may give at least charge $\frac{1}{6}$ to $v$. It follows that $v$ ends with final charge at least $(5-4)-3\left(\frac{1}{2}\right)+2\left(\frac{1}{6}\right)+2\left(\frac{1}{12}\right)=0$ after taking charge from its incident faces. 

Thus, all vertices in  $S(5,3)$ end with non-negative charge. Therefore, all faces and vertices of $G$ end with a non-negative charge.
\end{proof}

Using the configurations found in Theorem 7, we determine a constant upper bound for the independent bondage number of planar graphs with $g(G)\geq 5$ and $\delta(G)\geq 2$.

\begin{theorem}
A planar graph $G$ with $g(G)\geq 5$ and $\delta(G)\geq 2$ has $b_i(G)\leq 5.$  
\label{girth5configurations}
\end{theorem}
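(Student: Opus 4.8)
The plan is to combine the structural dichotomy of Theorem~\ref{mainthm} with the edge bound of Theorem~\ref{PriddyWei}. The first thing I would record is that the girth hypothesis trivialises the intersection term: if $uv\in E(G)$ and $g(G)\ge 5$, then $u$ and $v$ have no common neighbour (a common neighbour would close a triangle), so $|N(u)\cap N(v)|=0$ and Theorem~\ref{PriddyWei} reduces to $b_i(G)\le d(u)+d(v)-1$ for every edge $uv$. By Theorem~\ref{mainthm}, $G$ realises at least one of the configurations (a)--(j), so it suffices to produce, for each configuration, an edge set $B$ with $|B|\le 5$ and $\gamma_i(G-B)>\gamma_i(G)$.

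Configuration (a) is immediate: a $(2,4^-)$-edge or a $(3,3^-)$-edge $uv$ has $d(u)+d(v)\le 6$, so the simplified Theorem~\ref{PriddyWei} already yields $b_i(G)\le 5$. All remaining configurations are built around a vertex $v$ of degree $d\ge 5$ carrying $d-1$ or $d-2$ two-neighbours; here the single-edge bound only gives $b_i(G)\le d(v)+1$, which is too weak, so a direct construction is needed. The structural observation I would use throughout is that, since $g(G)\ge 5$, the outer neighbours $y_1,\dots$ of the two-neighbours of $v$ are forced to be distinct, and each outer neighbour is non-adjacent to the other two-neighbours of $v$ (either failure produces a $4$-cycle through $v$). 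Consequently $v$ is the only economical dominator of its two-neighbours: any independent dominating set avoiding $v$ must spend a separate vertex on each of them, so $v$ lies in every minimum independent dominating set.

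For each of (b)--(j) I would then delete a constant-size set $B$ (with $|B|\le 5$, crucially independent of $d(v)$) chosen so that, after deletion, $v$ can no longer simultaneously dominate the two-neighbours lying on the prescribed $5$-face(s), and verify the increase by an exchange argument. Let $S$ be a minimum independent dominating set of $G-B$. I would build from $S$ an independent dominating set $S^{*}$ of $G$ with $|S^{*}|\le |S|-1$ by reinstating $v$, deleting from $S$ the now-superfluous private dominators of the detached two-neighbours (and, where some edge of $B$ has both ends in $S$, one offending endpoint so as to restore independence in $G$), and checking that every vertex of $G$ remains dominated. Saving at least one vertex gives $\gamma_i(G)\le |S^{*}|\le |S|-1=\gamma_i(G-B)-1$, hence $\gamma_i(G-B)>\gamma_i(G)$ and $b_i(G)\le|B|\le 5$. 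The role of the $5$-face conditions in (c)--(j) is exactly to pin down the outer vertices $y_i,u,w,w_1$ tightly enough that a bounded number of deletions blocks every equal-size repair.

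The main obstacle is the case analysis for the configurations involving two distinct $5$-faces and several two-neighbours on the outer vertices, namely (h), (i), and (j): there the deletion set must simultaneously sabotage domination on both faces while staying within the budget of five edges, and one must rule out the possibility that an independent dominating set of $G-B$ re-covers the detached two-vertices through their outer neighbours at no net cost. Making the exchange argument honest in these cases --- in particular guaranteeing that reinstating $v$ frees strictly more vertices than it costs, and that the resulting set $S^{*}$ is genuinely independent in $G$ --- is where the precise adjacency data recorded in Theorem~\ref{mainthm} (degrees, the number of two-neighbours, and which vertices share a given $5$-face) must be used in full.
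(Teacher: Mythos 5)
Your overall strategy is the same as the paper's: invoke Theorem~\ref{mainthm} to obtain one of the configurations (a)--(j), dispose of (a) via Theorem~\ref{PriddyWei} (your observation that girth $\geq 5$ forces $|N(u)\cap N(v)|=0$ is correct and implicit in the paper), and for (b)--(j) exhibit an explicit edge set $B$ with $|B|\leq 5$ together with an exchange argument showing $\gamma_i(G-B)>\gamma_i(G)$. However, as written the proposal has a genuine gap: for nine of the ten configurations you never specify $B$, and you explicitly defer the exchange arguments, which is where essentially all of the work in the paper's proof lies. The choices of $B$ in the paper are quite varied and tailored to each configuration --- e.g.\ for (c) one deletes all five edges at $v$ itself so that $v$ is forced into any minimum independent dominating set $I'$ of $G-B$ and then $v_1'$ and $v_2'$ (adjacent on the common $5$-face) cannot both be used; for (b), (f), (g), (h), (i), (j) one instead isolates selected $2$-vertices such as $v_1$, $u_1$, $w_1$ so that they are forced into $I'$, which forces $v\notin I'$, and one then swaps $v$ back in. A uniform recipe of the kind you describe (``block $v$ from dominating its $2$-neighbours on the prescribed faces'') does not by itself determine these sets, and verifying that the swapped set remains independent and dominating requires the configuration-specific adjacency data in each case.

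A secondary point: your structural claim that ``$v$ is the only economical dominator of its two-neighbours, so $v$ lies in every minimum independent dominating set'' is not justified and is false in general --- the outer neighbours $v_i'$ may be forced into a minimum independent dominating set for reasons external to the configuration, in which case no saving is realised by including $v$. The paper does not use (and does not need) this claim; instead it argues in the opposite direction, forcing $v\notin I'$ in $G-B$ and then performing the exchange. Since this claim underpins your intuition for why the deletions should work, and since the case analysis for (b)--(j) is absent, the proposal cannot be accepted as a proof in its current form.
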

\begin{proof}

Let $G$ a planar graph $G$ with $g(G)\geq 5$. By Theorem \ref{mainthm}, $G$ contains at least one of the configurations from $(a)-(j)$. If $G$ contains configuration $(a)$, then $b_i(G)\leq 5$ by Theorem \ref{PriddyWei}. Hence, we may assume $G$ contains at least one of the remaining configurations. For each configuration, we find an edge set $B\subset E(G)$ with $|B|\leq 5$ such that $|I|<|I'|$ where $I$ and $I'$ are minimum independent dominating sets of $G$ and $G'=G\backslash B$ respectively. Throughout the proof, we will suppose not; that is, $|I|\geq|I'|$.\\

\noindent \textbf{Configuration $(b)$.} Suppose $G$ contains configuration $(b)$. Let $v\in S(5^+,[d(v)-1]^+)$ and $v_i\in N(v)\cap S_2$ with $v_i'\in N(v_i)\backslash \{v\}$ for each $1\leq i \leq |N(v)\cap S_2|$. To avoid overlap with Configuration $(c)$, assume $v$ does not share a common $5$-face with any pair of its $2$-neighbors. 

We consider the first option of configuration $(b)$ and let $u\in N(v)\backslash S_2$ be a $4^-$-vertex (Figure \ref{fig:configurationbcase1}). Consider $B=\{ux|x\in N_G(u)\backslash \{v\}\}\cup \{v_1x | x\in N_G(v_1)\}$. Since $u$ is a $4^-$-vertex, then $|B|\leq 5$. Clearly, $v_1\in I'$. It follows that $N_G(v_1)\cap I'=\emptyset$. Otherwise, $I'\backslash \{v_1\}$ is an independent dominating set of $G$ with size $|I'|-1 < |I'|\leq |I|$, a contradiction as $I$ is a minimum independent dominating set of $G$. Hence, $v\notin I'$. Therefore, we must have $u\in I'$. Note that for each $2\leq i \leq |N(v)\cap S_2|$, either $|N_{G'}[v_i']\cap I'|=1$  or $|N_{G'}[v_i']\cap I'|\geq 2$.

Let $U$ be the set of all $v_i$ such that $|N_{G'}[v_i']\cap I'|=1$ and $W$ be the set of all $v_i$ such that $|N_{G'}[v_i']\cap I'|\geq 2$.  Then for each $v_i\in U$, $N_{G'}(v_i')\cap I'=\{v_i\}$, it follows that $[I'\backslash\{v_i\}]\cup\{v_i'\}$ is a minimum independent dominating set of $G'$. Thus take $U'=\{v_i'\}$, where $v_i \in U$. Note that $|U|=|U'|$. For each $v_i\in W$, there exists another vertex that dominates $v_i'$. Now, $I'\backslash (W\cup U \cup \{v_1, u\})\cup (\{v\}\cup U')$ is an independent dominating set of $G$ with size $|I'|-|U\cup W|-2+1 +|U|<|I'|\leq |I|$, a contradiction.

%If $v_i'\in I'$ for some $2\leq i\leq |N(v)\cap S_2|$, then $I'\backslash \{v_1,u,v_i\}\cup\{v,\}$ is an independent dominating set of $G$ of size $|I'|-2+1<|I'|\leq |I|$, a contradiction. Thus, we may assume that at least one $u_t\in I'$. 

% Thus, $(I'\backslash N_G(v))\cup \{v\}$ is an independent dominating set of size at most $|I'|-2+1<|I'|\leq |I|$, a contradiction.

From our case of the first option of configuration $(b)$, we reference two claims that will often be used in our other cases. 

\textbf{Claim 1:} For each vertex $v\in I'$, $N_G(v)\cap I'=\emptyset$.\\

\textbf{Claim 2:} Given a vertex $v_i$, if $v_i'\in S_2\cap N(v)$, then we may assume either $N[v_i'] \cap I' = \{v_i'\}$ or $v_i\in I$ and there exists a vertex $y\in (N(v_i')\backslash \{v_i\})\cap I'$.\\

\begin{figure}[!htb]
   \begin{minipage}{0.48\textwidth}
     \centering
        \begin{tikzpicture}[scale=0.65]
   [
    > = stealth, % arrow head style
    shorten > = 1pt, % don't touch arrow head to node
    auto,
    node distance = 2cm, % distance between nodes
    thick, dashed pattern=on % line style
    ]

    \tikzset{every state}=[
    draw = black,
    thick,
    fill = white,
    minimum size = 1mm
    ]
    \node[circle,draw=black, fill=black,scale=0.5, label = right :\small{v},minimum size=1pt](1) at (0,0){};
    \path (1) ++(18:2) node[circle,draw=black, fill=black,scale=0.5, label = below :\small{$v_1$}](2) {};
    \path (1) ++(90:2) node[circle,draw=black, fill=black,scale=0.5, label = right:\small{$u$}](3) {};
    \path (1) ++(162:2) node[circle,draw=black, fill=black,scale=0.5, label = below :\small{$v_4$}](4) {};
    \path (1) ++(234:2) node[circle,draw=black, fill=black,scale=0.5, label = below :\small{$v_3$}](5) {};
    \path (1) ++(306:2) node[circle,draw=black, fill=black,scale=0.5, label = below :\small{$v_2$}](6) {};
    
    \path (2) ++(30:2) node[circle,draw=black, fill=black,scale=0.5, label = below :\small{$v_1'$}](8) {};
    \path (3) ++(90:2) node[circle,draw=black, fill=black,scale=0.5](10) {};
    \path (4) ++(150:2) node[circle,draw=black, fill=black,scale=0.5](9) {};
    \path (5) ++(210:2) node[circle,draw=black, fill=black,scale=0.5](11) {};
    \path (6) ++(-30:2) node[circle,draw=black, fill=black,scale=0.5](12) {};
    \path (3) ++(25:2) node[circle,draw=black, fill=black,scale=0.5](14) {};
    \path (3) ++(155:2) node[circle,draw=black, fill=black,scale=0.5](15) {};

     %\path (2) ++(320:1.8) node[circle,draw=black, fill=black,scale=0.5, label = below :\small{$y$}](14){} ;

    \path [-,ultra thin] (1) edge node[left] {} (3);
    \path [-,ultra thin] (1) edge node[left] {} (4);
    \path [-,ultra thin] (1) edge node[left] {} (5);
    \path [-,ultra thin] (1) edge node[left] {} (6);
    %\path [-,ultra thin] (1) edge node[left] {} (7);

    \path [-,ultra thin] (4) edge node[left] {} (9);
    \path [-,ultra thin] (5) edge node[left] {} (11);
    
   % \path [-,ultra thin] (7) edge node[left] {} (13);
    \path [-,ultra thin] (6) edge node[left] {} (12);
    
    \path [-, thin,draw, dotted, color=black] (3) edge node[left] {} (15);
    \path [- , thin,draw, dotted, color=black] (3) edge node[left] {} (10);
    \path [-, thin,draw, dotted, color=black] (1) edge node[left] {} (2);
    \path [-, thin,draw, dotted, color=black] (2) edge node[left] {} (8);
    \path [-, thin,draw, dotted, color=black] (3) edge node[left] {} (14);

%\node[shape=circle,draw=black,label=right:$2$] (4) at (0,0) {4};
%    \node[shape=circle,draw=black, label=right:$5.75$] (3) at (4.5,1.125) {3};
 %   \node[shape=circle,draw=black, label=right:$4.25$] (2) at (3,6) {2};
  %  \node[shape=circle,draw=black, label=above:$7.75$] (1) at (0,6) {1};

  %\path [-] (1) edge node[left] {} (2);
   % \path [-](2) edge node[left] {} (1);
    %\path [-](1) edge node[left] {} (4);
    %\path [-,draw, dotted, color=red](4) edge node[left] {} (2);
    %\path [-,draw, dotted, color=red](2) edge node[right] {} (3);
    %\path [-,draw, dotted, color=red](3) edge node[left] {} (1);

    \end{tikzpicture}

    \caption{Configuration $\ref{girth5confb}$, Option 1}
    \label{fig:configurationbcase1}
   \end{minipage}\hfill
   \begin{minipage}{0.48\textwidth}
     \centering
     \begin{tikzpicture}[scale=0.55]
   [
    > = stealth, % arrow head style
    shorten > = 1pt, % don't touch arrow head to node
    auto,
    node distance = 2cm, % distance between nodes
    thick, dashed pattern=on % line style
    ]

    \tikzset{every state}=[
    draw = black,
    thick,
    fill = white,
    minimum size = 1mm
    ]
    \node[circle,draw=black, fill=black,scale=0.5, label = right :\small{v},minimum size=1pt](1) at (0,0){};
    \path (1) ++(18:2) node[circle,draw=black, fill=black,scale=0.5, label = below :\small{$v_1$}](2) {};
    \path (1) ++(90:2) node[circle,draw=black, fill=black,scale=0.5, label = right:\small{$u$}](3) {};
    \path (1) ++(162:2) node[circle,draw=black, fill=black,scale=0.5, label = below :\small{$v_4$}](4) {};
    \path (1) ++(234:2) node[circle,draw=black, fill=black,scale=0.5, label = below :\small{$v_3$}](5) {};
    \path (1) ++(306:2) node[circle,draw=black, fill=black,scale=0.5, label = below :\small{$v_2$}](6) {};
    
    \path (2) ++(30:2) node[circle,draw=black, fill=black,scale=0.5, label = below :\small{$v_1'$}](8) {};
    \path (3) ++(90:2) node[circle,draw=black, fill=black,scale=0.5](10) {};
    \path (4) ++(150:2) node[circle,draw=black, fill=black,scale=0.5](9) {};
    \path (5) ++(210:2) node[circle,draw=black, fill=black,scale=0.5](11) {};
    \path (6) ++(-30:2) node[circle,draw=black, fill=black,scale=0.5, label = below :\small{$v_2'$}](12) {};  
    \path (3) ++(25:2) node[circle,draw=black, fill=black,scale=0.5, label = below :\small{$u_1$}](13) {};
    \path (3) ++(155:2) node[circle,draw=black, fill=black,scale=0.5](14) {};
    \path (13) ++(50:2) node[circle,draw=black, fill=black,scale=0.5](15) {};

     %\path (2) ++(320:1.8) node[circle,draw=black, fill=black,scale=0.5, label = below :\small{$y$}](14){} ;

    \path [-,ultra thin] (1) edge node[left] {} (3);
    \path [-,ultra thin] (1) edge node[left] {} (4);
    \path [-,ultra thin] (1) edge node[left] {} (5);
    \path [-,ultra thin] (1) edge node[left] {} (6);

    \path [-,ultra thin] (4) edge node[left] {} (9);
    \path [-,ultra thin] (5) edge node[left] {} (11);
    
    \path [-, thin,draw, dotted, color=black] (6) edge node[left] {} (12);
    
    \path [-, thin,draw, dotted, color=black] (3) edge node[left] {} (13);
    \path [-,ultra thin]  (3) edge node[left] {} (10);
    \path [-, thin,draw, dotted, color=black] (1) edge node[left] {} (2);
    \path [-, thin,draw, dotted, color=black] (2) edge node[left] {} (8);
    \path [-,ultra thin] (3) edge node[left] {} (14);
    \path [-, thin,draw, dotted, color=black] (13) edge node[left] {} (15);

%\node[shape=circle,draw=black,label=right:$2$] (4) at (0,0) {4};
%    \node[shape=circle,draw=black, label=right:$5.75$] (3) at (4.5,1.125) {3};
 %   \node[shape=circle,draw=black, label=right:$4.25$] (2) at (3,6) {2};
  %  \node[shape=circle,draw=black, label=above:$7.75$] (1) at (0,6) {1};

  %\path [-] (1) edge node[left] {} (2);
   % \path [-](2) edge node[left] {} (1);
    %\path [-](1) edge node[left] {} (4);
    %\path [-,draw, dotted, color=red](4) edge node[left] {} (2);
    %\path [-,draw, dotted, color=red](2) edge node[right] {} (3);
    %\path [-,draw, dotted, color=red](3) edge node[left] {} (1);

    \end{tikzpicture}
    \caption{Configuration $\ref{girth5confb}$, Option 2}
    \label{fig:configurationbcase2)}
   \end{minipage}
\end{figure}

We now consider the second option configuration $(b)$ and let $u\in N(v)\cap S(5^+,1^+)$. Then there exists $u_1\in (N(u)\backslash \{v\})\cap S_2$ (Figure \ref{fig:configurationbcase2)}). Let $B=\{u_1x|x\in N(u_1)\}\cup \{v_1x | x\in N(v_1)\}\cup \{v_2v_2'\}$. As $d(u_1)=d(v_1)=2$, then $|B|=5$. Clearly, $u_1,v_1\in I'$. 

By Claim 1, $N_G(v_1)\cap I'= \emptyset$. Thus, $v,u\notin I'$. It follows that $v_2\in I'$. By Claim 2, we assume either $N[v_i'] \cap I' = \{v_i'\}$ or there exists a vertex $y\in (N(v_i')\backslash \{v_i\})\cap I'$ with $v_i\in I'$ for each $3\leq i \leq |N_G(v)\cap S_2|$. Hence, $(I'\backslash N_G(v))\cup \{v\}$ is an independent dominating set of $G$ of size at most $|I'|-2+1<|I'|\leq |I|$, a contradiction.\\

\noindent \textbf{Configuration $(c)$.} Suppose $G$ contains configuration $(c)$. Let $v\in S_5$, $v_i \in N(v)\cap S_2$,  and $v_i'\in N(v_i)\backslash \{v\}$ for $i=1,2$ such that $\{v, v_1, v_2, v_1', v_2'\}$ lie on a common $5$-face (Figure \ref{configuration c}). Consider $B=\{vx | x\in N_G(v)\}$. Clearly, $v\in I'$. By Claim 1, $v_1,v_2\notin I'$. Hence, in order for $I'$ to be dominating, $v_1', v_2'\in I'$, a contradiction as $I'$ is independent.\\

%Now, consider the case when $G$ has Configuration $(c)$. Let $v\in S_5$ such that $\{v_1,v_2\} \in N(v)\cap S_2$ and $v_1',v_2'\in N(v_1)\setminus\{v\},N(v_2)\setminus\{v\}$ respectively, and $\{v,v_1,v_2\}$ lies in a common $5$-face with $v_1',v_2'$,  and $v_i \subset N(v)\setminus \{v_1,v_2\}$ for $i=3,4,5$  (Figure \ref{configuration c}).  Let $\tilde{E}=\{vv_1,vv_2,vv_3,vv_4,vv_5\}$, and let $I, I'$ be the minimum independent dominating sets of $G$ and $G'$ respectively, where $G'=G\setminus \tilde{E}$. We claim $|I|<|I'|$. Suppose not, i.e. $|I|\geq |I'|$. Clearly, $v\in I'$. So, $N(v)\cap I'=\emptyset.$ Thus, $v_1 \notin I'$. Then $v_1'\in I'$, and thus $v_2'\notin I'$ as $v_2'\in N(v_1')$. Then, it forces that $v_2\in I'$ to dominate $v_2$. This is a contradiction, because $I'-\{v_2\}$ is an independent dominating set of $G$ of size $|I'|-1<|I'|\leq |I|$.\\

\begin{figure}[!htb]
   \begin{minipage}{0.48\textwidth}
     \centering
     \begin{tikzpicture}[scale=0.65]
   [
    > = stealth, % arrow head style
    shorten > = 1pt, % don't touch arrow head to node
    auto,
    node distance = 2cm, % distance between nodes
    thick, dashed pattern=on % line style
    ]

    \tikzset{every state}=[
    draw = black,
    thick,
    fill = white,
    minimum size = 1mm
    ]
    \node[circle,draw=black, fill=black,scale=0.5, label = right:\small{$v$}](1) at (0,0){};
    \path (1) ++(36:2) node[circle,draw=black, fill=black,scale=0.5, label = above:\small{$v_1$}](2) {};
    \path (1) ++(108:2) node[circle,draw=black, fill=black,scale=0.5, label = above:\small{$v_5$}](3) {};
    \path (1) ++(180:2) node[circle,draw=black, fill=black,scale=0.5, label = left:\small{$v_4$}](4) {};
    \path (1) ++(252:2) node[circle,draw=black, fill=black,scale=0.5, label = below:\small{$v_3$}](5) {};
    \path (1) ++(324:2) node[circle,draw=black, fill=black,scale=0.5, label = below:\small{$v_2$}](6) {};
    
    \path (2) ++(0:3) node[circle,draw=black, fill=black,scale=0.5, label =  right:\small{$v_1'$}](7) {};
    \path (6) ++(0:3) node[circle,draw=black, fill=black,scale=0.5, label =  right:\small{$v_2'$}](8) {};

    \path [-, thin,draw, dotted, color=black] (1) edge node[left] {} (2);
    \path [-, thin,draw, dotted, color=black] (1) edge node[left] {} (3);
    \path [-, thin,draw, dotted, color=black] (1) edge node[left] {} (4);
    \path [-, thin,draw, dotted, color=black] (1) edge node[left] {} (5);
    \path [-, thin,draw, dotted, color=black] (1) edge node[left] {} (6);
    
    \path [-,ultra thin] (2) edge node[left] {} (7);
    \path [-,ultra thin] (6) edge node[left] {} (8);
    \path [-,ultra thin] (7) edge node[left] {} (8);

%\node[shape=circle,draw=black,label=right:$2$] (4) at (0,0) {4};
%    \node[shape=circle,draw=black, label=right:$5.75$] (3) at (4.5,1.125) {3};
 %   \node[shape=circle,draw=black, label=right:$4.25$] (2) at (3,6) {2};
  %  \node[shape=circle,draw=black, label=above:$7.75$] (1) at (0,6) {1};

  %\path [-] (1) edge node[left] {} (2);
   % \path [-](2) edge node[left] {} (1);
    %\path [-](1) edge node[left] {} (4);
    %\path [-,draw, dotted, color=red](4) edge node[left] {} (2);
    %\path [-,draw, dotted, color=red](2) edge node[right] {} (3);
    %\path [-,draw, dotted, color=red](3) edge node[left] {} (1);

    \end{tikzpicture}
    \caption{Configuration $\ref{girth5confc}$}
    \label{configuration c}
    \end{minipage}\hfill
    \begin{minipage}{0.48\textwidth}
     \centering
     \centering
    \begin{tikzpicture}[scale=0.65]
   [
    > = stealth, % arrow head style
    shorten > = 1pt, % don't touch arrow head to node
    auto,
    node distance = 2cm, % distance between nodes
    thick, dashed pattern=on % line style
    ]

    \tikzset{every state}=[
    draw = black,
    thick,
    fill = white,
    minimum size = 1mm
    ]
    \node[circle,draw=black, fill=black,scale=0.5, label = below left:\small{$v$}](1) at (0,0){};
    \path (1) ++(30:2) node[circle,draw=black, fill=black,scale=0.5, label = below:\small{$v_1$}](2) {};
    \path (1) ++(80:2) node[circle,draw=black, fill=black,scale=0.5, label = above:\small{$v_3$}](3) {};
    \path (1) ++(130:2) node[circle,draw=black, fill=black,scale=0.5, label = above:\small{$v_4$}](15) {};
    
    \path (1) ++(180:3) node[circle,draw=black, fill=black,scale=0.5, label = below right:\small{$u$}](4) {};
    \path (1) ++(260:2) node[circle,draw=black, fill=black,scale=0.5, label = below:\small{$v_5$}](5) {};
    \path (1) ++(330:2) node[circle,draw=black, fill=black,scale=0.5, label = below:\small{$v_2$}](6) {};
    \path (4) ++(90:2) node[circle,draw=black, fill=black,scale=0.5](19) {};
    \path (4) ++(150:2) node[circle,draw=black, fill=black,scale=0.5](20) {};
    \path (4) ++(210:2) node[circle,draw=black, fill=black,scale=0.5](21) {};
    \path (4) ++(270:2) node[circle,draw=black, fill=black,scale=0.5](22) {};
    \path (2) ++(0:3) node[circle,draw=black, fill=black,scale=0.5, label = right:\small{$v_1'$}](7) {};
    \path (6) ++(0:3) node[circle,draw=black, fill=black,scale=0.5, label = right:\small{$v_2'$}](8) {};

    \path [-, thin,draw, dotted, color=black] (1) edge node[left] {} (4);
    \path [-, thin,draw, dotted, color=black] (4) edge node[left] {} (19);
    \path [-, thin,draw, dotted, color=black] (4) edge node[left] {} (20);
    \path [-, thin,draw, dotted, color=black] (4) edge node[left] {} (21);
    \path [-, thin,draw, dotted, color=black] (4) edge node[left] {} (22);

    \path [-,ultra thin] (1) edge node[left] {} (2);
    \path [-,ultra thin] (1) edge node[left] {} (3);
    \path [-,ultra thin] (1) edge node[left] {} (5);
    \path [-,ultra thin] (1) edge node[left] {} (6);
   % \path [-,ultra thin] (3) edge node[left] {} (17);
   % \path [-,ultra thin] (15) edge node[left] {} (16);
    \path [-,ultra thin] (1) edge node[left] {} (15);
   % \path [-,ultra thin] (5) edge node[left] {} (18);
    
    \path [-,ultra thin] (2) edge node[left] {} (7);
    \path [-,ultra thin] (6) edge node[left] {} (8);

    \path [-,ultra thin] (7) edge node[left] {} (8);

%\node[shape=circle,draw=black,label=right:$2$] (4) at (0,0) {4};
%    \node[shape=circle,draw=black, label=right:$5.75$] (3) at (4.5,1.125) {3};
 %   \node[shape=circle,draw=black, label=right:$4.25$] (2) at (3,6) {2};
  %  \node[shape=circle,draw=black, label=above:$7.75$] (1) at (0,6) {1};

  %\path [-] (1) edge node[left] {} (2);
   % \path [-](2) edge node[left] {} (1);
    %\path [-](1) edge node[left] {} (4);
    %\path [-,draw, dotted, color=red](4) edge node[left] {} (2);
    %\path [-,draw, dotted, color=red](2) edge node[right] {} (3);
    %\path [-,draw, dotted, color=red](3) edge node[left] {} (1);

    \end{tikzpicture}

    \caption{Configuration $\ref{girth5confd}$}
    \label{configuration d}
   \end{minipage}
\end{figure}

\textbf{Configuration $(d)$.} Suppose $G$ contains configuration $(d)$. For $v\in S(6^+,d(v)-1)$, let $u\in N(v)$ be a $5^-$-neighbor and $v_i\in N(v)\cap S_2$ with $v_i'\in N(v_i)\backslash \{v\}$ for $1\leq i \leq d(v)-1$ such that $\{v_1,v_2, v\}$ lie on a common $5$-face (Figure \ref{configuration d}). Consider the edge set $B=\{ux | x\in N_G(u)\}$. Then $|B|\leq 5$. It is clear that $u\in I'$. By Claim 1, $v\not\in I'$. As $\{v,v_1,v_2,v_1', v_2'\}$ lie on a common $5$-face and $d(v_1)=d(v_2)=2$, it follows that at least one of \{$v_1,v_2\}$ belong to $I'$. Otherwise, $v_1',v_2'\in I'$, a contradiction as $I'$ is independent. Without loss of generality, we may assume $v_1\in I'$. By Claim 2, we assume either $N[v_i'] \cap I' = \{v_i'\}$ or there exists a vertex $y\in (N(v_i')\backslash \{v_i\})\cap I'$ with $v_i\in I'$ for each $2\leq i \leq d_G(v)-1$. Then $(I'\backslash (N_G(v)\cup \{v_1'\}))\cup (\{v\})$ is an independent dominating set of $G$ of size at most $|I'|-2+1<|I'|\leq |I|$, a contradiction. \\

\noindent \textbf{Configuration $(e)$.} Suppose $G$ contains configuration $(e)$. For $v\in S(5^+, (d(v)-2)^+)$, let $v_i\in N(v)\cap S_2$ with $v_i'\in N_G(v_i)\backslash \{v\}$ for each $1\leq i \leq d(v)-2$ and $u,w\in N(v)$ be the remaining $3^-$-vertices (Figure \ref{configuration e}). Consider the edge set $B=\{ux | x\in N(u)\}\cup \{wx | x\in N(w)\backslash \{v\}\}$. Then $3\leq |B| \leq 5$. Clearly, $u\in I'$.  By Claim 1, $v\not\in I'$. It follows that $w\in I'$. By Claim 2, we assume either $N[v_i'] \cap I' = \{v_i'\}$ or there exists a vertex $y\in (N(v_i')\backslash \{v_i\})\cap I'$ with $v_i\in I'$ for each $1\leq i \leq d_G(v)-2$. Then $(I'\backslash N_G(v))\cup \{v\}$ is an independent dominating set of $G$ of size at most $|I'|-2+1=|I'|-1<|I'|\leq |I|$, a contradiction. \\

%Now, consider the case $G$ has Configuration $(e)$. Let $v$ be a vertex with degree at least $5$ and having at least $d(v)-2$ $2$-neighbors and two $3^-$-neighbors such that $v_i\in N(v)$ and $v_i'\in N(v_i)\setminus\{v_i\}$ for $i=1,2,\dots,d(v)-2$, and $\{u,w\}\subset N(v)\setminus (N(v)\cap S_2)$, $u_i,w_i\in N(u)\setminus\{v\}$ and $N(w)\setminus \{v\}$ respectively for $i=1,2$ (if exists) (Figure \ref{configuration e}). Let $\tilde{E}=\{vu, uu_1,uu_2,w_1,ww_1,ww_2\}$. Clearly, $u\in I'$.  Then, $v\notin I'$ since $v\in N(u)$. Thus, $w\in I'$ since $\{ww_1,ww_2\}\not\subset E(G')$. If vertices in $N(v)\setminus \{u,w\}\not \in I'$, then $(I'\setminus\{u,w\})\cup \{v\}$ is an independent dominating set of $G$ with size  $|I'|-1<|I'|\leq |I|;$ a contradiction, since \(I\) serves as a smallest independent dominating set of \(G\). If at least one from $N(v)\setminus\{u,w\}\in I'$, we can find even smaller independent dominating set for $G $ as described in above configurations.\\

\begin{figure}[!htb]
   \begin{minipage}{0.48\textwidth}
     \centering
    \begin{tikzpicture}[scale=0.55]
   [
    > = stealth, % arrow head style
    shorten > = 1pt, % don't touch arrow head to node
    auto,
    node distance = 2cm, % distance between nodes
    thick, dashed pattern=on % line style
    ]

    \tikzset{every state}=[
    draw = black,
    thick,
    fill = white,
    minimum size = 1mm
    ]
    \node[circle,draw=black, fill=black,scale=0.5, label = below left:\small{$v$}](1) at (0,0){};
    \path (1) ++(36:2) node[circle,draw=black, fill=black,scale=0.5, label = below:\small{$u$}](2) {};
    \path (1) ++(108:2) node[circle,draw=black, fill=black,scale=0.5, label = left:\small{$v_1$}](3) {};
    \path (1) ++(180:2) node[circle,draw=black, fill=black,scale=0.5, label = below:\small{$v_2$}](4) {};
    \path (1) ++(252:2) node[circle,draw=black, fill=black,scale=0.5, label = right:\small{$v_3$}](5) {};
    
    \path (1) ++(324:2) node[circle,draw=black, fill=black,scale=0.5, label = left:\small{$w$}](6) {};
    
    \path (2) ++(0:2) node[circle,draw=black, fill=black,scale=0.5](7) {};
    \path (2) ++(72:2) node[circle,draw=black, fill=black,scale=0.5](8) {};
    \path (3) ++(108:2) node[circle,draw=black, fill=black,scale=0.5, label = left:\small{$v_1'$}](9) {};
    
    \path (6) ++(288:2) node[circle,draw=black, fill=black,scale=0.5](12) {};
    \path (6) ++(0:2) node[circle,draw=black, fill=black,scale=0.5](13) {};

    \path [-, thin,draw, dotted, color=black] (6) edge node[left] {} (12);
    \path [-, thin,draw, dotted, color=black] (1) edge node[left] {} (2);
    \path [-, thin,draw, dotted, color=black] (2) edge node[left] {} (7);
    \path [-, thin,draw, dotted, color=black] (2) edge node[left] {} (8);
    \path [-, thin,draw, dotted, color=black] (6) edge node[left] {} (13);
    
    \path [-,ultra thin] (3) edge node[left] {} (9);
    \path [-,ultra thin] (1) edge node[left] {} (4);
    \path [-,ultra thin] (1) edge node[left] {} (5);
    \path [-,ultra thin] (1) edge node[left] {} (6);
    \path [-,ultra thin] (1) edge node[left] {} (3);

%\node[shape=circle,draw=black,label=right:$2$] (4) at (0,0) {4};
%    \node[shape=circle,draw=black, label=right:$5.75$] (3) at (4.5,1.125) {3};
 %   \node[shape=circle,draw=black, label=right:$4.25$] (2) at (3,6) {2};
  %  \node[shape=circle,draw=black, label=above:$7.75$] (1) at (0,6) {1};

  %\path [-] (1) edge node[left] {} (2);
   % \path [-](2) edge node[left] {} (1);
    %\path [-](1) edge node[left] {} (4);
    %\path [-,draw, dotted, color=red](4) edge node[left] {} (2);
    %\path [-,draw, dotted, color=red](2) edge node[right] {} (3);
    %\path [-,draw, dotted, color=red](3) edge node[left] {} (1);

    \end{tikzpicture}

    \caption{Configuration $\ref{girth5confe}$}
    \label{configuration e}
   \end{minipage}\hfill
   \begin{minipage}{0.48\textwidth}
     \centering
    \begin{tikzpicture}[scale=0.65]
   [
    > = stealth, % arrow head style
    shorten > = 1pt, % don't touch arrow head to node
    auto,
    node distance = 2cm, % distance between nodes
    thick, dashed pattern=on % line style
    ]

    \tikzset{every state}=[
    draw = black,
    thick,
    fill = white,
    minimum size = 1mm
    ]
    \node[circle,draw=black, fill=black,scale=0.5, label = below:\small{$v$}](1) at (0,0){};
    \path (1) ++(36:2) node[circle,draw=black, fill=black,scale=0.5, label = below:\small{$w$}](2) {};
    \path (1) ++(108:2) node[circle,draw=black, fill=black,scale=0.5, label = right:\small{$u$}](3) {};
    \path (3) ++(78:2) node[circle,draw=black, fill=black,scale=0.5, label = right:\small{$u_1$}](12) {};
    \path (3) ++(138:2) node[circle,draw=black, fill=black,scale=0.5, label = left:\small{$u_2$}](13) {};
    \path (1) ++(180:2) node[circle,draw=black, fill=black,scale=0.5, label = below:\small{$v_3$}](4) {};
    
    \path (1) ++(252:2) node[circle,draw=black, fill=black,scale=0.5, label = right:\small{$v_2$}](5) {};
    
    \path (1) ++(324:2) node[circle,draw=black, fill=black,scale=0.5, label = above:\small{$v_1$}](6) {};
    
    \path (2) ++(0:3) node[circle,draw=black, fill=black,scale=0.5, label = right:\small{$w_1$}](7) {};
    \path (6) ++(0:3) node[circle,draw=black, fill=black,scale=0.5, label = right:\small{$v_1'$}](8) {};
    \path [-, thin,draw, dotted, color=black] (1) edge node[left] {} (3);

    \path [-,ultra thin] (1) edge node[left] {} (2);
    \path [-,ultra thin] (1) edge node[left] {} (6);
    \path [-,ultra thin] (1) edge node[left] {} (4);
    \path [-,ultra thin] (1) edge node[left] {} (5);
    \path [-,ultra thin] (6) edge node[left] {} (8);
    
    \path [-, thin,draw, dotted, color=black] (2) edge node[left] {} (7);
    \path [-, thin,draw, dotted, color=black] (3) edge node[left] {} (12);
    \path [-, thin,draw, dotted, color=black] (3) edge node[left] {} (13);
    \path [-, thin,draw, dotted, color=black] (8) edge node[left] {} (7);

%\node[shape=circle,draw=black,label=right:$2$] (4) at (0,0) {4};
%    \node[shape=circle,draw=black, label=right:$5.75$] (3) at (4.5,1.125) {3};
 %   \node[shape=circle,draw=black, label=right:$4.25$] (2) at (3,6) {2};
  %  \node[shape=circle,draw=black, label=above:$7.75$] (1) at (0,6) {1};

  %\path [-] (1) edge node[left] {} (2);
   % \path [-](2) edge node[left] {} (1);
    %\path [-](1) edge node[left] {} (4);
    %\path [-,draw, dotted, color=red](4) edge node[left] {} (2);
    %\path [-,draw, dotted, color=red](2) edge node[right] {} (3);
    %\path [-,draw, dotted, color=red](3) edge node[left] {} (1);

    \end{tikzpicture}

    \caption{Configuration $\ref{girth5conff}$}
    \label{configuration f}
   \end{minipage}
\end{figure}

\noindent \textbf{Configuration $(f)$.} Suppose $G$ contains configuration $(f)$. Let $v\in S(5,3)$, $v_i\in N(v)\cap S_2$ with $v_i'\in N(v_i)\backslash \{v\}$ for $1\leq i \leq 3$, $u\in N(v)\cap S_3$ with $u_j\in N(u)\backslash \{v\}$ for $j=1,2$, and $w\in N(v)\cap S_{5^+}$ with $w_1\in N(w)\cap S_2$ such that $\{v, v_1, v_1', w, w_1\}$ lie on a common $5$-face (Figure \ref{configuration f}). Consider the edge set $B=\{ux | x\in N(u)\}\cup \{w_1x| x\in N(w_1)\}$. Then $|B|=5$. Clearly, $u,w_1\in I'$. By Claim 1, $v, w, v_1'\not\in I'$. Therefore, $v_1\in I'$. By Claim 2, we assume either $N[v_i'] \cap I' = \{v_i'\}$ or there exists a vertex $y\in (N(v_i')\backslash \{v_i\})\cap I'$ with $v_i\in I'$ for each $i=2,3$. Hence, $(I'\backslash N_G(v)) \cup \{v\}$ is an independent dominating set of $G$ with size at most $|I'|-2+1=|I'|-1<|I'|\leq |I|$, a contradiction.\\

%Now, consider when $G$ has Configuration $(f)$. Let $v$ be a $S(5,3)$-vertex such that $v_i\in N(v)\cap S_2$, and $v_i'\in N(v_i)\setminus \{v\}$ for $i=1,2,3$, $u\in N(v)\cap S_3$ and $u_i\in N(u)\setminus\{v\}$ for $i=1,2$, and $w\in N(v)\cap S_{5^+}$ and $w_1\in N(w)\cap S_2$ sharing a 5-face with $v,w,w_1$ and a 4-neighbor (say $v_1$) of $v$ (Figure \ref{configuration f}). Let $\tilde{E}=\{vu,uu_1,uu_2,ww_1,w_1v_1'\}$. Clearly, $u\in I'$ and $w_1\in I'$. So, $N(u)\cap I'=\emptyset$ and $N(w_1)\cap I'=\emptyset$. Thus, $\{v,w,v_1'\} \not\subset I'$. Then $v_1\in I'$ since $v_1$ is a $2$-vertex and its both neighbors are not in $I'$. Now, if vertices in $N(v)\setminus\{u,w,v_1\} \not \in I'$, then $(I'\setminus\{u,v_1\})\cup \{v\}$ is an independent dominating set of $G$ with size  $|I'|-1<|I'|\leq |I|;$ a contradiction, since \(I\) serves as a smallest independent dominating set of \(G\). If at least one from $N(v)\setminus\{u,w,v_1\}\subset I',$ we can find even smaller independent dominating set for $G$ as described in above configurations.\\
 
%%configuration (g)
\noindent \textbf{Configuration $(g)$.} Suppose $G$ contains configuration $(g)$. Let $v\in S(5,3)$, $v_i\in N(v)\cap S_2$ with $v_i'\in N(v_i)\backslash \{v\}$ for each $i=1,2,3$, $u\in N(v)\cap S_3$ with $u_j\in N(u)\backslash \{v\}$ for $j=1,2$, and $w\in N(v)\cap S_{5^+}$ with $w_1\in N(w)\cap S_2$ such that $\{v,u,u_1,w,w_1\}$ lie on a common $5$-face. (Figure \ref{configuration g}). Consider $B=\{v_1x | x\in N(v_1)\}\cup \{w_1x| x\in N(w_1)\}\cup \{uu_2\}$. Then $|B|=5$. Let $I$ and $I'$ be minimum independent dominating sets of $G$ and $G'=G-B$ respectively. Clearly, $v_1,w_1\in I'$. By Claim 1, $N_G(v_1)\cap I'=\emptyset$ and $N_G(w_1)\cap I'=\emptyset$. Hence, it must be that $u\in I'$.  By Claim 2, we assume either $N[v_i'] \cap I' = \{v_i'\}$ or there exists a vertex $y\in (N(v_i')\backslash \{v_i\})\cap I'$ with $v_i\in I'$ for each $i=1,2,3$. Then $(I'\backslash N_G(v))\cup \{v\}$ is an independent dominating set of $G$ of size at most $|I'|-2+1=|I'|-1<|I'|\leq |I|$, a contradiction. \\

%Now, consider $G$ has Configuration $(g)$. Let $v$ be a $S(5,3)$-vertex such that $v_i\in N(v)\cap S_2$ and $v_i'\in N(v_i)\setminus\{v\}$ for $i=1,2,3$, $u\in N(v)\cap S_3$ and $u_i\in N(u)\setminus \{v\}$ for $i=1,2$, $w\in N(v)\cap S_{5^+}$ and $w_1\in N(w)\cap S_2$ sharing a 5-face with $v,w,w_1,u$ and one vertex from $N(u)\setminus\{v\}$ (say $u_1$) (Figure \ref{configuration g}). Let $\tilde{E}=\{vv_1, v_1v_1', uu_2, ww_1, w_1u_1\}$. Clearly, $\{v_1,w_1\}\in I'$. Then, $N(v_1)\cap I'=\emptyset$ and $N(w_1)\cap I'=\emptyset$. Thus, $\{v,w,u_1\}\not \subset I'$. So, $u\in I'$ since $uu_2\notin E(G')$. If vertices in $N(v)\setminus\{u,w,v_1\}\not \subset I'$, then $(I'\setminus\{u,v_1\})\cup \{v\}$ is an independent dominating set of $G$ with size  $|I'|-1<|I'|\leq |I|$, a contradiction, since \(I\) serves as a smallest independent dominating set of \(G\). If at least one vertex $N(v)\setminus\{u,w,v_1\}\in I'$, we can find much smaller independent dominating set for $G$ than that of $G$ described in previous configurations.\\

\begin{figure}[!htb]
   \begin{minipage}{0.50\textwidth}
     \centering
    \begin{tikzpicture}[scale=0.65]
   [
    > = stealth, % arrow head style
    shorten > = 1pt, % don't touch arrow head to node
    auto,
    node distance = 2cm, % distance between nodes
    thick, dashed pattern=on % line style
    ]

    \tikzset{every state}=[
    draw = black,
    thick,
    fill = white,
    minimum size = 1mm
    ]
    \node[circle,draw=black, fill=black,scale=0.5, label = below:\small{$v$}](1) at (0,0){};
    \path (1) ++(36:2) node[circle,draw=black, fill=black,scale=0.5, label = below:\small{$u$}](2) {};
    \path (1) ++(108:2) node[circle,draw=black, fill=black,scale=0.5, label = left:\small{$v_1$}](3) {};
    \path (3) ++(108:2) node[circle,draw=black, fill=black,scale=0.5, label = left:\small{$v_1'$}](12) {};
    \path (2) ++(90:2) node[circle,draw=black, fill=black,scale=0.5, label = left:\small{$u_2$}](13) {};
    \path (1) ++(180:2) node[circle,draw=black, fill=black,scale=0.5, label = below:\small{$v_2$}](4) {};
    \path (1) ++(252:2) node[circle,draw=black, fill=black,scale=0.5, label = right:\small{$v_3$}](5) {};
    
    \path (1) ++(324:2) node[circle,draw=black, fill=black,scale=0.5, label = above:\small{$w$}](6) {};
    
    \path (2) ++(0:3) node[circle,draw=black, fill=black,scale=0.5, label = below left:\small{$u_1$}](7) {};
    
    \path (6) ++(0:3) node[circle,draw=black, fill=black,scale=0.5, label = above left:\small{$w_1$}](8) {};
    \path [-, thin,draw, dotted, color=black] (1) edge node[left] {} (3);
    \path [-,ultra thin] (2) edge node[left] {} (7);
    \path [-,ultra thin] (1) edge node[left] {} (2);
    \path [-,ultra thin] (1) edge node[left] {} (6);
    \path [-,ultra thin] (1) edge node[left] {} (4);
    \path [-,ultra thin] (1) edge node[left] {} (5);
    
    \path [-, thin,draw, dotted, color=black] (3) edge node[left] {} (12);
    \path [-, thin,draw, dotted, color=black] (2) edge node[left] {} (13);
    \path [-, thin,draw, dotted, color=black] (6) edge node[left] {} (8);
    \path [-, thin,draw, dotted, color=black] (8) edge node[left] {} (7);

%\node[shape=circle,draw=black,label=right:$2$] (4) at (0,0) {4};
%    \node[shape=circle,draw=black, label=right:$5.75$] (3) at (4.5,1.125) {3};
 %   \node[shape=circle,draw=black, label=right:$4.25$] (2) at (3,6) {2};
  %  \node[shape=circle,draw=black, label=above:$7.75$] (1) at (0,6) {1};

  %\path [-] (1) edge node[left] {} (2);
   % \path [-](2) edge node[left] {} (1);
    %\path [-](1) edge node[left] {} (4);
    %\path [-,draw, dotted, color=red](4) edge node[left] {} (2);
    %\path [-,draw, dotted, color=red](2) edge node[right] {} (3);
    %\path [-,draw, dotted, color=red](3) edge node[left] {} (1);

    \end{tikzpicture}

    \caption{Configuration $\ref{girth5confg}$}
    \label{configuration g}
   \end{minipage} \hfill
   \begin{minipage}{0.50\textwidth}
    \centering
    \begin{tikzpicture}[scale=0.65]
   [
    > = stealth, % arrow head style
    shorten > = 1pt, % don't touch arrow head to node
    auto,
    node distance = 2cm, % distance between nodes
    thick, dashed pattern=on % line style
    ]

    \tikzset{every state}=[
    draw = black,
    thick,
    fill = white,
    minimum size = 1mm
    ]
    \node[circle,draw=black, fill=black,scale=0.5, label = below:\small{$v$}](1) at (0,0){};
    \path (1) ++(36:2) node[circle,draw=black, fill=black,scale=0.5, label = above:\small{$v_1$}](2) {};
    \path (1) ++(108:2) node[circle,draw=black, fill=black,scale=0.5, label = right:\small{$v_2$}](3) {};
    \path (1) ++(180:2) node[circle,draw=black, fill=black,scale=0.5, label = below right:\small{$w$}](4) {};
    \path (1) ++(252:2) node[circle,draw=black, fill=black,scale=0.5, label = right:\small{$v_3$}](5) {};
    \path (1) ++(324:2) node[circle,draw=black, fill=black,scale=0.5, label = above:\small{$u$}](6) {};

    \path (2) ++(36:2) node[circle,draw=black, fill=black,scale=0.5, label = above:\small{$v_1'$}](7) {};
    \path (3) ++(108:2) node[circle,draw=black, fill=black,scale=0.5, label = right:\small{$v_2'$}](8) {};
    
    \path (4) ++(108:2) node[circle,draw=black, fill=black,scale=0.5, label = left:\small{$w_1$}](10) {};

    \path (6) ++(36:2) node[circle,draw=black, fill=black,scale=0.5, label = below:\small{$u_1$}](15) {};

    \path [-,ultra thin] (1) edge node[left] {} (3);
    \path [-,ultra thin] (1) edge node[left] {} (4);
    \path [-,ultra thin] (1) edge node[left] {} (5);
    \path [-,ultra thin] (1) edge node[left] {} (6);
    \path [-,ultra thin] (2) edge node[left] {} (7);
    \path [-,ultra thin] (3) edge node[left] {} (8);

    \path [-, thin,draw, dotted, color=black] (10) edge node[left] {} (8);
    \path [-, thin,draw, dotted, color=black] (4) edge node[left] {} (10);
    \path [-, thin,draw, dotted, color=black] (15) edge node[left] {} (7);
    \path [-, thin,draw, dotted, color=black] (2) edge node[left] {} (1);
    \path [-, thin,draw, dotted, color=black] (15) edge node[left] {} (6);

%\node[shape=circle,draw=black,label=right:$2$] (4) at (0,0) {4};
%    \node[shape=circle,draw=black, label=right:$5.75$] (3) at (4.5,1.125) {3};
 %   \node[shape=circle,draw=black, label=right:$4.25$] (2) at (3,6) {2};
  %  \node[shape=circle,draw=black, label=above:$7.75$] (1) at (0,6) {1};

  %\path [-] (1) edge node[left] {} (2);
   % \path [-](2) edge node[left] {} (1);
    %\path [-](1) edge node[left] {} (4);
    %\path [-,draw, dotted, color=red](4) edge node[left] {} (2);
    %\path [-,draw, dotted, color=red](2) edge node[right] {} (3);
    %\path [-,draw, dotted, color=red](3) edge node[left] {} (1);

    \end{tikzpicture}

    \caption{Configuration $\ref{girth5confh}$}
    \label{configuration h}
   \end{minipage}
\end{figure}

\noindent \textbf{Configuration $(h)$.} Suppose $G$ contains configuration $(h)$. Let $v\in S(5,3)$, $v_i\in N(v)\cap S_2$ with $v_i'\in N(v_i)\backslash \{v\}$ for $i=1,2,3$, and $u,w\in N(v)\cap S_{5^+}$ with $u_1\in N(u)\cap S_2$ and $w_1\in N(w)\cap S_2$ such that $\{v,u,u_1,v_1,v_1'\}$ and $\{v,w,w_1,v_2,v_2'\}$ each lie on common $5$-faces (Figure \ref{configuration h}). Let $B=\{u_1x|x\in N(u_1)\}\cup \{w_1x|x\in N(w_1)\}\cup \{vv_1\}$. Then $|B|=5$. Clearly, $w_1,u_1\in I'$. By Claim 1, $u,w,v_1',v_2'\notin I'$. As $v_1,v_2\in S_2$, then it must be that $v_1\in I'$. Thus, $v_2\in I'$ as $v\not\in I'$ by Claim 1. By Claim 2, we assume either $N[v_3'] \cap I' = \{v_3'\}$ or there exists a vertex $y\in (N(v_3')\backslash \{v_3\})\cap I'$ with $v_3\in I'$. Then $(I'\backslash N_G(v))\cup \{v\}$ is an independent dominating set of $G$ of size at most $|I'|-2+1=|I'|-1<|I'|\leq |I|$, a contradiction. \\
    
%Now, consider $G$ having Configuration $(h)$. Let $v$ be a $S(5,3)$-vertex, $v_i\in N(v)\cap S_2$ and $v_i'\in N(v_i)\setminus \{v\}$ for $i=1,2,3$, $\{u,w \}\subset N(v)\cap S_{5^+}$ and $u_1,w_1\in N(u)\cap S_2$ and $N(w)\cap S_2,$ respectively, and $\{v,u,u_1,v_1\}$ and $\{v,w,w_1,v_2\}$ lies on $5$-faces (Figure \ref{configuration h}). Let $\tilde{E}=\{vv_1, uu_1,u_1v_1',ww_1,w_1v_2'\}$, and $I,I',G'$ is defined as the previous cases. Clearly, $\{u_1,w_1\in I'\}$. Thus, $N(u_1)\cap I'=\emptyset$ and $N(w_1)\cap I'=\emptyset$. So, $\{u,v_1',w,v_2'\}\not \subset I'$. Therefore, $v_1\in I'$ as $v_1$ is a 2-vertex and $vv_1\notin E(G')$. Thus, $v\notin I'$. Similarly, $v_2\in I'$. Then, if $v_3\notin I'$, $(I'\setminus\{v_1,v_2\})\cup \{v\}$ is a minimum independent dominating set of $G$ with size  $|I'|-1<|I'|\leq |I|;$ a contradiction, since \(I\) serves as a smallest independent dominating set of \(G\). If $v_3\in I'$, and $(N(v_3')\setminus \{v_3\})\cap I'=\emptyset$, then $(I'\setminus\{v_1,v_2,v_3\})\cup \{v,v_3'\}$ is a minimum dominating set of $G$ with size  $|I'|-1<|I'|\leq |I|;$ a contradiction, since \(I\) serves as a smallest independent dominating set of \(G\), and if $(N(v_3')\setminus \{v_3\})\cap I'\neq \emptyset$, then $(I'\setminus\{v_1,v_2,v_3\})\cup \{v\}$ is a minimum dominating set of $G$ with size  $|I'|-2<|I'|\leq |I|;$ a contradiction, since \(I\) serves as a smallest independent dominating set of \(G\). \\ 

\noindent \textbf{Configuration $(i)$.} Suppose $G$ contains configuration $(i)$. Let $v\in S(5,3)$, $v_i\in N(v)\cap S_2$ with $v_i'\in N(v_i)\backslash \{v\}$ for $i=1,2,3$, $u\in N(v)\cap S_4$ with $u_j\in N(u)\backslash \{v\}$ for $j=1,2,3$, $w\in N(v)\cap S_{5^+}$ with $w_1\in N(w)\cap S_2$ such that $\{v,v_2,v_2',w,w_1\}$ and $\{v,v_1,v_1',u,u_1\}$ each lie on common $5$-faces (Figure \ref{configuration i}). Consider $B=\{v_2x|x\in N(v_2)\}\cup\{ww_1,uu_2,uu_3\}$. Then $|B|=5$. Clearly, $v_2\in I'$. By Claim 1, $N_G(v_2)\cap I'=\emptyset$. It follows that $w_1\in I'$ and $w\notin I'$. By Claim 2, we assume either $N[v_3'] \cap I' = \{v_3'\}$ or there exists a vertex $y\in (N(v_3')\backslash \{v_3\})\cap I'$ with $v_3\in I'$. 

We must have that at most one of $\{u_1,v_1'\}$ belongs to $I'$. Otherwise, we have a contradiction if both $u,v_1'\in I'$ as $I'$ is independent. If exactly one of $\{u_1,v_1'\}$ belongs to $I'$, then $(I'\backslash N_G(v))\cup \{v\}$ is an independent dominating set of $G$ with size at most $|I'|-1<|I'|\leq |I|$, a contradiction. Hence, assume both $u_1,v_1'\notin I'$. Thus, $v_1,u\in I'$. If $N_{G'}(u_1)\cap I'=\{u\}$, then $(I'\backslash N_G(v))\cup \{v,u_1\}$ is an independent dominating set of $G$ with size at most $|I'|-3+2<|I'|\leq |I|$, a contradiction. Then there exists some other vertex in $N(u_1)\cap (I'\backslash \{u\})$. By an analogous argument, there exists some vertex in $N(v_1')\cap (I'\backslash \{v_1\})$. Hence, $(I'\backslash N_G(v))\cup \{v\}$ is an independent dominating set of size at least $|I'|-3+1<|I'|\leq |I|$, a contradiction.\\

\begin{figure}[!htb]
   \begin{minipage}{0.48\textwidth}
     \centering
     \centering
    \begin{tikzpicture}[scale=0.65]
   [
    > = stealth, % arrow head style
    shorten > = 1pt, % don't touch arrow head to node
    auto,
    node distance = 2cm, % distance between nodes
    thick, dashed pattern=on % line style
    ]

    \tikzset{every state}=[
    draw = black,
    thick,
    fill = white,
    minimum size = 1mm
    ]
    \node[circle,draw=black, fill=black,scale=0.5, label = below:\small{$v$}](1) at (0,0){};
    \path (1) ++(36:2) node[circle,draw=black, fill=black,scale=0.5, label = above:\small{$v_1$}](2) {};
    \path (1) ++(108:2) node[circle,draw=black, fill=black,scale=0.5, label = right:\small{$v_2$}](3) {};
    \path (1) ++(180:2) node[circle,draw=black, fill=black,scale=0.5, label = below right:\small{$w$}](4) {};
    \path (1) ++(252:2) node[circle,draw=black, fill=black,scale=0.5, label = left:\small{$v_3$}](5) {};
    \path (1) ++(324:2) node[circle,draw=black, fill=black,scale=0.5, label = below:\small{$u$}](6) {};
    \path (6) ++(324:2) node[circle,draw=black, fill=black,scale=0.5, label = below:\small{$u_2$}](16) {};

    \path (2) ++(36:2) node[circle,draw=black, fill=black,scale=0.5, label = above:\small{$v_1'$}](7) {};
    \path (3) ++(108:2) node[circle,draw=black, fill=black,scale=0.5, label = right:\small{$v_2'$}](8) {};
    
    \path (4) ++(108:2) node[circle,draw=black, fill=black,scale=0.5, label = left:\small{$w_1$}](10) {};
    \path (6) ++(252:2) node[circle,draw=black, fill=black,scale=0.5, label = below:\small{$u_3$}](14) {};
    \path (6) ++(36:2) node[circle,draw=black, fill=black,scale=0.5, label = right:\small{$u_1$}](15) {};
  
    \path [-,ultra thin] (1) edge node[left] {} (2);
    \path [-,ultra thin] (1) edge node[left] {} (4);
    \path [-,ultra thin] (1) edge node[left] {} (5);
    \path [-,ultra thin] (1) edge node[left] {} (6);

    \path [-,ultra thin] (2) edge node[left] {} (7);
    \path [-,ultra thin] (8) edge node[left] {} (10);
    \path [-,ultra thin] (6) edge node[left] {} (15);
    \path [-,ultra thin] (15) edge node[left] {} (7);
    \path [-, thin,draw, dotted, color=black] (4) edge node[left] {} (10);
    \path [-, thin,draw, dotted, color=black] (1) edge node[left] {} (3);
    \path [-, thin,draw, dotted, color=black] (3) edge node[left] {} (8);
    \path [-, thin,draw, dotted, color=black] (6) edge node[left] {} (16);
     \path [-, thin,draw, dotted, color=black] (6) edge node[left] {} (14);

%\node[shape=circle,draw=black,label=right:$2$] (4) at (0,0) {4};
%    \node[shape=circle,draw=black, label=right:$5.75$] (3) at (4.5,1.125) {3};
 %   \node[shape=circle,draw=black, label=right:$4.25$] (2) at (3,6) {2};
  %  \node[shape=circle,draw=black, label=above:$7.75$] (1) at (0,6) {1};

  %\path [-] (1) edge node[left] {} (2);
   % \path [-](2) edge node[left] {} (1);
    %\path [-](1) edge node[left] {} (4);
    %\path [-,draw, dotted, color=red](4) edge node[left] {} (2);
    %\path [-,draw, dotted, color=red](2) edge node[right] {} (3);
    %\path [-,draw, dotted, color=red](3) edge node[left] {} (1);

    \end{tikzpicture}

    \caption{Configuration $\ref{girth5confi}$}
    \label{configuration i}
   \end{minipage}\hfill
   \begin{minipage}{0.48\textwidth}
      \centering
    \begin{tikzpicture}[scale=0.55]
   [
    > = stealth, % arrow head style
    shorten > = 1pt, % don't touch arrow head to node
    auto,
    node distance = 2cm, % distance between nodes
    thick, dashed pattern=on % line style
    ]

    \tikzset{every state}=[
    draw = black,
    thick,
    fill = white,
    minimum size = 1mm
    ]
    \node[circle,draw=black, fill=black,scale=0.5, label = below :\small{$v$}](1) at (0,0){};
    \path (1) ++(36:2) node[circle,draw=black, fill=black,scale=0.5, label = below:\small{$v_1$}](2) {};
    \path (1) ++(108:2) node[circle,draw=black, fill=black,scale=0.5, label = right:\small{$v_3$}](3) {};
    \path (1) ++(180:2) node[circle,draw=black, fill=black,scale=0.5, label = below right:\small{$w$}](4) {};
    \path (1) ++(252:2) node[circle,draw=black, fill=black,scale=0.5, label = right:\small{$v_2$}](5) {};
    \path (1) ++(324:2) node[circle,draw=black, fill=black,scale=0.5, label = above:\small{$u$}](6) {};

    \path (2) ++(36:2) node[circle,draw=black, fill=black,scale=0.5, label = right:\small{$v_1'$}](7) {};
    \path (3) ++(108:2) node[circle,draw=black, fill=black,scale=0.5, label = left:\small{$v_3'$}](8) {};

    \path (4) ++(180:2) node[circle,draw=black, fill=black,scale=0.5, label = below:\small{$w_1$}](11) {};
    \path (11) ++(180:2) node[circle,draw=black, fill=black,scale=0.5](30) {};

    \path (5) ++(252:2) node[circle,draw=black, fill=black,scale=0.5, label =above left:\small{$v_2'$}](13) {};

    \path (6) ++(252:2) node[circle,draw=black, fill=black,scale=0.5, label = below:\small{$u_2$}](14) {};
    \path (6) ++(36:2) node[circle,draw=black, fill=black,scale=0.5, label = below:\small{$u_1$}](15) {};
    
    \path (1) ++(72:5) node[circle,draw=black, fill=black,scale=0.5, label = above:\small{$p$}](9) {};

    \path [-,ultra thin] (1) edge node[left] {} (3);
    \path [-,ultra thin] (1) edge node[left] {} (4);
    \path [-,ultra thin] (1) edge node[left] {} (5);
    \path [-,ultra thin] (1) edge node[left] {} (6);

    \path [-,ultra thin] (2) edge node[left] {} (7);
    \path [-,ultra thin] (3) edge node[left] {} (8);   
    \path [-,ultra thin] (8) edge node[left] {} (9);
    \path [-,ultra thin] (7) edge node[left] {} (9);
    \path [-,ultra thin] (5) edge node[left] {} (13);
    \path [-,ultra thin] (13) edge node[left] {} (14);
    \path [-,ultra thin] (6) edge node[left] {} (14);

    \path [-, thin,draw, dotted, color=black] (11) edge node[left] {} (30);
    \path [-, thin,draw, dotted, color=black] (4) edge node[left] {} (11);
    \path [-, thin,draw, dotted, color=black] (15) edge node[left] {} (7);
    \path [-, thin,draw, dotted, color=black] (15) edge node[left] {} (6);
    \path [-, thin,draw, dotted, color=black] (1) edge node[left] {} (2);

%\node[shape=circle,draw=black,label=right:$2$] (4) at (0,0) {4};
%    \node[shape=circle,draw=black, label=right:$5.75$] (3) at (4.5,1.125) {3};
 %   \node[shape=circle,draw=black, label=right:$4.25$] (2) at (3,6) {2};
  %  \node[shape=circle,draw=black, label=above:$7.75$] (1) at (0,6) {1};

  %\path [-] (1) edge node[left] {} (2);
   % \path [-](2) edge node[left] {} (1);
    %\path [-](1) edge node[left] {} (4);
    %\path [-,draw, dotted, color=red](4) edge node[left] {} (2);
    %\path [-,draw, dotted, color=red](2) edge node[right] {} (3);
    %\path [-,draw, dotted, color=red](3) edge node[left] {} (1);

    \end{tikzpicture}
    \caption{Configuration $\ref{girth5confj}$}
    \label{configuration j}
   \end{minipage}
\end{figure}

\noindent \textbf{Configuration $(j)$.} Suppose $G$ contains configuration $(j)$. Let $v\in S(5,3)$, $v_i\in N(v)\cap S_2$ with $v_i'\in N(v_i)\backslash \{v\}$ for $i=1,2,3$, $u,w\in N(v)\cap S_{5^+}$ with $u_1,u_2\in N(u)\cap S_2$ and $w_1\in N(w)\cap S_2$ such that $\{u,w\}$ do not share a common face. Furthermore, suppose $\{v,u,u_1,v_1\}$ and $\{v,u,u_2,v_2\}$ each lie on a common $5$-face for distinct $v_1,v_2$ and the face incident to two $2$-neighbors of $v$ is a $6$-face containing three $2$-vertices, $v_1,v_3$ and $p$ where $p\in N(v_1')\cap N(v_3') $ (Figure \ref{configuration j}). Consider $B=\{w_1x|x\in N(w_1)\}\cup \{u_1x|x\in N(u_1)\}\cup \{vv_1\}$. Then $|B|=5$. Clearly, $u_1,w_1\in I'$. By Claim 1, $N(u_1)\cap I'=\emptyset$ and $N(w_1)\cap I'=\emptyset$. It follows that $v_1\in I'$. 

If $v_3\in I'$, then $p\in I'$. In particular, $p$ dominates $v_3'$. If $v_3\notin I'$, then $v_3'\in I'$. Thus, $v_3'$ is always dominated by some vertex besides $v_3$ in $I'$. Similarly, if $v_2\in I'$, then $u_2\in I'$. If $v_2\notin I'$, then $v_2'\in I'$. Thus, $v_2'$ is always dominated by some vertex besides $v_2$ in $I'$. Note that, $\{v_2,v_3\}\cap I'\neq \emptyset$ in order to dominate $v$, as $\{u,v\} \not \in I'$. Thus, $(I'\backslash N_G(v))\cup \{v\}$ is an independent dominating set of $G$ with size at most $|I'|-2+1<|I'|\leq |I|$, a contradiction.\\

Therefore, for each configuration, there exists an edge set with size at most $5$ such that its removal results in a subgraph $G'$ with strictly larger independent domination number than $G$. Hence, $b_i(G)\leq 5$ for all planar graphs with $\delta(G)\geq 2$ and $g(G)\geq 5$.

\end{proof}

%%%%other girth conditions
\section{Graphs with $\delta(G)\geq 3$ and $g(G)\geq 4$}
\label{section3}

\begin{theorem}
\label{girth4mindegree3configurations}
Every connected planar graph G with $\delta(G) \geq 3$ and $g(G)\geq 4$ contains at least one of the following configurations:
\begin{enumerate}
   \item[(a)] \label{girth4confa} $(3,4)$-edge.
   \item[(b)] \label{girth4confb} a $5$-vertex $v$ such that $|N(v)\cap S_3|= 4$, $d(u) \leq 5$ for some $u \in N(v)\setminus S_3$, and the vertex $v$ is incident only with $4$-faces.
   \item[(c)]\label{girth4confc} a $5$-vertex $v$ with $|N(v)\cap S_3|= 5$ and incident to four $4$-faces and one $5^+$-face.
\end{enumerate}
\end{theorem}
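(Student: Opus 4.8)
The plan is to argue by contradiction using the discharging method with the \emph{balanced charging} of Proposition \ref{dischargingequations}: assign each vertex $v$ the charge $d(v)-4$ and each face $f$ the charge $\ell(f)-4$, so that the total charge is $-8$. Since $g(G)\geq 4$ there are no $3$-faces, so every face has charge at least $0$, a $4$-face having charge exactly $0$; and since $\delta(G)\geq 3$ the only objects with negative initial charge are the $3$-vertices, each carrying $-1$. Reading configuration (a) as the presence of an edge joining a $3$-vertex to a $4^-$-vertex, its absence forces every $3$-vertex to have all three of its neighbours in $S_{5^+}$. The goal is to redistribute charge so that every vertex and face ends non-negative, contradicting the total $-8$.

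The first rule is (R1): every $3$-vertex takes $\frac13$ from each of its three neighbours. A $3$-vertex then ends at $-1+3\cdot\frac13=0$, and because its neighbours are all $5^+$-vertices this charge is genuinely available. A $4$-vertex is never touched and stays at $0$, and for $d\geq 6$ a $d$-vertex loses at most $\frac{d}{3}$ and ends with at least $(d-4)-\frac d3=\frac{2(d-6)}{3}\geq 0$; faces are untouched, with $4$-faces at $0$ and $5^+$-faces positive. Hence after (R1) the only possibly-negative objects are $5$-vertices, and a $5$-vertex with $k$ three-neighbours ends at $1-\frac k3$, which is negative exactly when $k\in\{4,5\}$, that is for $v\in S(5,4)\cup S(5,5)$. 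These are precisely the vertices that configurations (b) and (c) are designed to address.

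To feed the deficient $5$-vertices I would add (R2): every $5^+$-face sends $\frac13$ to each incident $5$-vertex with at least four $3$-neighbours; and (R3): every $6^+$-vertex sends $\frac13$ to each adjacent member of $S(5,4)$. A vertex in $S(5,4)$ needs only $\frac13$: its fifth neighbour has degree at least $4$, and avoiding configuration (b) guarantees either an incident $5^+$-face (supplying $\frac13$ by (R2)) or a $6^+$-neighbour (supplying $\frac13$ by (R3)). A vertex in $S(5,5)$ has all five neighbours of degree $3$, so (R3) cannot help it and it must collect $\frac23$ from incident $5^+$-faces; avoiding configuration (c) rules out its having exactly one such face, so in the principal case it has at least two, yielding $2\cdot\frac13=\frac23$. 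To keep the donors non-negative I would prove a short sublemma: a $5$-face is incident with at most three vertices of $S(5,4)\cup S(5,5)$, since each such vertex has at most one non-$3$-neighbour, forcing at least one of its two face-neighbours to be a $3$-vertex, and a cyclic count on the $5$-cycle then caps the number at three; thus a $5$-face gives at most $3\cdot\frac13=1$ and ends at $\geq 0$, an $\ell$-face with $\ell\geq 6$ gives at most $\ell\cdot\frac13$ and ends at $\geq\frac{2(\ell-6)}{3}\geq 0$, and the $\frac{2(d-6)}{3}$ bound already absorbs (R3) for $6^+$-vertices because each neighbour receives at most $\frac13$ in total.

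The single case left open, and the \textbf{main obstacle}, is a vertex $v\in S(5,5)$ incident with no $5^+$-face at all, i.e.\ lying on five $4$-faces: this avoids configuration (c), which demands \emph{exactly} one $5^+$-face, yet receives nothing from (R2) or (R3). The hard part of the proof is therefore to eliminate this degenerate configuration. Here I would exploit that the five $4$-faces around $v$ force each consecutive pair of its $3$-neighbours $a_i,a_{i+1}$ to share a second common neighbour $x_i\neq v$, fixing each $a_i$ as a degree-$3$ vertex with neighbourhood $\{v,x_{i-1},x_i\}$ and, by the absence of configuration (a), each $x_i\in S_{5^+}$. I would then show that this forced gadget either creates one of configurations (a)--(c) at some $x_i$ (for instance an overloaded $5$-vertex among them) or cannot be completed to a girth-$4$ planar graph without doing so; failing an outright impossibility, I would supplement the rules with a dedicated transfer from the $x_i$ across the $4$-faces to $v$, paid for by the surplus of the $x_i$. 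Verifying that this final transfer does not spoil the estimates of the previous paragraph, together with confirming that the rules are tight enough that (b) and (c) are the only unavoidable deficiencies, is where I expect the genuine difficulty to lie.
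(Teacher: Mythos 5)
Your discharging scheme is essentially the paper's: the same balanced charging, the same rule sending $\frac13$ from each neighbour to each $3$-vertex, the same $\frac13$ transfers from $5^+$-faces to $5$-vertices with at least four $3$-neighbours and from $6^+$-vertices to their $S(5,4)$-neighbours, and the same counting bound (at most $\lceil \ell(f)/2\rceil$ needy $5$-vertices per face, since each such vertex has at most one non-$3$-neighbour) to keep the donors non-negative. Every case you do close is closed correctly and in the same way the paper closes it.

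The case you flag as the main obstacle --- a vertex $v\in S(5,5)$ all five of whose incident faces are $4$-faces --- is a genuine gap: configuration (c) as stated excludes only the face profile ``four $4$-faces and one $5^+$-face,'' so its absence does not forbid the all-$4$-faces profile, and such a $v$ ends at $1-\frac53=-\frac23$ with no donor available under any of the rules. You should know, however, that the paper's own proof does not resolve this either: it asserts that a $5$-vertex with five $3$-neighbours ``must have at least two incident $5^+$-faces by configuration (c),'' which does not follow from (c) as written. Your forced gadget (the vertices $x_i\in N(u_i)\cap N(u_{i+1})$, each necessarily of degree at least $5$ once (a) is excluded) is locally consistent with planarity and girth $4$, so no cheap contradiction is available there; the honest fix is to restate configuration (c) as ``a $5$-vertex with five $3$-neighbours and at most one incident $5^+$-face'' (equivalently, at least four incident $4$-faces), after which both your discharging and the paper's close immediately, at the cost of checking the degenerate sub-case $x=y$ in the reducibility argument for (c) in Theorem \ref{girth4mindegree3IBNproof}. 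In short: same approach as the paper, and the one case you leave unfinished is precisely the point at which the paper's argument is also incomplete.
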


\begin{proof}
Let $G$ be a non-trivial connected planar graph with $\delta(G) \geq 3$ and $g(G)\geq 4$ without any of the given configurations. We will proceed by balanced charging. Every face $f$ begins with charge $\ell(f)-4$, and every vertex begins with charge $d(v)-4$. Thus, only $3^-$-vertices start with an initial negative charge. We proceed with the following discharging rules:

\begin{itemize}
    \item[(R1)] Every $3$-vertex takes $ \frac{1}{3}$ from each neighbor.
     \item[(R2)] Each $5^+$-face $f$ distributes $\frac{\ell(f) - 4}{k}$ to each of its $k$ incident $5$-vertices that have at least four $3$-neighbors.
    \item[(R3)] Every $6^+$-vertices $v$ distributes $\frac{d(v) - 4-\frac{1}{3}|N(v)\cap S_3|}{k}$ to each of its $k$ $5$-neighbors that have four $3$-neighbors and incident only to $4$-faces.
\end{itemize}

We begin by verifying that all vertices end with a non-negative charge. Let $v$ be a $j$-vertex for $j\geq 3$. For $j=3$, $v$ has initial charge $3-4=-1$. It is sufficient to show that $v$ gains at least a charge of $1$. By $(R1)$, $v$ receives a charge of at least $3 (\frac{1}{3}) = 1$ from its neighbors. Thus, $3$-vertices end with a non-negative charge. For $j=4$, $v$ has initial charge $4-4=0$. By configuration $(a)$, $v$ has no $3$-neighbors. Thus, none of the discharging rules involves $4$-vertices. Hence, $4$-vertices end with a non-negative charge. For $j\geq 6$, $v$ gives $\frac{1}{3}$ to each of its $3$-neighbors by $(R1)$, and then distributes its remaining charge equally among its $k$ $5$-neighbors as specified in $(R3)$. Hence, $v$ ends with a final charge of
$$(d(v)-4)-\frac{1}{3}|N(v)\cap S_3|-\left(\frac{d(v) - 4-\frac{1}{3}|N(v)\cap S_3|}{k}\right)k=0,$$
and remains non-negative. 

%\begin{fact}
%\label{factgirth4mindegree3}
% A $6^+$-vertex gives at least $\frac{1}{3}$ to its eligible $5$-neighbors as described in $(R3)$.  
%\end{fact} 

%\begin{proof}
% Note that $\frac{d(v) - 4}{k} \geq \frac{2}{k} \geq \frac{1}{3}$, since $k \leq d(v)$.   
%\end{proof}
%Let $j \geq 6$. Then, $v$ distributes its remaining charge to its neighbors as specified in $(R3)$ and $(R1)$. By Fact \ref{factgirth4mindegree3}, $v$ can give at least $\frac{1}{3}$ to each of its $3$-neighbors and to the $5$-neighbors as described in $(R3)$. Therefore, while providing sufficient charge to its $3$-neighbors as per $(R1)$ and to its $5$-neighbors as described in $(R3)$, $v$ remains non-negative.\\

Before verifying the final charge of $5$-vertices, we prove the following two facts. 
\begin{fact}
\label{6vertexcharge}
Every $6^+$-vertices  gives at least $\frac{1}{3}$ to its $5$-neighbors as described in $(R3)$.
\end{fact}

We have that $0\leq k +|N(v)\cap S_3|\leq d(v)$. As the number of $3$-neighbors increases, the number of relevant $5$-neighbors must either stay constant or decrease, increasing the amount of charge each $5$-neighbor receives overall. Thus, when $|N(v)\cap S_3|=0$, $v$ distributes at least $\frac{d(v)-4}{k}\geq \frac{6-4}{6}=\frac{1}{3}$ to each of its relevant $k$ $5$-neighbors. 

\begin{fact}
\label{5facecharge}
Every $5^+$-face $f$ gives at least $\frac{1}{3}$ to its $k$ incident $5$-vertices as described in $(R2)$.
\end{fact}
By $(R2)$, $f$ distributes charge only to its $k$ incident $5$-vertices that also have at least four $3$-neighbors. Given such a $5$-vertex, say $u$, two of its neighbors must be incident to $f$. It follows that $u$ has at least one $3$-neighbor incident to $f$. The other neighbor of $v$ that is also incident to $f$ must be either a $3$-vertex or a $4^+$-vertex, possibly another degree-seeking $5$-vertex. Hence, $f$ may only contain at most $\lceil\frac{\ell(f)}{2}\rceil$ such incident $5$-vertices; that is, $k\leq \lceil\frac{\ell(f)}{2}\rceil$. Thus when $\ell(f)\geq 5$, $f$ distributes charge at least

$$\frac{\ell(f)-4}{k}\geq \frac{\ell(f)-4}{\lceil\frac{\ell(f)}{2}\rceil}\geq \frac{1}{3}.$$

We now proceed with verifying the final charge of $5$-vertices. Let $j=d(v)=5$. If $|N(v)\cap S_3|=5$, then $v$ must have at least two incident $5^+$-faces by configuration $(c)$. By Fact \ref{5facecharge}, $v$ receives $\frac{1}{3}$ from each $5^+$-face. Thus, $v$ ends with final charge at least $(5-4)-5\left(\frac{1}{3}\right)+2\left(\frac{1}{3}\right)=0$.

If $|N(v)\cap S_3|=4$, then $v$ must have either at least one incident $5^+$-face or a $6+$-neighbor by configuration $(b)$. By Facts \ref{5facecharge} and \ref{6vertexcharge}, $v$ will receive at least $\frac{1}{3}$ from either structure. Thus, $v$ ends with a final charge of at least $(5-4)-4\left(\frac{1}{3}\right)+\frac{1}{3}=0$.

If $|N(v)\cap S_3|\leq 3$, then $v$ ends with final charge $(5-4)-3\left(\frac{1}{3}\right)=0$. Thus, all $5$-vertices, and consequently all vertices, end with a nonnegative charge. 

By our discharging rule $(R2)$, each face $f$ of $G$ takes its initial charge and distributes it equally among its $k$ incident $5$-vertices that have at least four $3$-neighbors. As our remaining rules do not involve $f$ giving any charge, every face of $G$ ends with a nonnegative charge. 
\end{proof}

Using our new configurations, we are now able to determine an upper bound for the independent bondage number for planar graphs with minimum degree at least 3 and the girth is at least $4$.

\begin{theorem}
\label{girth4mindegree3IBNproof}
Let  $G$ be a connected planar graph with $\delta(G) \geq 3$. If $g(G) \geq 4$, then $b_{i}(G) \leq 6$.
\end{theorem}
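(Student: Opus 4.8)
The plan is to combine the structural dichotomy of Theorem \ref{girth4mindegree3configurations} with the edge-deletion technique used in the proof of Theorem \ref{girth5configurations}. Since every such $G$ contains at least one of configurations (a), (b), (c), it suffices to exhibit, for each configuration, an edge set $B \subseteq E(G)$ with $|B| \le 6$ for which $\gamma_i(G-B) > \gamma_i(G)$. As before, I would argue by contradiction: fix minimum independent dominating sets $I$ of $G$ and $I'$ of $G' = G - B$, assume $|I'| \le |I|$, and manufacture from $I'$ an independent dominating set of $G$ of size strictly less than $|I'|$, contradicting the minimality of $I$. First I would re-establish the two bookkeeping tools used earlier: the analogue of Claim~1 (for $x \in I'$ we may assume $N_G(x) \cap I' = \emptyset$, so that transplanting $I'$ into $G$ preserves independence) and the analogue of Claim~2 (a neighbor cut off from the configuration is either self-dominated or has an alternative dominator in $I'$), since these are exactly what let us replace a block of $I'$ around a vertex by that vertex.

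Configuration (a) is immediate. If $uv$ is a $(3,4)$-edge then, because $g(G) \ge 4$ forbids triangles, $N_G(u) \cap N_G(v) = \emptyset$, so Theorem \ref{PriddyWei} gives $b_i(G) \le d(u) + d(v) - 0 - 1 = 6$. Thus the whole difficulty lies in configurations (b) and (c), each of which features a $5$-vertex $v$ almost all of whose neighbors are $3$-vertices, with $v$ sitting on $4$-faces. Here the $(5,3)$-edges are useless for Theorem \ref{PriddyWei} (again by the absence of triangles such an edge only yields the bound $7$), so an explicit construction is required. The target of the construction is the familiar one: force at least two neighbors of $v$ into $I'$ while keeping $v \notin I'$, so that $(I' \setminus N_G(v)) \cup \{v\}$ removes at least two vertices and re-adds only one. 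I would exploit the $4$-face structure, in which consecutive neighbors of $v$ share an ``opposite corner'' $w_i$; since each $3$-neighbor of $v$ has only three incident edges, deleting the two non-$v$ edges of two or three of these neighbors (which fits inside the budget of $6$) turns them into pendants hanging off $v$, and whenever $v \notin I'$ such pendants are forced into $I'$. The condition $d(u) \le 5$ in (b) and the single $5^+$-face in (c) are then used to control the behavior of the last, higher-degree neighbor.

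The main obstacle, and the reason the girth-$4$ case is genuinely harder than the girth-$5$ case, is the far-neighbor bookkeeping under a tight edge budget. In Theorem \ref{girth5configurations} each relevant neighbor of $v$ had degree $2$, hence a single outside neighbor, so the Claim~2 mechanism closed cleanly; here a $3$-neighbor of $v$ retains two outside neighbors unless we spend an edge to sever one, and with only $6$ edges we cannot reduce all five neighbors of $v$ and still force membership. I therefore expect the heart of the proof to be a case analysis on which neighbors of $v$ and which shared corners $w_i$ actually lie in $I'$. The ``good'' case is the one in which every removed neighbor of $v$ has its outside neighbors dominated by other members of $I'$, so that $(I' \setminus N_G(v)) \cup \{v\}$ is immediately a smaller independent dominating set; the delicate case is the one in which $v$ together with all the $w_i$ avoids $I'$, where instead one must relocate the argument through the $5^+$-face of (c) or through the bounded-degree vertex $u$ of (b) to recover a dominator and still reach size $< |I'|$. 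Verifying that a single deletion set of size at most $6$ forces one of these cases to terminate in a strictly smaller independent dominating set of $G$ is the crux; the surrounding steps are the same routine independence and domination checks as in the proof of Theorem \ref{girth5configurations}.
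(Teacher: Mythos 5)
Your overall architecture is the same as the paper's: dispose of configuration (a) via Theorem \ref{PriddyWei} (using triangle-freeness to kill the $|N(u)\cap N(v)|$ term), and for configurations (b) and (c) build an explicit $B$ with $|B|\le 6$ that forces at least two neighbors of $v$ into $I'$ while excluding $v$, so that $(I'\setminus N_G(v))\cup\{v\}$ beats $I$. You also correctly identify the genuine difficulty, namely that a $3$-neighbor of $v$ retains two outside neighbors and the budget of $6$ does not let you neutralize all five neighbors. But you stop exactly there: you never exhibit the edge set $B$ for (b) or (c), and the one construction you do gesture at does not close. Making two or three of the $3$-neighbors into pendants on $v$ costs $4$ or $6$ edges, and, as you yourself note, a pendant is forced into $I'$ only \emph{conditionally on} $v\notin I'$. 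You give no mechanism that rules out $v\in I'$; if $v\in I'$ the pendants are all dominated by $v$, nothing is forced into $I'$, and the swap produces no saving. With three pendants the budget is exhausted, and with two pendants the remaining two edges cannot isolate a $3$-vertex, so the conditional cannot be discharged within your own arithmetic.

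The paper's device is different and is the missing idea: it spends three edges to \emph{fully isolate} one $3$-neighbor $u_1$, which forces $u_1\in I'$ unconditionally and hence (by the Claim~1 mechanism) $v\notin I'$, and only then spends the remaining three edges. For configuration (b) the set is $B=\{u_1w: w\in N(u_1)\}\cup\{u_3z_2,u_3z_3,u_5x\}$ (one pendant plus one extra edge at $u_5$); for configuration (c) it is $B=\{u_1w: w\in N(u_1)\}\cup\{u_2z_2,u_3z_3,u_4z_4\}$, which creates no pendants at all --- each $u_i$ keeps its neighbor $z_{i-1}$, and membership of $u_2,u_3,u_4$ in $I'$ is deduced from independence constraints propagating around the $4$-faces, with a final case split on whether $u_5\in I'$ and a relocation through the vertices $y$, $z_4$, $z_5$ on the $5^+$-face. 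That case analysis is the part you flagged as ``the crux'' and deferred; since both the choice of $B$ and the subsequent bookkeeping are nontrivial and configuration-specific (the two configurations use structurally different sets), the proposal as written has a genuine gap rather than a routine verification left to the reader.
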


\begin{proof}
Let $G$ be a planar graph with $\delta(G)\geq 3$ and $g(G) \geq 4$. Then $G$ must contain one of the configurations listed in Theorem \ref{girth4mindegree3configurations}. If $G$ contains configuration $(a)$, then $b_i(G)\leq 4+3-1=6$ by Theorem \ref{PriddyWei}. Hence, we may assume $G$ contains either configuration $(b)$ or configuration $(c)$. Let $v$ be a $5$-vertex with $N(v)=\{u_1,u_2,u_3,u_4,u_5\}$.\\

\noindent\textbf{Configuration $(b)$.} Suppose $G$ contains configuration $(b)$ where $3\leq d(u_5)\leq 5$ and $d(u_i)=3$ for $1\leq i \leq 4$. As $v$ is incident only to $4$-faces, there exist distinct $z_i\in N(u_i)\cap N(u_{i+1})$ for $1\leq i \leq 4$ and $z_5\in N(u_5)\cap N(u_1)$. If they exist, let $x,y\in N(u_5)\backslash \{v,z_4,z_5\} $ (Figure \ref{fig:my_label}). Consider the edge set $B=\{u_1w:w\in N(u_1)\}\cup\{u_3z_2, u_3z_3, u_5x\}$. Then $|B|\leq6$. Let $I$ and $I'$ be minimum independent dominating sets of $G$ and $G'=G-B$ respectively. We claim $|I|<|I'|$. Suppose not, that is, $|I|\geq |I'|$. It is clear that $u_1\in I'$. Hence, $N(u_1)\cap I'=\emptyset$. Otherwise,  $I'\backslash \{u_1\}$ is an independent dominating set of $G$ with size $|I'|-1<|I'|\leq |I|$, a contradiction. It follows that $u_3\in I'$. Furthermore, $u_2\in I'$ as $N(u_3)\cap I'=\emptyset$. We prove the following fact.

\begin{fact}
\label{factbondagegirth4}
$[N_{G'}(z_i)\backslash \{u_{2}\}] \cap I' \neq \emptyset$ for $1\leq i \leq 2$.
\end{fact}

Suppose not for $i=1$, that is $[N_{G'}(z_1)\backslash \{u_2\}] \cap I' = \emptyset$. Then $[I'\backslash \{u_1,u_2\}] \cup \{z_1\}$ is an independent dominating set of $G$ with size $|I'|-2+1<|I'|\leq |I|$, a contradiction. By an analogous argument, $[N_{G'}(z_2)\backslash \{u_1\}] \cap I' \neq \emptyset$.\\

Now, we continue the main proof. Suppose $z_4\in I'$. It follows that $[N_{G'}(z_3)\backslash u_4]\cap I'\neq \emptyset$ to dominate $z_3$ in $G'$ as $z_3,u_4\not\in I'$. Hence, $I'\backslash \{u_1,u_2,u_3\}\cup \{v\}$ is an independent dominating set of $G$ with size $|I'|-3+1<|I'|\leq |I|$, a contradiction. Thus, we may assume $z_4\not\in I'$. It must be that $u_4\in I'$ to dominate $u_4$ as $z_3\not\in I'$. We consider two cases:

\textbf{Case 1:} $u_5\in I'$.

Using a similar argument as seen in Fact \ref{factbondagegirth4}, we have that $[N_{G'}(z_3)\backslash u_4]\cap I'\neq \emptyset$. Consider $I''=[I'\backslash\{u_i: 1\leq i \leq 5\}]\cup \{v\}$. If $[N_{G'}(z_4)\backslash \{u_4,u_5\}]\cap I'=\emptyset$, then we add $z_4$ to $I''$. If $[N_{G'}(y)\backslash \{u_5\}]\cap I'=\emptyset$, then we add $y$ to $I''$. If $[N_{G'}(z_5)\backslash \{u_5\}]\cap I'=\emptyset$, then we add $z_5$ to $I''$. Thus, $I''$ is an independent dominating set of $G$ with size at most $|I''|\leq |I'|-5+4<|I'|\leq |I|$, a contradiction.

\textbf{Case 2:} $u_5\not\in I'$.

Then, $y\in I'$ and $[N_{G'}(z_5)\backslash \{u_4\}]\cap I\neq \emptyset$. Otherwise, $I'$ is not a dominating set. Consider $I''=[I'\backslash\{u_i: 1\leq i \leq 4\}]\cup \{v\}$. For $3\leq i \leq 4$, if $[N_{G'}(z_i)\backslash \{u_4\}\cap I'=\emptyset$, then we add $z_i$ to $I''$. Thus, $I''$ is an independent dominating set of $G$ with size at most $|I''|\leq |I'|-4+3<|I'|\leq |I|$, a contradiction.

Therefore, if $G$ contains configuration $(b)$, $b_i(G)\leq 6$.\\

\noindent \textbf{Configuration $(c)$.} Suppose $G$ contains configuration $(c)$ where $d(u_i)=3$ for $1\leq i \leq 5$. Let $f_i$ be the $4$-face incident to $\{v, u_i, z_i,u_{i+1}\}$ for $1\leq i \leq 4$ and $f_5$ be the remaining $5^+$-face containing $\{v,u_1,x,y,u_5\}$ where $x\in N(u_1)\cap v(f_5)$ and $y\in N(u_5)\cap v(f_5)$ (Figure \ref{fig:my_label5cycle}). 

Consider the edge set $B=\{u_1w:w\in N(u_1)\}\cup\{u_iz_i: 2\leq i \leq 4\}$. Then $|B|=6$. Let $I$ and $I'$ be minimum independent dominating sets of $G$ and $G'=G-B$ respectively. We claim $|I|<|I'|$. Suppose not, that is, $|I|\geq |I'|$. It is clear that $u_1\in I'$. It follows that $u_2, u_3, u_4\in I'$. Using the same argument as seen in Fact \ref{factbondagegirth4}, we have $[N_{G'}(z_i)\backslash\{u_{i+1}\}]\cap I'\neq \emptyset$ for $1\leq i \leq 3$. We consider two cases:

\textbf{Case 1: } $u_5\in I'$

Consider $I''=[I'\backslash\{u_i: 1\leq i \leq 5\}]\cup \{v\}$. If $[N_{G'}(z_4)\backslash \{u_5\}]\cap I'=\emptyset$, we add $z_4$ to $I''$. Similarly, if $[N_{G'}(y)\backslash \{u_5\}]\cap I'=\emptyset$, we add $y$ to $I''$. It follows that $I''$ is an independent dominating set of $G$ with size at most $|I''|\leq |I'|-5+2<|I'|\leq |I|$, a contradiction.

\textbf{Case 2: } $u_5\not\in I'$.

Then it must be that $y\in I'$. It follows that $N_{G'}(z_4)\cap I'\neq\emptyset$ as $I'$ is dominating set. Hence, $[I'\backslash\{u_i: 1\leq i \leq 4\}]\cup \{v\}$ is an independent dominating set of $G$ with size $|I'|-4+1<|I'|\leq |I|$, a contradiction.

Thus, if $G$ contains configuration $(c)$, $b_i(G)\leq 6$, completing all listed configurations of $G$.
\end{proof}

\begin{figure}[!htb]
    \centering
    \begin{tikzpicture}[scale=.7]
   [
   > = stealth, % arrow head style
    shorten > = 1pt, % don't touch arrow head to node
    auto,
    node distance = 2cm, % distance between nodes
    thick, dashed pattern=on % line style
    ]

    \tikzset{every state}=[
    draw = black,
    thick,
    fill = white,
    minimum size = 1mm
    ]
    \node[circle,draw=black, fill=black,scale=0.5, label = below:\small{$v$}](1) at (0,0){};
    \path (1) ++(0:2) node[circle,draw=black, fill=black,scale=0.5, label = right:\small{$u_1$}](2) {};
    \path (1) ++(72:2) node[circle,draw=black, fill=black,scale=0.5, label = above:\small{$u_2$}](3) {};
    \path (1) ++(144:2) node[circle,draw=black, fill=black,scale=0.5, label = above left:\small{$u_3$}](4) {};
    \path (1) ++(216:2) node[circle,draw=black, fill=black,scale=0.5, label = left:\small{$u_4$}](5) {};
    \path (1) ++(288:2) node[circle,draw=black, fill=black,scale=0.5, label = above right:\small{$u_5$}](6) {};
    \path (2) ++(72:2) node[circle,draw=black, fill=black,scale=0.5, label = below right:\small{$z_1$}](7) {};
    \path (3) ++(144:2) node[circle,draw=black, fill=black,scale=0.5, label = right:\small{$z_2$}](8) {};
    \path (4) ++(216:2) node[circle,draw=black, fill=black,scale=0.5, label = above:\small{$z_3$}](9) {};
    \path (5) ++(288:2) node[circle,draw=black, fill=black,scale=0.5, label = left:\small{$z_4$}](10) {};
    \path (6) ++(360:2) node[circle,draw=black, fill=black,scale=0.5, label = right:\small{$z_5$}](11) {};
    \path (6) ++(275:2) node[circle,draw=black, fill=black,scale=0.5, label = below:\small{$x$}](12) {};
    \path (6) ++(300:2) node[circle,draw=black, fill=black,scale=0.5, label = below:\small{$y$}](13) {};
  %  \path (7) ++(21:2) node[circle,draw=black, fill=black,scale=0.5](14) {};
   % \path (7) ++(36:2) node[circle,draw=black, fill=black,scale=0.5](15) {};
   % \path (7) ++(51:2) node[circle,draw=black, fill=black,scale=0.5](16) {};
   % \path (8) ++(93:2) node[circle,draw=black, fill=black,scale=0.5](17) {};
   % \path (8) ++(108:2) node[circle,draw=black, fill=black,scale=0.5](18) {};
   % \path (8) ++(123:2) node[circle,draw=black, fill=black,scale=0.5](19) {};
   % \path (9) ++(165:2) node[circle,draw=black, fill=black,scale=0.5](20) {};
   % \path (9) ++(180:2) node[circle,draw=black, fill=black,scale=0.5](21) {};
   % \path (9) ++(195:2) node[circle,draw=black, fill=black,scale=0.5](22) {};
   % \path (10) ++(237:2) node[circle,draw=black, fill=black,scale=0.5](23) {};
   % \path (10) ++(252:2) node[circle,draw=black, fill=black,scale=0.5](24) {};
   % \path (10) ++(267:2) node[circle,draw=black, fill=black,scale=0.5](25) {};
   % \path (11) ++(309:2) node[circle,draw=black, fill=black,scale=0.5](26) {};
   % \path (11) ++(324:2) node[circle,draw=black, fill=black,scale=0.5](27) {};
   % \path (11) ++(339:2) node[circle,draw=black, fill=black,scale=0.5](28) {};

    \path [-, ultra thick ,draw, dotted, color=black] (1) edge node[left] {} (2);
    \path [-,ultra thin] (1) edge node[left] {} (3);
    \path [-,ultra thin] (1) edge node[left] {} (4);
    \path [-,ultra thin] (1) edge node[left] {} (5);
    \path [-,ultra thin] (1) edge node[left] {} (6);
    \path [-, ultra thick ,draw, dotted, color=black] (2) edge node[left] {} (7);
    \path [-,ultra thin] (3) edge node[left] {} (7);
    \path [-,ultra thin] (3) edge node[left] {} (8);
    \path [-, ultra thick ,draw, dotted, color=black] (4) edge node[left] {} (8);
    \path [-, ultra thick ,draw, dotted, color=black] (4) edge node[left] {} (9);
    \path [-,ultra thin] (5) edge node[left] {} (9);
    \path [-,ultra thin] (5) edge node[left] {} (10);
     \path [-,ultra thin] (6) edge node[left] {} (10);
    \path [-,ultra thin] (6) edge node[left] {} (11);
     \path [-, ultra thick ,draw, dotted, color=black] (2) edge node[left] {} (11);
     
     \path [-, ultra thick ,draw, dotted, color=black] (6) edge node[left] {} (12);
     \path [-, thin,draw, dotted, color=black] (6) edge node[left] {} (13);

    \end{tikzpicture}
    \caption{A $5$-vertex $v$ adjacent to four $3$-vertices and one $5^-$-vertex, with $v$ incident to five $4$-faces}

    \label{fig:my_label}
\end{figure}

\begin{figure}[!htb]
    \centering
    \begin{tikzpicture}[scale=.7]
   [
    > = stealth, % arrow head style
    shorten > = 1pt, % don't touch arrow head to node
    auto,
    node distance = 2cm, % distance between nodes
    thick, dashed pattern=on % line style
    ]

    \tikzset{every state}=[
    draw = black,
    thick,
    fill = white,
    minimum size = 1mm
    ]
    \node[circle,draw=black, fill=black,scale=0.5, label = below:\small{$v$}](1) at (0,0){};
    \path (1) ++(0:2) node[circle,draw=black, fill=black,scale=0.5, label = right:\small{$u_1$}](2) {};
    \path (1) ++(72:2) node[circle,draw=black, fill=black,scale=0.5, label = above:\small{$u_2$}](3) {};
    \path (1) ++(144:2) node[circle,draw=black, fill=black,scale=0.5, label = above left:\small{$u_3$}](4) {};
    \path (1) ++(216:2) node[circle,draw=black, fill=black,scale=0.5, label = left:\small{$u_4$}](5) {};
    \path (1) ++(288:2) node[circle,draw=black, fill=black,scale=0.5, label = above right:\small{$u_5$}](6) {};
    \path (2) ++(72:2) node[circle,draw=black, fill=black,scale=0.5, label = below right:\small{$z_1$}](7) {};
    \path (3) ++(144:2) node[circle,draw=black, fill=black,scale=0.5, label = right:\small{$z_2$}](8) {};
    \path (4) ++(216:2) node[circle,draw=black, fill=black,scale=0.5, label = above:\small{$z_3$}](9) {};
    \path (5) ++(288:2) node[circle,draw=black, fill=black,scale=0.5, label = left:\small{$z_4$}](10) {};
    \path (6) ++(360:2) node[circle,draw=black, fill=black,scale=0.5, label = right:\small{$x$}](11) {};
   % \path (6) ++(275:2) node[circle,draw=black, fill=black,scale=0.5, label = below:\small{$x$}](12) {};
    \path (6) ++(300:2) node[circle,draw=black, fill=black,scale=0.5, label = below:\small{$y$}](13) {};
   % \path (7) ++(21:2) node[circle,draw=black, fill=black,scale=0.5](14) {};
   % \path (7) ++(36:2) node[circle,draw=black, fill=black,scale=0.5](15) {};
   % \path (7) ++(51:2) node[circle,draw=black, fill=black,scale=0.5](16) {};
   % \path (8) ++(93:2) node[circle,draw=black, fill=black,scale=0.5](17) {};
   % \path (8) ++(108:2) node[circle,draw=black, fill=black,scale=0.5](18) {};
   % \path (8) ++(123:2) node[circle,draw=black, fill=black,scale=0.5](19) {};
    %\path (9) ++(165:2) node[circle,draw=black, fill=black,scale=0.5](20) {};
   % \path (9) ++(180:2) node[circle,draw=black, fill=black,scale=0.5](21) {};
   % \path (9) ++(195:2) node[circle,draw=black, fill=black,scale=0.5](22) {};
   % \path (10) ++(237:2) node[circle,draw=black, fill=black,scale=0.5](23) {};
   % \path (10) ++(252:2) node[circle,draw=black, fill=black,scale=0.5](24) {};
   % \path (10) ++(267:2) node[circle,draw=black, fill=black,scale=0.5](25) {};
   % \path (11) ++(309:2) node[circle,draw=black, fill=black,scale=0.5](26) {};
   % \path (11) ++(324:2) node[circle,draw=black, fill=black,scale=0.5](27) {};
   % \path (11) ++(339:2) node[circle,draw=black, fill=black,scale=0.5](28) {};

    \path [-, ultra thick ,draw, dotted, color=black] (1) edge node[left] {} (2);
    \path [-,ultra thin] (1) edge node[left] {} (3);
    \path [-,ultra thin] (1) edge node[left] {} (4);
    \path [-,ultra thin] (1) edge node[left] {} (5);
    \path [-,ultra thin] (1) edge node[left] {} (6);
   
    \path [-,ultra thin] (3) edge node[left] {} (7);
    \path [-, ultra thick ,draw, dotted, color=black] (3) edge node[left] {} (8);
    \path [-,ultra thin] (4) edge node[left] {} (8);
  
    \path [-,ultra thin] (5) edge node[left] {} (9);
    \path [-, ultra thick ,draw, dotted, color=black] (5) edge node[left] {} (10);
    \path [-,ultra thin] (6) edge node[left] {} (10); 
     \path [-, ultra thick ,draw, dotted, color=black] (2) edge node[left] {} (11);
     \path [-,ultra thin] (6) edge node[left] {} (13);

    \path [-, ultra thick ,draw, dotted, color=black] (2) edge node[left] {} (7);
    
    \path [-, ultra thick ,draw, dotted, color=black] (4) edge node[left] {} (9);

    % \path [-, ultra thick ,draw, dotted, color=black] (6) edge node[left] {} (12);
     %\path [-, thin,draw, dotted, color=black] (6) edge node[left] {} (13);
    % \path [-, thin,draw, dotted, color=black] (7) edge node[left] {} (14);
   %  \path [-, thin,draw, dotted, color=black] (7) edge node[left] {} (15);
   %  \path [-, thin,draw, dotted, color=black] (7) edge node[left] {} (16);
   %  \path [-,thin, draw, dotted,color=black] (8) edge node[left] {} (17);
   %  \path [-, thin,draw, dotted, color=black] (8) edge node[left] {} (18);
   %  \path [-, thin,draw, dotted, color=black] (8) edge node[left] {} (19);
   %  \path [-, thin,draw, dotted, color=black] (9) edge node[left] {} (20);
   %  \path [-, thin,draw, dotted, color=black] (9) edge node[left] {} (21);
   %  \path [-, thin,draw, dotted, color=black] (9) edge node[left] {} (22);
    % \path [-, thin,draw, dotted, color=black] (10) edge node[left] {} (23);
    % \path [-, thin,draw, dotted, color=black] (10) edge node[left] {} (24);
    % \path [-, thin,draw, dotted, color=black] (10) edge node[left] {} (25);
    % \path [-, thin,draw, dotted, color=black] (11) edge node[left] {} (26);
    % \path [-, thin,draw, dotted, color=black] (11) edge node[left] {} (27);
   % \path [-, thin,draw, dotted, color=black] (11) edge node[left] {} (28);

     \path [-, thin,draw, dotted, color=black] (11) edge node[left] {} (13);

    \end{tikzpicture}
    \caption{A $5$-vertex $v$ adjacent to five $3$-vertices with $v$ incident to four $4$-faces and one $5^+$-face (dashed link between $x,y$ is a path of length at least one)}
    \label{fig:my_label5cycle}
\end{figure}

\section{Graphs with $\delta(G)\geq 2$ and $g(G)\geq 7$}
\label{section4}
In \cite{cranston2017introduction}, Cranston and West prove the following lemma.
\begin{lemma}
    Every planar graph $G$ with girth at least $7$ and $\delta(G)\geq 2$ has a $2$-vertex with a $3^-$-neighbor.
\end{lemma}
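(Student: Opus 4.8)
The plan is to argue by contradiction using the balanced charging identity from Proposition~\ref{dischargingequations}. Suppose $G$ is a counterexample: a planar graph with $g(G)\ge 7$ and $\delta(G)\ge 2$ in which every $2$-vertex has both neighbors of degree at least $4$. Since $\delta(G)\ge 2$ forbids leaves, $G$ is not a forest, so it contains a cycle and (working componentwise, and after the usual reduction so that each face boundary is a cycle) we may apply $\sum_{v}(d(v)-4)+\sum_{f}(\ell(f)-4)=-8$ to a plane embedding. I would assign each vertex $v$ the charge $d(v)-4$ and each face $f$ the charge $\ell(f)-4$; then only $2$-vertices (charge $-2$) and $3$-vertices (charge $-1$) start negative, while every face starts with $\ell(f)-4\ge 3$.

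First I would record the two consequences of the hypothesis that drive the whole argument: no two $2$-vertices are adjacent, and no $2$-vertex is adjacent to a $3$-vertex. Hence on any face boundary every $2$-vertex is flanked by two vertices of degree at least $4$. The discharging rules would be purely face-to-vertex: (R1) every $2$-vertex takes $1$ from each incident face, and (R2) every $3$-vertex takes $\tfrac13$ from each incident face. After discharging, each $2$-vertex ends with $-2+2(1)=0$ and each $3$-vertex with $-1+3(\tfrac13)=0$, while vertices of degree at least $4$ are never touched and keep their nonnegative charge. Since the total charge is preserved and equals $-8$, it then suffices to show every face also ends nonnegative, yielding the contradiction $-8\ge 0$.

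The heart of the proof is therefore the bound that each face $f$ gives away at most $\ell(f)-4$. Writing $L(f)$ for the total charge leaving $f$, an incident $2$-vertex costs $1$, an incident $3$-vertex costs $\tfrac13$, and a $4^{+}$-vertex costs $0$. I would estimate $L(f)$ by summing the endpoint costs over the $\ell(f)$ boundary edges: every edge has endpoint-cost-sum at most $1$, since the only adjacent pairs exceeding $1$ are $(2,2)$ and $(2,3)$, which are exactly the adjacencies ruled out above, while $(2,4^{+})$, $(3,3)$, $(3,4^{+})$, and $(4^{+},4^{+})$ all sum to at most $1$. This double-counts each vertex cost, so $2L(f)\le \ell(f)$, giving $L(f)\le \ell(f)/2\le \ell(f)-4$ whenever $\ell(f)\ge 8$. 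The only remaining case is $\ell(f)=7$, which I would settle by a short direct inspection: the densest admissible pattern places three pairwise-separated $2$-vertices, which forces all four remaining boundary positions to be $4^{+}$-vertices and leaves no room for any $3$-vertex, so $L(f)\le 3=\ell(f)-4$; every configuration with fewer $2$-vertices gives strictly less (two $2$-vertices plus at most two admissible $3$-vertices yields $L(f)\le \tfrac83$).

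The main obstacle is precisely this girth-$7$ boundary case, where $L(f)\le \ell(f)-4$ is tight and the clean edge-counting bound $L(f)\le\ell(f)/2$ is not by itself sufficient; the argument closes only because, on a $7$-cycle, the separation-and-buffering constraints prevent a face saturated with $2$-vertices from also carrying a $3$-vertex. A secondary technical point is the reduction to face boundaries that are genuine cycles: to handle bridges and cut vertices one either preprocesses $G$ to be $2$-connected or re-runs the edge-sum estimate along the closed boundary walk. I expect this to be routine but worth stating explicitly, since the entire numerical balance hinges on counting each low-degree vertex with the correct multiplicity.
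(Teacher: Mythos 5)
Your proof is correct, but it is worth noting that the paper does not actually prove this lemma: it is quoted from Cranston and West \cite{cranston2017introduction} and used as a black box to deduce Corollary \ref{IBNgirth7}. The nearest argument in the paper is the proof of Theorem \ref{confgirth7}, which uses face charging (initial charges $2d(v)-6$ on vertices and $\ell(f)-6$ on faces) together with a vertex-to-vertex rule, and which yields a different conclusion --- a $(2,3)$-edge or a $4^+$-vertex with $d(v)-1$ $2$-neighbors and one $3^-$-neighbor --- that does not by itself imply the lemma. Your balanced-charging argument is therefore a genuinely different, self-contained route to the cited statement, and it checks out: the edge-sum bound $2L(f)\le\ell(f)$ works precisely because the forbidden adjacencies $(2,2)$ and $(2,3)$ are exactly the pairs whose costs exceed $1$, and this disposes of every face of length at least $8$; your separate inspection of the $7$-face is also right, since any independent set of three $2$-vertices on a $7$-cycle leaves four positions each adjacent to one of them, forcing all four to be $4^+$-vertices and leaving no room for a $3$-vertex. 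Two small reassurances on the technical points you flag: a face of length exactly $7$ automatically bounds a genuine $7$-cycle when $g(G)\ge 7$ (a non-cyclic closed walk of length $7$ splits at a repeated vertex into closed walks of lengths $2+5$ or $3+4$, and the odd piece of length at most $5$ contains an odd cycle shorter than the girth), while for longer faces your edge-sum estimate goes through verbatim along the closed boundary walk with vertices counted by their corners on $f$; and for disconnected graphs the right-hand side of the balanced-charging identity only becomes more negative, so the componentwise reduction is harmless. What your approach buys is a direct proof of the exact structural fact the paper needs for its $b_i(G)\le 4$ bound, rather than the weaker pair of configurations in Theorem \ref{confgirth7}.
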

From this lemma, we obtain the following corollary.
\begin{corollary}
\label{IBNgirth7}
A planar graph $G$ with $\delta(G)\geq 2$ and $ g(G)\geq 7$ has $b_i(G)\leq 4$.
\end{corollary}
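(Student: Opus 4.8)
The plan is to combine the preceding lemma of Cranston and West with Theorem \ref{PriddyWei}; no discharging is needed here. First I would invoke that lemma, which applies since $G$ is planar with $g(G) \geq 7$ and $\delta(G) \geq 2$, to obtain a $2$-vertex $v$ adjacent to a $3^-$-vertex $u$. This produces a single edge $uv \in E(G)$ with $d(v) = 2$ and $d(u) \leq 3$, which is the only edge I will need.

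Next I would apply Theorem \ref{PriddyWei} directly to this edge. The theorem bounds $b_i(G)$ by the minimum over all edges of $d(x) + d(y) - |N(x) \cap N(y)| - 1$, so in particular
$$b_i(G) \leq d(u) + d(v) - |N(u) \cap N(v)| - 1.$$
Substituting $d(v) = 2$ and $d(u) \leq 3$ gives $b_i(G) \leq 4 - |N(u) \cap N(v)|$, so it remains only to control the common-neighborhood term.

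The one substantive observation, and the only place the girth hypothesis enters beyond the lemma itself, is that $N(u) \cap N(v) = \emptyset$. Indeed, any common neighbor $w \in N(u) \cap N(v)$ would, together with the edge $uv$, close a triangle $uvw$, i.e.\ a $3$-cycle; but $g(G) \geq 7 > 3$ forbids any cycle of length $3$. Hence $|N(u) \cap N(v)| = 0$ and the bound collapses to $b_i(G) \leq 4$. Since every step is a direct substitution, I expect no genuine obstacle; the only point that requires a moment's care is verifying that the common-neighborhood term vanishes, and that follows immediately from the girth constraint.
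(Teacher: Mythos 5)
Your proof is correct and is essentially the argument the paper intends: apply the Cranston--West lemma to get a $(2,3^-)$-edge $uv$ and feed it into Theorem \ref{PriddyWei} to get $b_i(G)\leq 2+3-|N(u)\cap N(v)|-1\leq 4$. The only (harmless) overstatement is that the common-neighborhood term needs to be ``controlled'': since it is subtracted and is always nonnegative, the bound $b_i(G)\leq 4$ follows even without invoking triangle-freeness, though your girth observation is of course valid.
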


Although the following result is not required for the proof of the independent bondage number (since we used a different lemma from another source), we include it here as it yields some interesting configurations.

\begin{theorem}
\label{confgirth7}
Every connected planar graph $G$ with $\delta(G)\geq2$ and $g(G)\geq 7$ contains at least one of the following configurations:
\begin{itemize}
    \item[(a)] a $(2,3)$-edge.
    \item[(b)] a $4^+$-vertex $v$ with $[d(v)-1]$ $2$-neighbors and one $3^-$-neighbor.
\end{itemize}
\end{theorem}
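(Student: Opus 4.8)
The plan is to argue by contradiction with the discharging method, in the spirit of Theorem~\ref{mainthm}. Suppose $G$ is connected, planar, with $\delta(G)\ge 2$, $g(G)\ge 7$, and containing neither configuration. Of the three schemes in Proposition~\ref{dischargingequations} I would use \emph{face charging}: assign each vertex $v$ the charge $2d(v)-6$ and each face $f$ the charge $\ell(f)-6$, so the total charge equals $-12$. Because $g(G)\ge 7$ forces $\ell(f)\ge 7$, every face starts with charge at least $1$, and among the vertices only the $2$-vertices are negative (charge $-2$), since a $3$-vertex starts at exactly $2\cdot 3-6=0$. It is this last feature that makes face charging the natural choice here: the whole argument reduces to repairing the deficit of the $2$-vertices.

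First I would extract the structural consequence of forbidding configuration (a): no $2$-vertex is adjacent to a vertex of degree at most $3$, so every $2$-vertex has both neighbours in $S_{4^+}$. Two facts follow at once. The neighbours of a $2$-vertex always carry positive spare charge, and on any facial walk the $2$-vertices are pairwise non-adjacent, so a face of length $\ell$ is incident with at most $\lfloor \ell/2\rfloor$ of them.

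Next I would impose exactly two rules: (R1) every face sends $\tfrac13$ to each incident $2$-vertex, and (R2) every $4^+$-vertex sends $\tfrac23$ to each adjacent $2$-vertex. Then each $2$-vertex ends with $-2+2\cdot\tfrac13+2\cdot\tfrac23=0$. A face $f$ loses at most $\tfrac13\lfloor \ell(f)/2\rfloor$, and one checks $\ell-6\ge \tfrac13\lfloor \ell/2\rfloor$ for all $\ell\ge 7$ (with equality only at $\ell=7$ carrying three $2$-vertices), so every face stays non-negative. The interesting vertices are the $4^+$-vertices: one of degree $d$ with $t$ many $2$-neighbours finishes with $(2d-6)-\tfrac23 t$, which is non-negative precisely when $t\le 3d-9$. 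For $d\ge 5$ this is automatic since $t\le d\le 3d-9$, so the burden falls entirely on the $4$-vertices, where it demands $t\le 3$.

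The hard part is therefore the degree-$4$ vertices, which carry the least surplus: a $4$-vertex all of whose neighbours are $2$-vertices would finish at $2-\tfrac83<0$. This is exactly the role of configuration (b): such a vertex has three $2$-neighbours together with a fourth neighbour of degree at most $3$ (here a $2$-vertex), so it is an instance of (b), and forbidding (b) forces $t\le 3$ at every $4$-vertex. Hence all vertices and all faces end non-negative, giving $0\le \sum_v(2d(v)-6)+\sum_f(\ell(f)-6)=-12$, a contradiction, so $G$ must contain (a) or (b). The only delicate points to verify carefully are the tightness of the facial inequality at $7$-faces and the identification of the unique failing $4$-vertex pattern with configuration (b); every higher-degree vertex and every longer face has comfortable slack.
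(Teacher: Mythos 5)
Your proposal is correct and follows essentially the same route as the paper: face charging with charges $2d(v)-6$ and $\ell(f)-6$, the observation that forbidding (a) forces every $2$-vertex to have two $4^+$-neighbours and every face to have at most $\lfloor\ell(f)/2\rfloor$ incident $2$-vertices, and forbidding (b) to cap the number of $2$-neighbours of a $4^+$-vertex. The only cosmetic difference is that you use the flat rates $\tfrac23$ and $\tfrac13$ (the worst-case values at $d=4$ and $\ell=7$) where the paper lets each vertex and face split its entire initial charge proportionally; both verifications go through identically.
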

\begin{proof}
Let $G$ be a planar graph with $\delta(G)\geq 2$ and $g(G)\geq 7$. Suppose $G$ contains none of the listed configurations. We will proceed with face charging and assign an initial charge of $2d(v)-6$ to each vertex $v$ and $\ell(f)-6$ to each face $f$. We redistribute the charge according to the following discharging rule.
\begin{itemize}
    \item[(R1)] Every $2$-vertex takes $\frac{2d(u)-6}{d(u)-1}$ from each neighbor $u$ and $\frac{\ell(f)-6}{\big\lfloor\frac{\ell(f)}{2}\big\rfloor}$ from each incident face $f$.
\end{itemize}
We begin by showing all vertices end with a nonnegative final charge. We note that if $2$-vertex is incident to only one distinct face, then the $2$-vertex takes charge twice from its incident face. However, since the vertex is then counted twice in the length of the face, the final charges does not change in our discharging rule. Hence, we perform our discharging rule as if each vertex has distinct faces. 

Let $v$ be a $j$-vertex. For $j=2$,  $v$ must have only $4^+$-neighbors by configuration $(a)$. Furthermore by $(R1)$, $v$ takes charge at least $\frac{2(4)-6}{4-1}=\frac{2}{3}$ from each $4^+$-neighbor. As $g(G)\geq 7$, $v$ takes charge at least $\frac{7-6}{\big\lfloor \frac{7}{2}\big\rfloor}=\frac{1}{3}$ from each of its incident faces. Thus, $v$ ends with a final charge of at least $[2(2)-6]+2\left(\frac{2}{3}\right)+2\left(\frac{1}{3}\right)=0$.

For $j=3$, $v$ has initial charge $2(3)-6=0$. As $v$ does not have any $2$-neighbors by configuration $(a)$, then $v$ does not lose any charge to its neighbors. It follows that $v$ begins and ends with a nonnegative final charge. 

For $j=4$, $v$ has at most $[d(v)-1]$ $2$-neighbors by configuration $(b)$. By $(R1)$, $v$ distributes its initial positive charge equally among its $2$-neighbors. Thus, $v$ does not lose too much charge. It follows that all vertices end with a non-negative charge.

We now consider the faces of $G$. By configuration $(a)$, every face $f$ must be incident to at most $\big\lfloor\frac{l(f)}{2}\big\rfloor$ $2$-vertices. By $(R1)$, each face distributes its initial positive charge equally among its incident $2$-vertices. Thus, every face does not lose too much charge and has a non-negative final charge. Hence, all faces end with a non-negative charge. 
\end{proof}

\section{Graphs with $\delta(G)\geq 2$ and $g(G)\geq 10$}
\label{section5}
\begin{theorem}
    \label{girth10discharging}
     Let $G$ be a planar graph with $\delta(G)\geq 2$ and $g(G)\geq 10$. Then  $G$ has at least one of the following configurations.
\begin{itemize}
    \item[(a)] a $(2,2)$-edge;
    \item[(b)] a vertex $v\in S(3^+, d(v))$.
\end{itemize}   
\end{theorem}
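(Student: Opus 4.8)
The plan is to avoid discharging altogether and instead derive a contradiction from two opposing edge counts, assuming $G$ contains neither configuration. We may assume $G$ is connected, since otherwise we argue inside a single component, which still has $\delta\geq 2$ and girth at least $10$. Because $\delta(G)\geq 2$, the graph $G$ contains a cycle, so its girth is finite and at least $10$, and the standard edge bound for simple planar graphs of girth at least $g$ (obtained from $f\leq \frac{2m}{g}$ and Euler's formula) gives $m\leq \frac{g}{g-2}(n-2)$, that is $m\leq \frac{5}{4}(n-2)$, where $m=|E(G)|$ and $n=|V(G)|$. Writing $n=|S_2|+|S_{3^+}|$ (every vertex has degree at least $2$), I would play this one inequality against two separate lower bounds on $m$, one supplied by each forbidden configuration.

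First I would record what the two assumptions buy. The absence of configuration $(a)$ means no two $2$-vertices are adjacent, so every $2$-vertex sends both of its edges into $S_{3^+}$; hence there are no $(2,2)$-edges and exactly $2|S_2|$ edges joining $S_2$ to $S_{3^+}$. The absence of configuration $(b)$ means every $3^+$-vertex has a neighbor of degree at least $3$, so the subgraph induced on $S_{3^+}$ has minimum degree at least $1$; summing degrees there shows it has at least $\tfrac12|S_{3^+}|$ edges. Partitioning $E(G)$ into edges inside $S_{3^+}$, edges between $S_2$ and $S_{3^+}$, and edges inside $S_2$ (none), we obtain $m=m_{33}+2|S_2|$ with $m_{33}\geq \tfrac12|S_{3^+}|$, where $m_{33}$ is the number of edges inside $S_{3^+}$.

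The two key steps then combine these facts with the planar bound. Feeding the degree-sum estimate $2m=\sum_v d(v)\geq 2|S_2|+3|S_{3^+}|$, i.e. $m\geq |S_2|+\tfrac32|S_{3^+}|$, into $m\leq \tfrac54(n-2)$ and simplifying yields $|S_2|\geq |S_{3^+}|+10$. Feeding instead the edge partition $m=m_{33}+2|S_2|\geq \tfrac12|S_{3^+}|+2|S_2|$ into the same bound and simplifying yields $|S_2|\leq |S_{3^+}|-\tfrac{10}{3}$. These two inequalities are incompatible, which is the desired contradiction, so $G$ must contain configuration $(a)$ or $(b)$.

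The algebra is routine, so the only real content is the bookkeeping: recognizing that condition $(a)$ pins the number of mixed edges to exactly $2|S_2|$, while condition $(b)$ forces at least $\tfrac12|S_{3^+}|$ edges among the high-degree vertices, and that these pull the ratio of $2$-vertices to $3^+$-vertices in opposite directions. The main point to watch is the legitimacy of the planar edge bound, namely that $G$ genuinely contains a cycle (guaranteed by $\delta\geq 2$) and that one has reduced to the connected case so that $m\leq\tfrac54(n-2)$ applies. If one prefers to match the paper's style, the same argument can be repackaged as a discharging proof using the balanced charge $d(v)-4$ on vertices and $\ell(f)-4$ on faces, but the direct count above is shorter and makes the role of each forbidden configuration transparent.
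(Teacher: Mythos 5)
Your argument is correct, and it takes a genuinely different route from the paper. The paper proves this theorem by face charging (initial charges $2d(v)-6$ and $\ell(f)-6$) with two discharging rules, and the bulk of the work there is a local analysis of how much charge each face can lose, via $3$-paths along face boundaries and a worst-case count of incident $2$-vertices. You instead translate the negations of the two configurations directly into global edge counts: no $(2,2)$-edge pins the number of $S_2$--$S_{3^+}$ edges at exactly $2|S_2|$ with no edges inside $S_2$, and the absence of a vertex all of whose neighbors are $2$-vertices forces $G[S_{3^+}]$ to have minimum degree at least $1$, hence at least $\tfrac12|S_{3^+}|$ internal edges. Playing these against the girth-$10$ planar bound $m\leq\tfrac54(n-2)$ yields $|S_2|\leq|S_{3^+}|-\tfrac{10}{3}$, while the plain degree-sum bound $2m\geq 2|S_2|+3|S_{3^+}|$ with the same planar bound yields $|S_2|\geq|S_{3^+}|+10$ (this first inequality uses only $\delta\geq2$, planarity, and girth, not the forbidden configurations), and the two are incompatible. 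I checked the arithmetic and it is right; you also correctly handle the two points where such arguments usually go wrong, namely reducing to a connected component and noting that $\delta\geq2$ guarantees a cycle so that the girth-based edge bound applies. What each approach buys: yours is shorter, makes visible exactly where each hypothesis enters, and avoids the face-boundary casework entirely; the paper's discharging formulation is local and matches the machinery used in the harder sections of the paper, where the configuration lists are too long for a clean global count. Either proof would serve here.
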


\begin{proof}
Let $G$ be a planar graph with $\delta(G)\geq 2$ and $g(G)\geq 10$. Suppose $G$ contains neither of the listed configurations. We will proceed by face charging and assign an initial charge of $2d(v)-6$ to each vertex $v$ and $\ell(f)-6$ to each face $f$. We redistribute the charge according to the following discharging rules.
\begin{itemize}
    \item[(R1)] Every $2$-vertex $v$ takes $\frac{1}{5}$ from each neighbor and $\frac{8}{5k}$ from its distinct $k$ incident faces.
    \item[(R2)] Every vertex $v\in S(3,t)$ where $0\leq t \leq d(v)-1$ takes $\frac{t}{5k}$ from each of its $k$ distinct incident faces that contain a $3^+$-neighbor of $v$.
\end{itemize} 
We begin by showing all vertices end with a nonnegative final charge. Let $v$ be a $j$-vertex. For $j=2$, $v$ must be adjacent to only $3^+$-vertices by configuration $(a)$. Furthermore by our discharging rules, $2$-vertices do not give any charge. By rule $(R1)$, $v$ take $\frac{1}{5}$ from each neighbor and $\frac{8}{5k}$ from its distinct $k$ incident faces. Hence, $v$ ends with final charge at least $(2(2)-6)+2\left(\frac{1}{5}\right)+\frac{8}{5}=0.$

For $j=3$, $v$ has an initial charge of $2(3)-6=0$. By $(R1)$, $v$ loses charge only to its $2$-neighbors. For $v\in S(3,t)$, we have that $0\leq t \leq 2$ by configuration $(b)$. It follows that $v$ loses a total charge of $t\left(\frac{1}{5}\right)=\frac{t}{5}$ to its $2$-neighbors. As $t=|N(v)\cap S_2|\leq 2$, then $v$ must always be incident to at least one face containing a $(3,3^+)$-edge. Thus, by $(R2)$, $v$ gains a total charge of $\left(\frac{t}{5k}\right)k=\frac{t}{5}$ from its $k$ distinct incident faces  that also contain a $3^+$-neighbor of $v$. Thus, $v$ begins and ends with a final charge of $0$. Hence, all $3$-vertices end with a nonnegative final charge.

For $j\geq 4$, $v$ loses charge only its $2$-neighbors. By configuration $(b)$, $|N(v)\cap S_2|\leq j-1$. As $v$ has at most $j-1$ $2$-neighbors, then $v$ ends with a final charge of at least $(2j-6)-(j-1)\left(\frac{1}{5}\right)=\frac{9j-29}5\geq 0$ when $j\geq 4$. Thus, all vertices end with a nonnegative charge.

We now consider the faces of $G$. Let $f$ be an $\ell$-face for $\ell\geq 10$. If a $2$-vertex $v$ has only one distinct face, then we say that $v$ takes a charge of $\frac{4}{5}$ twice from $f$. Similarly, we do the same for degree-seeking $3$-vertices. As the charge taken is the same, we may assume that $f$ has $\ell(f)$ distinct incident vertices. By our discharging rules, $f$ loses $\frac{4 }{5}$ for each vertex in $V(f)\cap S_2$, $\frac{1}{15}$ for each vertex in $V(f)\cap S(3,1)$ with $3^+$-neighbor incident to $f$, and $\frac{2}{15}$ for each vertex in $V(f)\cap S(3,2)$ with $3^+$-neighbor incident to $f$. Thus, $f$ loses charge equal to $\frac{4}{5}|V(f)\cap S_2|+\sum_{v\in V(f)\cap S(3,t),t\in{1,2}}\frac{t}{15}$. 

Let $(u_1,u_2,u_3)$ be a $3$-path along $f$ such that $d(u_2)\geq 3$. If $d(u_3)\geq 3$, then $u_2$ and $u_3$ take charge at most $2\left(\frac{2}{15}\right)=\frac{4}{15}$ from $f$. If $d(u_3)=2$, then $u_2$ and $u_3$ take charge at most $\frac{2}{15}+\frac{4}{5}=\frac{14}{15}>\frac{4}{15}$ from $f$. Hence, the vertices of a $(3^+,2)$-edge will always take more charge from $f$ than the vertices of a $(3^+,3^+)$-edge. As our discharging rules depend only on the neighbors and incident faces of each vertex, then it suffices to only consider the case such that each $3^+$-vertex is followed by a $2$-vertex along $f$. 

If $d(u_1)=d(u_3)=2$, then $u_1$, $u_2$, and $u_3$ take a charge of $2\left(\frac{4}{5}\right)=\frac{8}{5}$. If $d(u_1)\geq 3$ and $d(u_3)=2$, then $u_1$, $u_2$, and $u_3$ take a charge of at most $\frac{4}{5}+2\left(\frac{2}{15}\right)=\frac{16}{15}<\frac{8}{5}$. Hence, any configuration of the vertices of $f$ that does not maximize its number of incident $2$-vertices will always take strictly less charge in comparison to the configuration with the maximum number of incident $2$-vertices. 

Therefore, it suffices to consider only the case where $f$ contains the maximum $\lfloor\frac{\ell}{2}\rfloor$ $2$-vertices. If $\ell(f)$ is even, then the only degree-seeking vertices of $f$ are $2$-vertices. Thus, $f$ ends with final charge at least $(\ell(f)-6)-\lfloor\frac{\ell}{2}\rfloor\left( \frac{4}{5}\right)\geq 0$ when $\ell(f)\geq 10$. If $\ell(f)$ is odd, then $f$ contains at most two degree-seeking vertices that belong to $S(3,1)\cup S(3,2)$ (rule $(R2)$). It follows that $f$ ends with final charge at least $(\ell(f)-6)-\lfloor\frac{\ell}{2}\rfloor\left( \frac{4}{5}\right)-2\left(\frac{2}{15}\right)\geq 0$ when $\ell(f)\geq 10$. 
\end{proof}

\begin{theorem}
\label{confgirth10}
A planar graph $G$ with $\delta(G)\geq 2$ and $g(G)\geq 10$ has $b_i(G)\leq3$.
\end{theorem}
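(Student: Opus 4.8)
The plan is to use Theorem \ref{girth10discharging}, which tells us that a planar $G$ with $\delta(G)\ge 2$ and $g(G)\ge 10$ must contain a $(2,2)$-edge [configuration (a)] or a vertex $v\in S(3^+,d(v))$ all of whose neighbours are $2$-vertices [configuration (b)]. For each I would exhibit an edge set $B$ with $|B|\le 3$ satisfying $\gamma_i(G-B)>\gamma_i(G)$, arguing exactly as in Theorem \ref{girth5configurations}: write $I,I'$ for minimum independent dominating sets of $G$ and $G'=G-B$, assume for contradiction that $|I'|\le |I|$, and manufacture an independent dominating set of $G$ of size strictly below $|I|$. Configuration (a) is immediate from Theorem \ref{PriddyWei}: for a $(2,2)$-edge $xy$ the stated bound is $d(x)+d(y)-|N(x)\cap N(y)|-1$, and since $g(G)\ge 10$ the vertices $x,y$ have no common neighbour (one would close a triangle), so $b_i(G)\le 2+2-0-1=3$.

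Configuration (b) is the substantive case. Write $N(v)=\{v_1,\dots,v_d\}$ with $d=d(v)\ge 3$, each $d(v_i)=2$, and let $v_i'$ be the second neighbour of $v_i$. The key structural input from $g(G)\ge 10$ is that the $v_i'$ are pairwise distinct, distinct from each $v_j$ and from $v$, and pairwise non-adjacent; any coincidence or adjacency among them, or an edge from some $v_i'$ to $v$ or to $v_j$, would close a cycle of length at most $5$. I would take $B=\{vv_1,\ v_1v_1',\ v_2v_2'\}$, three distinct edges, so that in $G'$ the vertex $v_1$ is isolated while $v_2$ is a leaf attached to $v$.

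The forcing step pins $I'$ down near $v$. Being isolated, $v_1\in I'$. If $v\in I'$ then $v_2\notin I'$ (the edge $vv_2$ survives in $G'$), whence $I'\setminus\{v_1\}$ is already an independent dominating set of $G$ ($v_1$ is dominated there by $v$, and no deleted edge has both ends in the set), contradicting minimality; thus $v\notin I'$, and then the leaf $v_2$ must lie in $I'$. So $v_1,v_2\in I'$ while $v\notin I'$. Now set $T=N(v)\cap I'\supseteq\{v_1,v_2\}$ and form $J=(I'\setminus T)\cup\{v\}$: adding $v$ dominates all of $v_1,\dots,v_d$ and is independent since every neighbour of $v$ has been deleted from the set. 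Because the $v_i'$ are mutually far apart, deleting a vertex $v_i\in T$ can only strip domination from $v_i'$ itself. For $i\in\{1,2\}$ the vertex $v_i'$ keeps a dominator inside $J$ (its dominator cannot be a neighbour of $v$, else two outer vertices coincide, and for $i=2$ the edge $v_2v_2'$ was deleted so $v_2$ was not its private dominator in $G'$). For each $i\ge 3$ with $v_i\in I'$ whose $v_i'$ was privately dominated by $v_i$ in $G'$, I re-add $v_i'$; girth keeps the enlarged family independent. Counting, this yields an independent dominating set of $G$ of size at most $|I'|-|T|+1+(|T|-2)=|I'|-1<|I|$, the desired contradiction, so $b_i(G)\le 3$.

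The main obstacle I anticipate is precisely the bookkeeping in configuration (b): verifying that removing each neighbour $v_i\in T$ has a purely local effect (only $v_i'$ can lose its dominator), that the at most $|T|-2$ re-added outer vertices keep $J$ both independent and dominating, and that $v_1',v_2'$ never need re-adding. Each of these hinges on the $g(G)\ge 10$ geometry — the distinctness and mutual non-adjacency of the $v_i'$, and the non-adjacency of each $v_i'$ to $v$ — so the delicate part is tracking which girth-forced non-adjacency is invoked at every deletion and re-addition, in the same spirit as, though far lighter than, the $g(G)\ge 5$ analysis in Theorem \ref{girth5configurations}.
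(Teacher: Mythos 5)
Your proposal is correct and follows essentially the same route as the paper: configuration (a) is dispatched by Theorem \ref{PriddyWei}, and for configuration (b) you delete exactly the same three edges (both edges at one $2$-neighbor and the outer edge at a second), force $v_1,v_2\in I'$ and $v\notin I'$, and then swap $N(v)\cap I'$ for $v$ while re-adding the privately dominated outer vertices $v_i'$. The girth-based distinctness and non-adjacency observations you make explicit are implicit in the paper's bookkeeping, but the argument and the count $|I'|-1<|I|$ are the same.
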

\begin{proof}
Let $G$ be a planar graph with minimum degree $2$ and girth at least $10$. It follows that $G$ contains at least one of the listed configurations listed in Theorem \ref{girth10discharging}. If $G$ contains Configuration $(a)$, then $b_i(G)\leq 2+2-1=3$ by Theorem \ref{PriddyWei}. Hence, we may assume $G$ contains only configuration $(b)$. Let $v\in S(3^+, d(v))$ with $N(v)=\{u_1,u_2,\ldots, u_{d(v)}\}$. Consider the edge set $B=\{u_1x : x\in N(u_1)\}\cup \{u_2x: x\in N(u_2)\backslash\{v\}\}$.

Then $|B|=3$. Let $I$ and $I'$ be minimum independent dominating sets of $G$ and $G'=G-B$ respectively. We claim $|I|<|I'|$. Suppose not, that is $|I|\geq |I'|$. It is clear that $u_1\in I'$. It follows that $v\notin I'$. Otherwise, $I'\backslash\{u_1\}$ is an independent dominating set of $G$ with size $|I'|-1<|I'|\leq |I|$, a contradiction as $I$ is a minimum independent dominating set of $G$. Furthermore, it must be that $u_2\in I'$ since $u_2$ has no other neighbors besides $v$ in $G'$.

As $d(u_t)=2$ for $1\leq t \leq d(v)$, then either $u_t\in I'$ or there exists a $u_t'\in [N(u_t)\setminus \{v\}]\cap I'$ since $v\not\in I'$. If $u_t'\in I'$ for all $3\leq t\leq d(v)$, then $I'\backslash \{u_1,u_2\}\cup\{v\}$ is an independent dominating set of $G$ of size $|I'|-2+1<|I'|\leq |I|$, a contradiction. Thus, we may assume that at least one $u_t\in I'$. 

Fix $3\leq i\leq d(v)$ such that $u_i\in I'$. Either $|N_{G'}[u_i']\cap I'|=1$ or $|N_{G'}[u_i']\cap I'|\geq 2$. If $|N_{G'}[u_i']\cap I'|=1$, then $N_{G'}(u_i')\cap I'=\{u_i\}$. It follows that $[I'\backslash\{u_i\}]\cup\{u_i'\}$ is a minimum independent dominating set of $G'$. Thus, if $|N_{G'}[u_t']\cap I'|=1$ for some $t$, then we may assume $u_t'\in I'$.

Let $U$ be the set of all $u_t$ such that $|N_{G'}[u_t']\cap I'|\geq 2$. Then for each $u_t\in U$, there exists another vertex that dominates $u_t'$. Hence, $I'\backslash (U\cup\{u_1, u_2\})\cup \{v\}$ is an independent dominating set of $G$ with size $|I'|-|U|-2+1<|I'|\leq |I|$, a contradiction. Hence, it must be that $|I|<|I'|$. Therefore, $B$ is an independent bondage set and $b_i(G)\leq 3$.
\end{proof}

\section{Conclusion and Future Work}
\label{conclusionsection}
We now summarize our findings in the following theorem.
\begin{theorem}
For any connected planar graph $G$,
\[
b_i(G)\leq 
\begin{cases}
    6, & \text{if } g(G)\geq 4 \text{ and } \delta(G)\geq 3, \\
    5, & \text{if } g(G)\geq 5 \text{ and } \delta(G)\geq 2, \\
    4, & \text{if } g(G)\geq 7 \text{ and } \delta(G)\geq 2, \\
    3, & \text{if } g(G)\geq 10 \text{ and } \delta(G)\geq 2.
\end{cases}
\]
\end{theorem}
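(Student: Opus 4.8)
The plan is to recognize that this final theorem is purely a consolidation of the four constant upper bounds already proved in Sections~\ref{section2}--\ref{section5}, with each branch of the \texttt{cases} expression matching exactly one previously established result. First I would dispatch the branch $g(G)\geq 4$ and $\delta(G)\geq 3$ by citing Theorem~\ref{girth4mindegree3IBNproof}, which already yields $b_i(G)\leq 6$. For $g(G)\geq 5$ and $\delta(G)\geq 2$, the bound $b_i(G)\leq 5$ is precisely Theorem~\ref{girth5configurations}. For $g(G)\geq 7$ and $\delta(G)\geq 2$, the bound $b_i(G)\leq 4$ is Corollary~\ref{IBNgirth7}. Finally, for $g(G)\geq 10$ and $\delta(G)\geq 2$, the bound $b_i(G)\leq 3$ is Theorem~\ref{confgirth10}. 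Since each hypothesis--conclusion pair has been verified independently, assembling them establishes the piecewise statement.

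The only point meriting a remark is that the four hypotheses are not mutually exclusive: a graph with, say, $g(G)\geq 10$ and $\delta(G)\geq 3$ satisfies the antecedents of several branches at once. This causes no inconsistency, because the listed bounds are monotone---they weaken (the upper bound increases) as the girth requirement relaxes---so whenever more than one branch applies, all the corresponding bounds simultaneously hold and the tightest one is the relevant conclusion. I would note this briefly to justify reading the \texttt{cases} as a menu of independently valid implications rather than a partition of all planar graphs.

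There is essentially no obstacle remaining at this stage: the genuine difficulty of the paper lives upstream, in the discharging arguments that force the unavoidable sets of configurations (Theorems~\ref{mainthm}, \ref{girth4mindegree3configurations}, \ref{girth10discharging} and the girth-$7$ lemma) and in exhibiting, for each configuration, an explicit edge set of the prescribed cardinality whose deletion strictly raises $\gamma_i$. Once those are in hand, the proof of the summary theorem is immediate---a one-line appeal to the four results---so I would keep it to a short paragraph rather than reproducing any of the earlier casework.
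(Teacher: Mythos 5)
Your proposal matches the paper exactly: the summary theorem is proved by citing the four results already established (Theorem~\ref{girth4mindegree3IBNproof}, Theorem~\ref{girth5configurations}, Corollary~\ref{IBNgirth7}, and Theorem~\ref{confgirth10}), one per branch. Your remark about the overlapping hypotheses being read as independently valid implications is a sensible clarification but does not change the argument.
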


In this study, we established new upper bounds on the independent bondage number of planar graphs under various girth and minimum degree constraints. Our results extend earlier work on the bondage number of graphs and demonstrate that structural constraints of planar graphs can significantly restrict the independent bondage number. 

These findings provide the first systematic results on the independent bondage number under girth restrictions and open several directions for further research. One natural direction is to investigate whether the bounds obtained here are tight, either by constructing extremal examples or proving sharper results. Another promising avenue is to study the independent bondage number in other graph classes, such as toroidal graphs or graphs with bounded maximum degree.

\end{document}